\newtheorem{theorem}{Theorem}[section]
\newtheorem{lemma}[theorem]{Lemma}
\newtheorem{corollary}[theorem]{Corollary}
\newtheorem{proposition}[theorem]{Proposition}
\theoremstyle{definition}
\newtheorem{remark}[theorem]{Remark}
\renewcommand{\O}{{\mathcal O}}
\renewcommand{\L}{{\mathcal L}}
\renewcommand{\L}{{\mathcal L}}
\newcommand{\p}{{\mathbb P}}
\def\geq{\geqslant}
\def\leq{\leqslant}
\begin{document}
 
\title[On varieties whose surface sections have negative Kodaira dimension]{On varieties whose general surface section has negative Kodaira dimension}

\author{Ciro Ciliberto}
\address{Dipartimento di Matematica, Universit\`a di Roma Tor Vergata, Via O. Raimondo 00173 Roma, Italia}
\email{cilibert@mat.uniroma2.it}

\author{Claudio Fontanari}
\address{Dipartimento di Matematica, Universit\`a di Trento, Via Sommarive, 14, 38123 Povo, Italia}
\email{claudio.fontanari@unitn.it}

\subjclass{Primary 14E08, 14E30, 14M20, 14J40,  	14N05; Secondary 14J26,  	14N30}
 
\keywords{Rational varieties, linear sections, Kodaira dimension}
 
\maketitle

\medskip

\begin{abstract} In this paper, inspired by work of Fano, Morin and Campana--Flenner, we give a full projective classification of (however singular) varieties of dimension 3 whose general hyperplane sections have negative Kodaira dimension, and we partly extend such a classification to varieties of dimension $n\geq 4$ whose general surface sections have negative Kodaira dimension. In particular we prove that a variety of dimension $n\geq 3$ whose general surface sections have negative Kodaira dimension is birationally equivalent to the product of a general surface section times $\p^{n-2}$ unless (possibly) if the variety is a cubic hypersurface. \end{abstract}

\section{Introduction} 

In this paper we study irreducible, projective, non--degenerate, linearly normal, however singular, complex varieties $V\subseteq \p^r$ of dimension $n$, with $r\geq n+1$, such that the general surface  section of $V$ is rational or, more generally, has negative Kodaira dimension.

The subject has a long history that goes back more than a century ago. The first one who dealt with this topic was G. Fano in 1918, who claimed in \cite {Fa} that all threefolds with rational hyperplane sections are rational except the cubic threefold (at that time it was still unclear whether this was rational or not). The problem was taken over again about twenty years later by U. Morin in \cite {Mo}. In this paper Morin proposed a full projective classification of all varieties of dimension $n\geq 3$  whose general surface section is rational. Unfortunately both papers by Fano and Morin are affected by a serious mistake, which in modern terms can be explained as follows. They basically assume that, given a big and nef divisor $D$ on a smooth variety $X$, and given any positive integer $h$, one has $h^1(X, h(D+K_X))=0$. For $h=1$ this is ensured by Kawamata--Viehweg theorem, but for $h>1$, this is false. Despite this mistake, as we will see, the results claimed by Fano and Morin are true. 

The problem in question was reconsidered by Martynov in 1970 and 1973, first in the paper \cite {Ma1}, then in \cite {Ma2}. In the former paper Martynov proved that a smooth variety whose general hyperplane section is a rational surface is birationally equivalent either to $\p^3$ or to a cubic threefold. In the latter paper Martynov considers  the case of smooth threefolds whose general hyperplane section is a ruled surface with irregularity $q > 0$, proving that such a threefold is birationally equivalent to the  product of the projective plane and a nonsingular curve of genus $q$. The result in \cite {Ma2} has been reobtained, and partly extended to the singular case, by Buruiana \cite {Bu}. 

The last contribution on this topic is \cite {CF}, due to Campana--Flenner in 1993, where they classify pairs $(X,F)$, where $X$ is a normal threefold and $F$ is a smooth Cartier divisor on $X$, with Kodaira dimension $\kappa(F)=-\infty$ and ample normal bundle in $X$. Using Mori's theory, they prove that:\\
\begin{inparaenum}
\item [$\bullet$] either $X$ is birational to $F\times \p^1$;\\
\item [$\bullet$] or $X$ is birational to a sextic in $\p(1,1,1,2,3)$ or in $\p(1,1,2,2,3)$ with at most terminal singularities;\\
\item [$\bullet$] or $X$ is birational to a quartic  in $\p(1,1,1,1,2)$ with at most terminal singularities;\\
\item [$\bullet$] or $X$ is birational to a cubic threefold.  
\end{inparaenum}

The present paper can be considered as a set of footnotes to the aforementioned papers by Fano, Morin and Campana--Flenner. 

Our first objective has been to recover the classification result by Morin on threefolds whose general hyperplane section is rational. As we said before, Morin's argument contains a serious gap. However, by using ideas from \cite {CF} together with Morin's ones, we have been able to fix it. The result is the following:

\begin{theorem}\label{thm:main1} Let $V\subseteq \p^r$, with $r\geq 4$, be an irreducible, projective, non--degenerate, linearly normal variety of dimension 3, such that its general hyperplane section is a rational surface. Then $V$ is one of the following varieties (or internal projections of them):	\\
\begin{inparaenum}
\item [(i)] it is a threefold of minimal degree $d$ in $\p^{d+2}$, i.e., either a  quadric in $\p^4$, or a rational normal scroll or a cone over the Veronese surface of degree 4 in $\p^6$;\\
\item [(ii)] it is swept out by a 1--dimensional rational family of Veronese surfaces of degree 4 (or external projections of such surfaces);\\
\item [(iii)] it is swept out by a 1--dimensional rational family of (generically smooth) 2--dimensional quadrics;\\
\item [(iv)] it is a scroll in lines over a rational surface (in particular it could be a cone over a rational surface);\\
\item [(v)] it has degree $d$, with $3\leq d\leq 8$ and $r=d+1$, has genus 1 curve sections, and, if it is not a cone (in which case it falls in case (iv) above), it can have at most double points. If it is smooth, it is one of the following:\\
\begin{inparaenum}
\item [(v1)] a cubic hypersurface in $\p^4$;\\
\item [(v2)] the complete intersection of two quadrics in $\p^5$;\\
\item [(v3)] the section of the Grassmannian $\mathbb G(1,4)\subset \p^9$ of lines in $\p^4$ with a linear space of dimension 6;\\
\item [(v4)] the Pl\"ucker embedding of $\p^1\times \p^1\times \p^1$ in $\p^7$;\\
\item [(v5)] a hyperplane section of the Pl\"ucker embedding of $\p^2\times \p^2$ in $\p^8$;\\
\item [(v6)] the degree 7 image in $\p^8$ of the blow--up of $\p^3$ at a point $p$ via the morphism determined by the proper transform of the linear system of quadrics in $\p^3$ passing through $p$;\\
\item [(v7)] the 2--Veronese embedding of $\p^3$ in $\p^9$ that has degree 8;\\
\end{inparaenum} 
\item [(vi)] it is the 3--Veronese embedding of $\p^3$ in $\p^{19}$;\\
\item [(vii)] it is the 2--Veronese embedding of a quadric in $\p^4$ in $\p^{13}$;\\
\item [(viii)] it is the 2--Veronese embedding of the cone (with vertex a point) over a Veronese surface of degree 4 in $\p^5$, that has degree 32 and sits in $\p^{21}$;\\
\item [(ix)] it is the complete intersection of degree 8 in $\p^7$ of a cone with vertex a line $\ell$ over a Veronese surface of degree 4 in $\p^5$, with a quadric not containing the line $\ell$;\\
\item [(x)] it is the image of a cone in $\p^6$ with vertex a point over a rational normal scroll surface of degree 4 in $\p^5$ with a line directrix, via the linear system cut out on the cone by the cubic hypersurfaces containing three given plane generators of the cone: this variety has degree 36 and sits in $\p^{22}$;\\
\item [(xi)] it is the variety of degree 9 in $\p^7$ that is cut out on a cone with vertex a line over a rational normal scroll surface of degree 4 in $\p^5$ with a line directrix, by  a cubic hypersurface containing three 3--dimensional linear spaces generators of the cone.
\end{inparaenum}

In all the above cases $V$ is rational except if $V$ is a smooth cubic threefold in $\p^4$ in which case it is unirational but not rational. 
\end{theorem}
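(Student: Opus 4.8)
The plan is to study the polarised threefold $(V,H)$, with $H$ the hyperplane class, through its general surface section $(S,H_S)$, where $H_S=H|_S$; passing to a smooth model, adjunction gives $K_S=(K_V+H)|_S$. Since $S$ is rational we have $\kappa(S)=-\infty$, and because the rational curves covering $S$ move as $S$ ranges over the hyperplane sections filling up $V$, the threefold $V$ is uniruled and $\kappa(V)=-\infty$ as well. The invariant organising the whole classification is the sectional genus $g$, namely the arithmetic genus of the general curve section of $V$.

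The first half of the argument is adjunction-theoretic. Iterating the adjoint map on the rational surface $(S,H_S)$ and invoking the classification of polarised surfaces of Sommese--Van de Ven and Fujita pins down $S$ in terms of $g$ and its $\Delta$-genus. The delicate point---precisely where Fano and Morin stumbled---is to control the higher adjoint systems $|K_S+tH_S|$, since the naive vanishing $h^1(X,h(K_X+H))=0$ fails for $h>1$. To bypass this I would import the birational structure from Mori theory following Campana--Flenner: running the minimal model program on a resolution $X\to V$, uniruledness produces a Mori fibre space $X'\to B$ with $\dim B\in\{0,1,2\}$, and the hypothesis that the generic section is rational forces the fibration to be a conic bundle over a rational surface, a del Pezzo fibration over a rational curve, or a Fano threefold of restricted type.

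With the birational structure fixed, the second half reconstructs the projective model, matching the sectional-genus strata to the list: $g=0$ yields the minimal-degree, scroll and quadric-swept varieties (i), (iii), (iv) and the Veronese-swept family (ii); $g=1$ yields the del Pezzo threefolds of (v), whose smooth members (v1)--(v7) are read off from the classification of del Pezzo threefolds by degree $3\leq d\leq 8$; and the Veronese re-embeddings and cone constructions (vi)--(xi) appear as the secondary-adjunction and special projection cases. I expect the genuine obstacle to lie here: translating the birational Mori output into an explicit linearly normal embedding, controlling the internal projections allowed in the statement, pinning down the exact degree and dimension data, and certifying that no further cases slip through.

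The concluding rationality assertion then follows almost for free from the birational reduction. For every type on the list except the cubic hypersurface, the analysis yields $V$ birational to $S\times\p^1$; since $S$ is rational, $S\times\p^1$ is rational and hence so is $V$. Equivalently, each case is rational by direct parametrisation: quadrics, scrolls, projective-bundle and cone constructions, conic bundles admitting a rational section, the complete intersection of two quadrics, the Grassmannian and Segre linear sections, and the various Veronese images of $\p^3$. The sole exception is the smooth cubic threefold (v1): it is unirational, but by the Clemens--Griffiths theorem its intermediate Jacobian is not a product of Jacobians of curves, so it is not rational. This is exactly the variety that the birational reduction to $S\times\p^1$ is forced to exclude.
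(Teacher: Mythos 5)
Your skeleton agrees with the paper's first half: the paper also passes to a pair $(X,\L)$ with $X$ $\mathbb{Q}$--factorial terminal, reduces to a minimal pair, and applies the Campana--Flenner contraction theorem, whose fibration outputs ($\p^2$--bundle, quadric bundle, or $\p^1$--bundle) give exactly cases (i)--(iv) and the birational equivalence of $X$ with $F\times\p^1$. But at the point where you write ``I expect the genuine obstacle to lie here,'' you have deferred precisely the content of the theorem: all of cases (v)--(xi) arise from the remaining output, a $\mathbb{Q}$--Fano threefold with $\rho(X)=1$, and your proposal contains no mechanism for analysing it. The paper's mechanism is concrete: there exist coprime $p,q$ with $pF+qK_X=0$ in ${\rm Cl}(X)$ and $p=q+s$ with $s\in\{1,2,3\}$ ($s=1$ unless $F\cong\p^1\times\p^1$ or $F\cong\p^2$), and --- this is the decisive point you do not identify --- $h^1(X,\O_X(D))=0$ for \emph{every Weil divisor} $D$ on such an $X$. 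That vanishing is the correct replacement for the false statement $h^1(X,h(F+K_X))=0$ which derailed Fano and Morin: it guarantees that each adjoint system $|F+\mu(F+K_X)|$ cuts out on $F$ the \emph{complete} system $|F_{|F}+\mu K_F|$, so the classical adjunction computations become legitimate, but now carried out on $X$ with fractional Weil divisors, not on the polarised surface via Sommese--Van de Ven/Fujita as you propose. The sporadic cases are then pinned down by arithmetic on $(s,q,d=K_F^2)$: e.g.\ $F^3(q-1)=2q^3$ forces $(q,d)\in\{(3,3),(2,4)\}$, giving (vi) and (vii); $F^3=2q^2$ with $q=4$ and $q=2$ gives (viii) and (ix) while $q=3$ is shown impossible; $F^3=q^2$ with $q=6$ and $q=3$ gives (x) and (xi). Without this apparatus you cannot produce the explicit models, nor ``certify that no further cases slip through,'' which is the theorem's actual assertion.

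Two further points are wrong or missing as stated. First, your closing claim that ``for every type on the list except the cubic hypersurface, the analysis yields $V$ birational to $S\times\p^1$'' describes only the non--Fano cases; in the Fano case rationality is proved case by case: for singular $V$ with genus 1 curve sections by projecting from a double point onto a variety of minimal degree; for smooth $V$ by the classification of index 2 Fano threefolds, (v1)--(v7); and in cases (viii)--(xi) by exhibiting a section of the relevant conic bundle, obtained by showing the congruence of conics passes through a base or singular point of $X$. In particular your argument never explains why the \emph{irrational} degree 1 and 2 del Pezzo threefolds (and the Campana--Flenner weighted hypersurfaces) do not occur: in this paper's setting $\L$ is base point free and birational onto the image, so $d=F^3=\deg(V)\geq 3$, which is what excludes them --- a hypothesis-dependent step your Mori-fibre-space reduction alone does not see. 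Second, the qualifier ``or internal projections of them'' in the statement is not decoration: it is accounted for by the unprojection step, in which a non-minimal pair $(X,\L)$ is recognised as the blow-up of a point, so that $V$ is the internal projection of some $V'\subseteq\p^{r+1}$ with the same property, the process terminating because the Picard rank drops. Your sketch assumes minimality tacitly, but without this step the contraction theorem cannot even be invoked and the statement's list would be incomplete.
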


We notice that in item (i) we put the linearly normal threefold with rational curve linear sections (see \cite {EH}), that of course have rational surface linear sections. 

The proof of this theorem is contained in \S\S \ref {sec:applrat} and \ref {sec:fano}, after \S \ref {sec:basic} in which we give some basic definitions and \S \ref {sec:contr} that is devoted to recall some important preliminary results from \cite {CF}.

It is interesting to remark that Morin lists in \cite [\S 23]{Mo} all the types of linear systems of rational surfaces in $\p^3$ that represent (up to Cremona transformations) the rational threefolds in the list of Theorem \ref {thm:main1}. In this list there are three infinite series of linear systems, precisely:\\
\begin{inparaenum}
\item [($\alpha$)] linear systems of surfaces of degree $d$ with a point or a line of multiplicity $d-1$, that represent rational normal scrolls in item (i) of the list of Theorem \ref {thm:main1} or threefolds in item  (iv) of the list of Theorem \ref {thm:main1};\\
\item [($\beta$)] linear systems of surfaces of degree $d$ with a line of multiplicity $d-2$, that represent threefolds in item (ii) of the list of Theorem \ref {thm:main1};\\
\item [($\gamma$)] linear systems of surfaces of degree $d$ with a line $\ell$ of multiplicity $d-2$,
and a further base curve that is cut out in two points by the planes through $\ell$, 
 that represent threefolds in item (iii) of the list of Theorem \ref {thm:main1}.
 \end{inparaenum}
 
 Theorem \ref {thm:main1} implies that, besides these three types of linear systems (and linear systems contained in them or Cremona equivalent systems), there are only finitely many types of linear systems of rational surfaces in $\p^3$ (up to Cremona equivalence), and their dimensions do not exceed 22 (see item (x) in the list of Theorem \ref {thm:main1}). According to Morin \cite [\S 20]{Mo}, a Cremona representative of a linear system of dimension 22 of rational surfaces is as follows: \\
 \begin{inparaenum}
\item [($\delta$)]  linear system of surfaces of degree 6 in $\p^3$ with a point $p$ of multiplicity 4, with tangent cone consisting of a plane $\pi$ with multiplicity 4, with two infinitely near lines in the two successive infinitesimal neighborhoods of $p$ along $\pi$, and a further double point $q\not\in \pi$ with an infinitely double point, and a double line in the neighborhood of the infinitely near double point. Explicitly, such a linear system can be written in affine coordinates $(x,y,z)$ as
$$
az^2+z(yf_1+f_3)+y^3+y^2f_2+yf_4+f_6=0
$$  
where $f_i=f_i(x,y)$ is a homogeneous form of degree $i$ in $(x,y)$, for $i=1,2,3,4,6$. Here $p$ is the point at infinity of the $z$ axes, $\pi$ is the plane at infinity, $q$ is the origin and the infinitely near point to $q$ is along the $x$ axis. 
 \end{inparaenum}
 
 So we can state the:
 
 \begin{corollary}\label{cor:main} Besides the linear systems of indefinitely increasing dimensions of  types $(\alpha)$, $(\beta)$ and $(\gamma)$ (and systems contained in them or Cremona equivalent to them), the remaining linear systems of rational surfaces in $\p^3$ that determine maps birational to the image have dimension bounded by 22 and the bound is attained only by linear systems of the type $(\delta)$  (and Cremona equivalent systems). 
  \end{corollary}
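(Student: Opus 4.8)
The strategy is to read Corollary \ref{cor:main} off Theorem \ref{thm:main1} through the standard dictionary between linear systems on $\p^3$ and projective threefolds. A linear system $\L$ of rational surfaces in $\p^3$ of projective dimension $\dim\L$ determines a rational map $\f_\L\colon \p^3\map \p^{\dim\L}$; if $\f_\L$ is birational onto its image, the image $V$ is a non--degenerate rational threefold spanning $\p^{\dim\L}$, so that $r=\dim\L$. A general member of $\L$ is the proper transform of a general hyperplane section of $V$, and since $\f_\L$ is birational it is birational to such a section; hence the general hyperplane section of $V$ is rational and $V$ belongs to the list of Theorem \ref{thm:main1}. As $V$ is rational, the smooth cubic threefold of case (v1) cannot occur, but every other type does. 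Passing from $V$ to its linearly normal model can only increase $r$, so to bound $\dim\L$ it suffices to bound $r$ over the linearly normal (non--projected) threefolds of the list.

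First I would isolate the infinite families. The only items of Theorem \ref{thm:main1} in which $r$ is unbounded are (i) (for the rational normal scrolls), (iv) (scrolls in lines over a rational surface), (ii) (threefolds swept by a rational family of Veronese surfaces) and (iii) (threefolds swept by a rational family of quadrics). Following Morin's dictionary in \cite[\S 23]{Mo}, these match precisely the three series: the scrolls of (i) and (iv) correspond to surfaces of degree $d$ with a point or a line of multiplicity $d-1$, that is to $(\alpha)$; the Veronese--swept threefolds of (ii) correspond to a line of multiplicity $d-2$, that is to $(\beta)$; and the quadric--swept threefolds of (iii) correspond to a line of multiplicity $d-2$ together with the extra base curve, that is to $(\gamma)$. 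Internal projections of these threefolds correspond to subsystems, and a change of birational model of $\p^3$ to Cremona equivalence; this is what the parenthetical ``contained in them or Cremona equivalent'' records.

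It then remains to bound $r$ in the finitely many remaining items, which I would do simply by reading the statement: $r=d+1\leq 9$ in (v), $r=19$ in (vi), $r=13$ in (vii), $r=21$ in (viii), $r=7$ in (ix), $r=22$ in (x) and $r=7$ in (xi). Hence $\dim\L\leq 22$, with equality forcing $r=22$, no projection, and $V$ the degree--$36$ threefold of item (x). This already yields the numerical part of the corollary, since item (x) is the unique finite type attaining $r=22$.

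The last and most delicate step is the explicit identification of the extremal system with Morin's series $(\delta)$. Here I would transport the threefold of (x)---the image of the cone in $\p^6$ over a degree--$4$ scroll under the system of cubic hypersurfaces through three plane generators---back to $\p^3$ by a suitable Cremona transformation, and check that it corresponds to the linear system of sextic surfaces with the prescribed quadruple point (tangent cone a quadruple plane with two infinitely near lines) and the additional infinitely near double point, whose affine equation is the one displayed before the statement. A direct count of the free coefficients of $a$, of the forms $f_1,f_2,f_3,f_4,f_6$, and of the term $y^3$ gives $1+2+3+4+5+7+1=23$ monomials, i.e. projective dimension $22$, in agreement with item (x). The genuine content, and the main obstacle, is to verify that the displayed infinitely near base conditions are exactly those imposed by the geometry of the cone and of its three distinguished generators, which is the computation carried out in \cite[\S 20]{Mo}.
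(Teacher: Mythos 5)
Your proposal is correct and follows essentially the same route as the paper: the corollary is read off from Theorem \ref{thm:main1} via the dictionary between linear systems on $\p^3$ with birational associated map and rational threefolds with rational hyperplane sections, with the unbounded items (i), (iv), (ii), (iii) matched to $(\alpha)$, $(\beta)$, $(\gamma)$ exactly as in \cite[\S 23]{Mo}, the remaining items bounded by $r=22$ attained only in case (x), and the identification of the extremal system with $(\delta)$ delegated, as in the paper, to Morin's computation in \cite[\S 20]{Mo}. Your monomial count $1+2+3+4+5+7+1=23$ confirming $\dim\L=22$ for $(\delta)$ is a small consistency check beyond what the paper records, but the argument is otherwise the same.
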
 
  
  This Corollary gives a negative answer to a conjecture by Castelnuovo (see \cite [pp. 187--188]{Ca}, or \cite  [pp. 415--416]{Ca1}), to the effect that a linear system of rational surfaces of dimension $r>19$ is Cremona equivalent to a system contained in one of type $(\alpha)$, $(\beta)$ or $(\gamma)$. Castelnuovo also suggests that the only linear systems of rational surfaces of dimension $19$ not contained in one  of type $(\alpha)$, $(\beta)$ or $(\gamma)$ are Cremona equivalent to the linear system of cubic surfaces in $\p^3$. But this, as we saw, it is not true.  

Morin gives in \cite [\S 37]{Mo} a full classification of varieties of dimension $n\geq 4$ whose general surface section is rational, and he proves that these varieties are all rational with the only possible exceptions of cubic hypersurfaces. Unfortunately Morin's arguments are not convincing and  we have not been able to completely fix them. Therefore we have  only the following result:

\begin{theorem}\label{thm:main2}  Let $V\subseteq \p^r$, with $r\geq 5$, be an irreducible, projective, non--degenerate, linearly normal variety of dimension $n\geq 4$, such that its general surface linear section is rational. Then $V$ is rational except, may be, if $V$ is a smooth cubic hypersurface in $\p^{n+1}$. 
\end{theorem}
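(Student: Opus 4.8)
The plan is to reduce the higher-dimensional statement to the threefold case already settled in Theorem~\ref{thm:main1}, exploiting that the hypothesis on surface sections is preserved under taking general hyperplane sections. First I would observe that if $V\subseteq\p^r$ has dimension $n\geq 4$ and its general surface linear section is rational, then a general three-dimensional linear section $W=V\cap\Lambda$ (with $\Lambda$ a general linear space of codimension $n-3$) is an irreducible, non-degenerate threefold whose general surface section is again rational, since the general surface sections of $W$ are precisely the general surface sections of $V$ lying in $\Lambda$. Replacing $V$ by its image under the map attached to the full linear system we may assume linear normality, so Theorem~\ref{thm:main1} applies to $W$ and places it in one of the cases (i)--(xi).

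Next I would run through the classification of $W$ and, in each case, identify the structure it imposes on $V$. The key point is that a general three-dimensional section being, say, a scroll, or swept out by quadrics or Veronese surfaces, or having genus~$1$ curve sections, reflects an intrinsic projective-geometric feature of $V$ that propagates to the whole variety. Concretely: if $W$ is a scroll in lines over a rational surface (case (iv)), or of minimal degree, or swept out by a rational family of quadrics or Veroneses (cases (i)--(iii)), then $V$ carries a corresponding fibration or ruling whose general fibers are rational and whose base, after the fibration, is controlled; such varieties are rational because the generic fiber is rational over a rational (or unirational-to-rational) base, and one can build an explicit birational trivialization to a product with $\p^{n-2}$ using the family structure. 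For the Veronese-type and cone-type cases (vi)--(xi), the threefold is already rational, and the same defining linear-system description extends to $V$, forcing $V$ itself to be a Veronese-type or cone-type variety which is visibly rational.

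The delicate case is (v), where $W$ has degree $3\leq d\leq 8$ with genus~$1$ curve sections, because here the threefolds are Fano-like and not all obviously rational; in particular (v1) is the cubic threefold, which is unirational but not rational. I would argue that when $\dim V\geq 4$ the extra dimension rigidifies the situation: a variety of dimension $n\geq 4$ whose general threefold section is one of (v2)--(v7) is forced to be a del Pezzo-type variety of the corresponding degree, and by the classification of del Pezzo manifolds (Fujita) these are all rational. The genuinely exceptional locus is the cubic, and here the hypothesis $n\geq 4$ does not by itself guarantee rationality (the rationality of higher-dimensional cubics being subtle), which is exactly why the statement must leave the smooth cubic hypersurface as a possible exception.

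The main obstacle I expect is precisely this last step: proving that a variety of dimension $n\geq 4$ whose general threefold section lies in case (v) \emph{other} than the cubic is rational, rather than merely unirational, and showing that any non-rational possibility is forced into the cubic pigeonhole. This requires the del Pezzo-manifold classification together with an argument that the surface-section hypothesis, which only controls two-dimensional sections, nonetheless pins down the global del Pezzo structure; I would handle the cubic case separately by simply acknowledging it as the unresolved exception, consistent with the statement. I expect Morin's own argument to falter at exactly this gap, which is why the theorem is stated with ``except, may be, if $V$ is a smooth cubic hypersurface'' rather than as a clean classification.
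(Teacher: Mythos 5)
Your global strategy coincides with the paper's: \S\ref{sec:thm2} proves Theorem \ref{thm:main2} exactly by slicing to the general threefold section, invoking Theorem \ref{thm:main1}, and running through its list case by case. The problem is that at two crucial points you replace the actual arguments by general principles that are either false or unavailable. First, in the fibration cases (ii)--(iii) you conclude rationality because ``the generic fiber is rational over a rational base''; this inference is invalid in general (a quadric fibration over a rational base need not be rational without a section). The paper's Proposition \ref{prop:1dim} works harder: it fixes a general surface section $F$ spanning a space $\Pi$, projects from $\Pi$ to $\p^{n-3}$, observes that the pencil cut out on $F$ by the Veronese surfaces (resp.\ quadrics) is independent of the fibre, builds a dominant map $\pi'\colon V\dasharrow \p^{n-3}\times\p^1$, and then needs that the generic fibre is a Veronese surface isomorphic to $\p^2$ \emph{over the function field} $\C(\xi)$, respectively that the quadric fibration admits a unisecant coming from a fixed unisecant curve $\Gamma\subset F$. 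It is these pointed/sectioned structures, not fibrewise rationality alone, that yield rationality of $V$; your sketch omits them.

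Second, for cases (vi)--(xi) you assert that ``the same defining linear-system description extends to $V$, forcing $V$ itself to be a Veronese-type or cone-type variety which is visibly rational.'' That extension step is precisely the hard content, and in two of the cases it is not even known: the paper explicitly leaves open Morin's claims that the varieties in cases (viii) and (x) are cones (Remarks \ref{rem:mor} and \ref{rem:morino}) and instead proves rationality there by the projection-from-$\Pi$ argument, using that the generic threefold fibre is rational over $\C(\xi)$ (Proposition \ref{prop:fuji1}). Where the extension \emph{can} be proved, it requires specific tools you do not supply: Fujita's cone criterion (Theorem \ref{thm:fuji}) together with the vanishing $h^1(\p^3, T_{\p^3}\otimes\O_{\p^3}(-3i))=0$ in case (vi) (Proposition \ref{prop:fuji}); the Gaussian-map corank computations and universal-extension results of Ciliberto--Dedieu in case (vii) (Proposition \ref{prop:fuji0}); and explicit multiple-point, projection and class-group arguments in cases (ix) and (xi) (Proposition \ref{prop:veroint} and the following ones). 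Your treatment of case (v) is essentially the paper's (Proposition \ref{prop:ell}: smooth case via the del Pezzo classification of Fujita--Iskovskikh--Prokhorov, the cubic left as the possible exception), but you omit the singular subcase, which is handled by projecting from a double point onto a variety of minimal degree as in Lemma \ref{lem:op}; you also need the preliminary reduction, done in the paper by induction on $n$, to the case where $V$ is not a cone. As it stands, then, your proposal is the right skeleton but with genuine gaps at the fibration-section step and at the extension step for the Veronese/cone cases.
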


The proof of this theorem, obtained in \S \ref {sec:thm2},  consists in a case by case analysis according to the various types of threefolds linear sections of the variety $V$ as listed in Theorem \ref {thm:main1}. It therefore follows from a series of different propositions (Propositions \ref {prop:ell}, \ref {prop:1dim}, \ref {prop:case3}, \ref {prop:fuji}, \ref {prop:fuji0}, \ref {prop:fuji1}, \ref {prop:veroint}, \ref {prop:lpic}). Although we have been unable to find a full projective classification of varieties $V\subseteq \p^r$ of dimension $n\geq 4$ such that their general surface section is rational, in some cases we succeeded in providing, in the aforementioned propositions,  some  information about such a classification. 

Then, we turn to the case of threefolds whose general hyperplane section has negative Kodaira dimension but is not rational. The following classification theorem follows right away from the Contraction Theorem \ref {thm:mori}. 

\begin{theorem}\label{thm:main3} Let $V\subseteq \p^r$, with $r\geq 4$, be an irreducible, projective, non--degenerate, linearly normal variety of dimension $3$, such that its general hyperplane section  $F$ is not rational with negative Kodaira dimension. Then $V$ is of one of the following types (or internal projections of such varieties):	\\
\begin{inparaenum}
\item [(a)] it is a scroll in planes parametrized by a curve of positive genus;\\
\item [(b)] it is swept out by a 1--dimensional family of Veronese surfaces of degree 4 (or external projections of such surfaces) parametrized by a curve of positive genus;\\
\item [(c)] it is swept out by a 1--dimensional  family of (generically smooth) 2--dimensional quadrics parametrized by a curve of positive genus;\\
\item [(d)] it is a scroll in lines over a surface birational to $F$.
\end{inparaenum}

 In any case $V$ is birational to $F\times \p^1$. 
\end{theorem}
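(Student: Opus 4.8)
The plan is to invoke the Contraction Theorem (Theorem \ref{thm:mori}), cited from \cite{CF}, to analyze the structure of a minimal model of $V$ relative to its hyperplane bundle, and then to read off the four listed cases from the possible types of Mori contractions. The setup is as follows. Let $F$ be the general hyperplane section of $V$, a surface with $\kappa(F) = -\infty$ that is not rational. By the classification of surfaces, $F$ is therefore birationally ruled over a curve $C$ of genus $q = q(F) > 0$ (the irregularity), since a surface of negative Kodaira dimension is rational exactly when $q = 0$. This positive irregularity is the feature that propagates to $V$ and forces the geometry.

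First I would pass to a suitable resolution $X \to V$ and set up the pair $(X, H)$, where $H$ is the pullback of the hyperplane class (or its relevant big and nef part), so that the general member of $|H|$ resolves $F$. The Contraction Theorem should then provide a Mori-type contraction $X \to Y$ whose fibers are governed by the requirement that $\kappa(F) = -\infty$ with $q(F) > 0$. The key point is that since $F$ is ruled over a curve of genus $q > 0$, the Albanese map of $X$ (or of a resolution) is nontrivial and maps onto a curve of genus $q$; the fibration thus produced is what geometrically realizes cases (a)--(d). Concretely, the extremal contraction can only be of the types already enumerated for the rational case in Theorem \ref{thm:main1}, namely a scroll in planes, a family of Veronese surfaces of degree $4$, a family of $2$-dimensional quadrics, or a scroll in lines; the new input is merely that the base of the relevant family is now a curve of positive genus rather than a rational curve, so cases (i)--(iv) of Theorem \ref{thm:main1} degenerate into cases (a)--(d) here.

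Next, for the final assertion that $V$ is birational to $F \times \p^1$, I would argue that in each of the four cases the generic fiber of the contraction is a rational curve and the family is parametrized by a base birational to $F$ (or to a ruled surface birational to $F$). In cases (a), (b), (c) the generic fiber of the natural projection to the parametrizing curve is a surface (a plane, a Veronese, or a quadric) whose total space is birational to that surface times the curve, and in each case a further ruling gives a $\p^1$-bundle structure birationally over $F$. In case (d), the scroll in lines over a surface $S$ birational to $F$ is tautologically a $\p^1$-bundle over $S$, hence birational to $F \times \p^1$. The common mechanism is that the contraction exhibits $V$ as generically a $\p^1$-bundle over a surface birational to the section $F$, which is precisely the statement $V \sim_{\mathrm{bir}} F \times \p^1$.

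The main obstacle I anticipate is the correct bookkeeping needed to go from the output of the Contraction Theorem to the clean geometric descriptions (a)--(d): one must verify that no case producing a cubic-hypersurface-type exception (the genus-$1$ curve sections case (v) of Theorem \ref{thm:main1}) can occur here, and this is exactly where the hypothesis $q(F) > 0$ must be used decisively. Since a cubic threefold, a complete intersection of quadrics, and the other sporadic cases in (v) all have rational, hence $q = 0$, hyperplane or curve sections, the positive irregularity rules them out; similarly the Veronese-type exceptional cases (vi)--(xi) of Theorem \ref{thm:main1} have rational sections and are excluded. Thus the positivity of $q(F)$ collapses the long list of Theorem \ref{thm:main1} down to the four ruled/scroll cases, which is why the theorem ``follows right away'' once the Contraction Theorem is in hand. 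I would end by confirming in each remaining case that the birational projection onto $F \times \p^1$ is realized by the ruling, completing the proof.
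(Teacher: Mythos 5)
Your overall route is the paper's: pass to a minimal R--pair, apply the Contraction Theorem \ref{thm:mori}, match its fibering cases (1)--(3) with (a)--(d) (this is exactly Proposition \ref{prop:casi}), and obtain $V$ birational to $F\times\p^1$ by the fiberwise analysis carried out in the second proposition of \S\ref{sec:applrat} (where, in the quadric--bundle case, the paper supplies the unisecant curve you gloss over; over $\C(Y)$ this is guaranteed by Tsen's theorem, so your sketch is repairable there).

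However, there is a genuine logical flaw in how you exclude the Fano case (4). You argue that case (4) could only produce the sporadic entries (v)--(xi) of Theorem \ref{thm:main1}, and that these all have rational sections, so $q(F)>0$ rules them out. But that enumeration was derived in \S\ref{sec:fano} under the standing assumption that $F$ is \emph{rational}: the entire analysis there (e.g., ``since $F$ is rational, the adjunction extinguishes on $F$,'' the computations with the adjoint systems $F^{(\mu)}$, the values of $d=K_F^2$) presupposes rationality, so invoking that list when $F$ is irrational is circular --- a priori, the Fano case with irrational $F$ could produce varieties not on any previously compiled list. The rationality-free step you are missing is Lemma \ref{lem:5} (from \cite{CF}): for \emph{any} R--pair $(X,\L)$ with $X$ a $\Q$--Fano variety of Picard number $1$, the general $F\in\L$ is a Del Pezzo surface, hence rational, which immediately contradicts the hypothesis that $F$ is not rational. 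This is precisely how the paper disposes of case (4) (Remark \ref{rem:qfano}), and it is why the theorem ``follows right away'': no appeal to the classification in Theorem \ref{thm:main1} is needed or legitimate at this point. Your instinct that positive irregularity kills the Fano case is correct, but the justification must run through Lemma \ref{lem:5}, not through the list.
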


Finally we consider varieties of dimension $n\geq 4$ whose general surface section has negative Kodaira dimension but is not rational. Again we have been unable to find a full projective classification of these varieties, but we could prove the following result (see \S \ref {sec:04}) that explains the birational structure of these varieties and gives some partial results about the aforementioned classification:

\begin{theorem}\label{thm:main4} Let $V\subseteq \p^r$, with $r\geq n+1$, be an irreducible, projective, non--degenerate, linearly normal variety of dimension $n\geq 4$, such that its general surface linear section $F$ is not rational with negative Kodaira dimension. Then $V$ is birational to $F\times \p^{n-2}$. 

In particular, if the general threefold section of $V$ falls in case (a) [resp. in case (d)] of the list in Theorem \ref {thm:main3}, then $V$ is a scroll in linear spaces of dimension $n-1$ parametrized by a curve of positive genus [resp. is a scroll in linear spaces of dimension $n-2$ parametrized by a non--rational surface with negative Kodaira dimension]. 
\end{theorem}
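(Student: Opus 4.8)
The plan is to argue by induction on $n$, the base case $n=3$ being exactly Theorem \ref{thm:main3}, which gives $V\sim F\times\p^1$ together with the four structural types (a)--(d). Throughout I record the following reduction: since $F$ has negative Kodaira dimension and is not rational, the classification of surfaces shows that $F$ is birationally ruled over a (unique) smooth curve $B$ of genus $q=q(F)\geq 1$; as we work over $\C$, Tsen's theorem makes the generic conic of the ruling split, so $F\sim B\times\p^1$ and the target identity $V\sim F\times\p^{n-2}$ is equivalent to $V\sim B\times\p^{n-1}$.

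For the inductive step let $n\geq 4$ and let $V'=V\cap H$ be a general hyperplane section, of dimension $n-1$. Its general surface section is again $F$, so by the inductive hypothesis $V'\sim F\times\p^{n-3}\sim B\times\p^{n-2}$. First I would produce a fibration of $V$ onto $B$. Passing to a resolution $\tilde V$, the Lefschetz hyperplane theorem applied to $H^1$ along the chain of general sections down to the surface $F$ gives $q(\tilde V)=q(F)=q$ and $\operatorname{Alb}(\tilde V)\cong J(B)$; by functoriality the Albanese map of $V$ restricts on $V'$ to the Albanese map of $V'$, which by the inductive structure is the projection $V'\dashrightarrow B$. Hence the Albanese image of $V$ is the curve $B$, and we obtain a fibration $f\colon V\dashrightarrow B$ whose restriction to $V'$ is the product projection. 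The general fibre $\Phi$ of $f$ has dimension $n-1$, and $\Phi\cap H$ is precisely the general fibre of $f|_{V'}$, so $\Phi\cap H\sim\p^{n-2}$.

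Next I would show that $\Phi$ is rational. Its general surface section is rational (being a section of $\Phi\cap H\sim\p^{n-2}$), so Theorem \ref{thm:main2} (or Theorem \ref{thm:main1} when $n=4$) forces $\Phi$ to be either rational or a cubic hypersurface. To exclude the cubic I would compute the surface section of $\Phi$ from the type of the threefold section $W$ of $V$ supplied by Theorem \ref{thm:main3}: intersecting $f$ with the appropriate linear spaces, the surface section of $\Phi$ is the Albanese fibre of $W$, namely a plane in case (a), a Veronese surface of degree $4$ in case (b), a quadric in case (c), and a rational ruled surface $\mathbb{F}_e$ in case (d). In each instance this surface has degree $\neq 3$, or else is not the general hyperplane section of a cubic hypersurface, so $\Phi$ cannot be a cubic and is therefore rational. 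Consequently $f$ is a fibration over the curve $B$ with rational $(n-1)$-dimensional fibres whose restriction to $V'$ is already birationally trivial, and this yields $V\sim B\times\p^{n-1}\sim F\times\p^{n-2}$.

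Finally, for the ``in particular'' statement I would upgrade the birational conclusion to an explicit projective scroll in cases (a) and (d) by lifting the scroll structure from the codimension-one section. In case (a) the threefold section is a scroll in planes over $B$ and, by induction, $V'$ is a scroll in $\p^{n-2}$'s over $B$; one then shows that the $(n-2)$-planes of $V'$ are the hyperplane sections of a family of $(n-1)$-planes sweeping out $V$, so that $V$ is a scroll in $\p^{n-1}$'s over $B$. In case (d) the relevant fibration is instead onto the non-rational surface $\Sigma\sim F$ (the base of the line-scroll structure of $W$), and the same lifting exhibits $V$ as a scroll in $\p^{n-2}$'s over $\Sigma$. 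I expect the main obstacle to lie exactly here, in two linked points: excluding the cubic fibre uniformly across all $n$, and proving that the family of linear spaces on the section genuinely fills out a family of linear spaces (and not merely rational subvarieties) on $V$ --- that is, passing from ``rational fibres over the base'' to an honest projective scroll. This is precisely where the explicit projective classification of Theorem \ref{thm:main3} is indispensable.
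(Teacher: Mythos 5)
Your route (induction on $n$ plus the Albanese map) is genuinely different from the paper's, which uses neither induction nor Albanese but argues case by case on the type (a)--(d) of the general threefold section, mimicking Propositions \ref{prop:1dim} and \ref{prop:case3}. However, your argument has a genuine gap at its crux: the step ``$f\colon V\dasharrow B$ has rational $(n-1)$-dimensional fibres and is birationally trivial on $V'$, hence $V\sim B\times \p^{n-1}$'' is not a valid implication. Birational triviality over $B$ requires the \emph{generic} fibre $\Phi_\eta$ to be rational over the function field $K=\C(B)$, and rationality of the closed fibres over $\C$ (which is all Theorem \ref{thm:main2} can give you --- it is a statement over $\C$ and cannot be applied to $\Phi_\eta$ over $K$) is strictly weaker: geometrically rational fibrations over a curve need not be birationally trivial, and even the presence of the $K$-rational divisor $\Phi_\eta\cap H\sim \p^{n-2}_K$ does not in general force $K$-rationality (a cubic surface with a rational point is unirational but need not be rational, nontrivial forms of projective space exist, etc.). The paper's proof is organized precisely to supply this missing splitting data: in cases (b) and (c) it fibres $V$ over $\p^{n-3}\times\Gamma$ rather than over the curve alone, so that the fibres are \emph{surfaces} --- Veronese surfaces, which as degree-$4$ embeddings of forms of $\p^2$ are automatically split over the function field, and quadric surfaces, for which a unisecant is manufactured from a unisecant curve on $F$ --- while in cases (a) and (d) a parameter count produces an honest family of linear spaces on $V$ (linearity of the swept variety $\Pi_x$ being proved by cutting with a general linear space of dimension $r-n+3$ through $x$). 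You correctly flag the difficulty of ``passing from rational fibres to an honest scroll,'' but you locate it as an expected obstacle rather than resolving it, so the central claim $V\sim F\times\p^{n-2}$ remains unproved in your scheme; note also that in case (d) the Albanese image is only the curve $B$, so your main fibration cannot by itself yield the scroll over a \emph{surface} demanded by the ``in particular'' statement.

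There is also a secondary error worth fixing: the inference ``the general surface section of $\Phi$ is rational, being a section of $\Phi\cap H\sim\p^{n-2}$'' is false as stated, because birational equivalence does not control linear sections --- $\p^3$ re-embedded by quartics is rational but has K3 surface sections. The correct move, which you in fact make in the very next sentence, is to identify the surface sections of $\Phi$ directly as the Albanese fibres of the threefold sections classified in Theorem \ref{thm:main3} (a plane, a Veronese surface, a quadric, or a rational ruled surface), from which both rationality of the sections and the exclusion of the cubic fibre follow by degree considerations; so that portion is repairable. Minor further points: Theorem \ref{thm:main2} assumes linear normality, so applying it to the fibre $\Phi$ needs a re-embedding remark, and the Lefschetz-type argument computing $q(\tilde V)=q(F)$ must be run for the big and nef (not ample) pullback of the hyperplane class on a resolution, which requires Sommese-type versions of the theorem.
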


{\bf Acknowledgements:} The authors are members of GNSAGA of the Istituto Nazionale di Alta Matematica ``F. Severi''. This research project was partially supported by PRIN 2017 ``Moduli Theory and Birational Classification''.

\section{Basic definitions} \label{sec:basic}

In this paper we will be  interested in irreducible, projective, non--degenerate, linearly normal varieties $V\subseteq \p^r$ of dimension $n\geq 3$, such that their general 2--dimensional linear sections (with general linear subspaces of $\p^r$ of dimension $r-n+2$) have negative Kodaira dimension, in particular are rational. In a first part of this paper we will focus on the case $n=3$ and we will take the following equivalent viewpoint. We will consider pairs $(X, \L)$, where:\\
\begin{inparaenum}
\item [$\bullet$] $X$ is an irreducible, projective variety of dimension 3, having at most $\mathbb Q$--factorial, terminal singularities (in particular $X$ may have only isolated singularities);\\
\item [$\bullet$] $\L$ is a complete, base points free linear system of dimension $r$ of Cartier divisors on $X$, such that the morphism $\varphi_\L: X\longrightarrow \p^r$ determined by $\L$ is birational to its image $V$; then the general surface $F\in \L$ is smooth (and contained in the smooth locus of $X$), irreducible, and it is a big and nef divisor on $X$;\\
\item [$\bullet$] there is a non--empty open subset $U$ of $\L$ such that all surfaces $F\in U$ are smooth with negative Kodaira dimension.  
\end{inparaenum}

Pairs as $(X, \L)$ as above will be called \emph{R--pairs}. If for $F\in \L$ general, $F$ is rational, we will say that the R--pair $(X, \L)$  is \emph{rational}, otherwise it will be said to be \emph{irrational}.

Two R--pairs $(X,\L)$ and $(X',\L')$ are said to be \emph{birationally equivalent}, if there is a birational map $f: X\dasharrow X'$ that maps the linear system $\L$ to the linear system $\L'$. 

Given a R--pair pair $(X, \L)$, we will say that it is \emph{non--minimal} if there is pair $(X',\L')$ with a morphism $f: X\longrightarrow X'$ such that one of the following  facts happen:\\
\begin{inparaenum}
\item [$\bullet$] $f: X\longrightarrow X'$ is the blow--up of a subvariety $Z$ of $X'$  and $\L$ is the strict transform of $\L'$ via $f$;\\
\item [$\bullet$] $f: X\longrightarrow X'$ is the blow--up of a smooth point $p\in X'$ such that the general surface $F'\in \L'$ containing $p$ is smooth with negative Kodaira dimension and $\L$ is the strict transform of the linear system $\L'(-p)$ of surfaces in $\L'$ containing $p$.
\end{inparaenum}

A pair  $(X, \L)$ is said to be \emph{minimal} is it is not non--minimal. 
Given any R--pair, we can find a minimal R--pair birationally equivalent to it. So we can limit ourselves to consider minimal R--pairs. 
 
 \section{The contraction theorem}\label{sec:contr}

The following theorem will play a central role in what follows. 

\begin{theorem}[The Contraction Theorem]  \label{thm:mori} Let $(X_0,\L_0)$ be a minimal R--pair. Then there is a birationally equivalent R--pair $(X,\L)$ (with $F\in \L$ general)  with an extremal contraction $\phi: X\longrightarrow Y$ of fibering type with general fibre $Z$ such that either\\
\begin{inparaenum}
\item [(1)] $\phi: X\longrightarrow Y$ is a $\p^2$--bundle over a smooth curve $Y$, so that $Z\cong \p^2$,  and  $F^2\cdot Z\in \{1,2\}$, or\\
\item [(2)] $X$ is a quadric in a $\p^3$--bundle over a smooth curve $Y$, all fibres are irreducible and reduced, and $Z\cong \p^1\times \p^1$ with $\L_{|Z}\cong \O_{\p^1\times \p^1}(1,1)$, or\\
\item [(3)] $\phi: X\longrightarrow Y$ is a $\p^1$--bundle over a smooth surface, so that $Z\cong \p^1$, and $Z\cdot F=1$, or\\
\item[(4)] $X=Z$ is a $\mathbb Q$--Fano variety (i.e., $-K_X$ is ample) with Picard number $\rho(X)=1$.
\end{inparaenum}

\end{theorem}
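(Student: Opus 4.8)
The plan is to follow the strategy of Campana--Flenner \cite{CF}: to show that $X_0$ is uniruled, to run a minimal model program, and to read cases (1)--(4) off the resulting Mori fibre space. \emph{Uniruledness.} Since the general $F\in\L_0$ has $\kappa(F)=-\infty$, by the Enriques--Kodaira classification $F$ is ruled, hence uniruled. As $\L_0$ is base point free its general member passes through the general point of $X_0$, and composing with a ruling of $F$ we find a rational curve through the general point of $X_0$, so $X_0$ is uniruled. By the characterization of uniruledness for terminal $\Q$--factorial threefolds (Miyaoka--Mori, Boucksom--Demailly--Paun--Peternell) this is equivalent to $K_{X_0}$ being not pseudo--effective.

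\emph{Reduction to a Mori fibre space.} As $K_{X_0}$ is not pseudo--effective, a $K_{X_0}$--minimal model program cannot end with a minimal model and must terminate with a fibering extremal contraction $\phi\colon X\longrightarrow Y$ with $\rho(X/Y)=1$ and $-K_X$ relatively ample. Terminality and $\Q$--factoriality are preserved at each divisorial contraction and flip; carrying $\L_0$ along as its strict transform $\L$ and using that the general member is smooth and meets only finitely many of the modified curves, one checks --- this is where the preliminary results recalled in \S\,\ref{sec:contr} enter --- that $(X,\L)$ is again an R--pair, birationally equivalent to $(X_0,\L_0)$, with general surface $F$ still smooth and of negative Kodaira dimension. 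It then suffices to determine the possible polarized general fibres $(Z,\L_{|Z})$, according to $\dim Y\in\{0,1,2\}$. If $\dim Y=0$ then $X=Z$ has $\rho(X)=1$ and $-K_X$ ample, which is case (4).

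\emph{Classification of the fibres.} The key numerical input is that $q(F)=q(X)=q(Y)$: the first equality follows from the exact sequence $0\to\O_X(-F)\to\O_X\to\O_F\to 0$ together with Kawamata--Viehweg vanishing for the big and nef divisor $F$ (so that $H^1(X,\O_X(-F))=H^2(X,\O_X(-F))=0$ by Serre duality), and the second because the general fibre $Z$ is rationally connected. Suppose $\dim Y=1$. Then $Z$ is a smooth del Pezzo surface (as $(K_X)_{|Z}=K_Z$ and $-K_X$ is relatively ample), and $F$ is fibred over the curve $Y$ by the smooth curves $F\cap Z\in|\L_{|Z}|$. Since $\kappa(F)=-\infty$ the surface $F$ is rational or ruled over a curve of genus $q(F)=q(Y)$; in the first case $Y=\p^1$ and a positive genus fibration on a rational surface is impossible, while in the second case $\phi_{|F}$ must be the Albanese fibration of $F$, whose fibres are rational. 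Hence $F\cap Z$ is rational, i.e.
\[
\L_{|Z}\cdot(\L_{|Z}+K_Z)=-2 .
\]
Solving this on a del Pezzo surface, and using $\rho(X/Y)=1$ (so that $\L_{|Z}$ lies in the monodromy invariant part of $\Pic(Z)$) to rule out all del Pezzo surfaces other than $\p^2$ and $\p^1\times\p^1$, we are left with $Z\cong\p^2$ and $\L_{|Z}\in\{\O_{\p^2}(1),\O_{\p^2}(2)\}$, which is case (1), or $Z\cong\p^1\times\p^1$ and $\L_{|Z}\cong\O_{\p^1\times\p^1}(1,1)$, which is case (2). Finally, if $\dim Y=2$ then $Z\cong\p^1$ and $\phi$ is a $\p^1$--fibration over a surface $Y$ with $\kappa(Y)=-\infty$, and one shows that a fibering contraction with $Z\cdot F=1$ can be reached, so that $F$ is a section and $Y$ is birational to $F$: this is case (3).

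\emph{Main difficulty.} The delicate point is not the final numerical trichotomy, which is governed by the classification of surfaces and of del Pezzo fibres, but the reduction step: guaranteeing that the R--pair structure is preserved along the minimal model program --- that the general surface section stays smooth of negative Kodaira dimension and that the transformed linear system still yields a birationally equivalent R--pair --- and that, in the conic bundle case $\dim Y=2$, a fibering contraction realizing $Z\cdot F=1$ can indeed be attained. This is precisely what the results of Campana--Flenner recalled in \S\,\ref{sec:contr} are designed to provide.
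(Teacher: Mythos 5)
Your skeleton (uniruledness, MMP, classification of the Mori fibres) is the right one, but you have discarded the single idea on which the paper's proof actually runs, and this opens genuine gaps. The paper does not perform an arbitrary $K_X$--MMP: following Campana--Flenner, Lemma \ref{lem:uno} selects at each step the extremal ray $\R^+[C]$ \emph{adapted to $F$}, with $C\cdot(F+tK_X)=0$ for the threshold $0\leq t<1$ and, crucially, $C\cdot(F+K_X)<0$. It is only for the contraction of this ray that Lemmas \ref{lem:tre} and \ref{lem:4} apply: if the contraction were birational with exceptional set $E$ meeting the general $F$, the intersection curve $C$ would satisfy $(C^2)_F<0$ and $K_F\cdot C=(F+K_X)\cdot C<0$, hence be a $(-1)$--curve on $F$, so that $\phi$ factors through a blow--up and contradicts the \emph{minimality} of the R--pair. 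Thus $E\cap F=\emptyset$, every divisorial contraction and flip happens away from the general member, and the R--pair is preserved literally (the surface $F$ is untouched), not merely as a strict transform. In your version the MMP rays are generic, so nothing prevents the exceptional locus from meeting $F$; your appeal to ``the results recalled in \S\ref{sec:contr}'' is circular, because those results are statements about the adapted ray, not about an arbitrary $K_X$--negative ray --- and indeed the minimality hypothesis, which the theorem needs, never enters your argument.

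The same omission produces an outright false step in your fibre classification for $\dim Y=1$. To obtain $\L_{|Z}\cdot(\L_{|Z}+K_Z)=-2$ you argue that when $F$ is rational (so $Y\cong\p^1$) ``a positive genus fibration on a rational surface is impossible''. This is wrong: a rational elliptic surface (blow up $\p^2$ in the nine base points of a cubic pencil) fibres a rational surface over $\p^1$ with genus--$1$ fibres, and $\kappa(F)=-\infty$ does not exclude it. So the rationality of $F\cap Z$, and with it your whole numerical trichotomy, is unproved exactly in the case $F$ rational. In the paper (via \cite[Prop.\ (3.4)]{CF}, applied verbatim) the rationality and the degree bounds come instead from the adjoint inequality furnished by the adapted ray: for a line $\ell$ in the fibre one has $\ell\cdot(F+K_X)<0$, which forces $\ell\cdot F\in\{1,2\}$ when $Z\cong\p^2$ and $\L_{|Z}\cong\O_{\p^1\times\p^1}(1,1)$ in the quadric case; the same inequality applied to $Z\cong\p^1$, where $Z\cdot K_X=-2$, yields $Z\cdot F<2$, i.e.\ $Z\cdot F=1$ in case (3) --- a fact you merely assert ``can be reached''. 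Reinstate the choice of ray of Lemma \ref{lem:uno} together with the use of minimality in Lemmas \ref{lem:tre}--\ref{lem:4}, and your proof collapses onto the paper's; without it, the preservation step is unjustified and the fibre numerics rest on a false claim.
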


This is basically \cite [Thm. (1.3)] {CF}. For the reader's convenience we will sketch the proof later in this section. 
Before proving it, we need a few preliminary results. 

\subsection{Preliminary results}  We recall some results from \cite {CF}. First of all, the following lemma is like \cite [Thm. (1.7)] {CF} and can be proved in the same way.

\begin{lemma}\label{lem:uno} Let $(X,\L)$ be an R--pair with $F\in \L$ general. Then there is a $t\in \mathbb Q$, with $0\leq t<1$, and there is an extremal ray $\mathbb R^+[C]$ on $X$ such that $C\cdot (F+tK_X)=0$ and $C\cdot (F+K_X)<0$. Accordingly one can consider the extremal contraction $\phi: X\longrightarrow Y$ of the ray $\mathbb R^+[C]$.
\end{lemma}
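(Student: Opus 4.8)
The plan is to deduce the result from the non-nefness of $K_X+F$ together with the Cone Theorem for the $\mathbb{Q}$-factorial terminal threefold $X$.

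\emph{Step 1: $K_X+F$ is not nef.} Since the general $F\in\L$ is smooth and contained in the smooth locus of $X$, adjunction gives $(K_X+F)_{|F}\cong K_F$. A smooth projective surface with $\kappa=-\infty$ has non-pseudoeffective, hence non-nef, canonical class; thus there is an irreducible curve $C'\subset F$ with $K_F\cdot C'<0$. Regarding $C'$ as a curve on $X$ and using adjunction again, $(K_X+F)\cdot C'=\deg_{C'}K_F<0$, so $K_X+F$ is not nef on $X$.

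\emph{Step 2: the nef threshold.} Because $F$ is nef, the number
$$t_0:=\sup\{\,t\geq 0:\ F+tK_X\ \text{is nef}\,\}$$
satisfies $t_0\geq 0$, and the supremum is attained since the nef cone is closed; moreover $t_0<1$ by Step 1. Thus $F+t_0K_X$ is nef and lies on the boundary of $\mathrm{Nef}(X)$.

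\emph{Step 3: locating the extremal ray.} For $t>t_0$ the divisor $F+tK_X$ is not nef, so there are classes $z\in\overline{NE}(X)\setminus\{0\}$ with $(F+tK_X)\cdot z<0$; letting $t\downarrow t_0$ and using compactness of a section of $\overline{NE}(X)$, one obtains $z_0\in\overline{NE}(X)$ with $(F+t_0K_X)\cdot z_0=0$ and $K_X\cdot z_0<0$. By the Cone Theorem, the $K_X$-negative part of the supporting face $\{z:(F+t_0K_X)\cdot z=0\}$ is generated by finitely many extremal rays, each spanned by a rational curve $C$ with $0<-K_X\cdot C\leq 4$; choose one such $C$. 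Then $C\cdot(F+t_0K_X)=0$ gives $t_0=(F\cdot C)/(-K_X\cdot C)\in\mathbb{Q}$, and since $t_0<1$,
$$C\cdot(F+K_X)=(-K_X\cdot C)(t_0-1)<0.$$
As $\mathbb{R}^+[C]$ is a $K_X$-negative extremal ray, its extremal contraction $\phi\colon X\longrightarrow Y$ exists by the Contraction Theorem of Mori theory, completing the proof.

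\emph{Main obstacle.} The delicate point is Step 3: one must guarantee that the first wall met by $F+tK_X$ as $t$ grows towards $1$ is supported by a genuinely $K_X$-negative extremal ray, rather than by a face on which $K_X$ vanishes (which would not obstruct nefness). This is exactly where the limiting argument and the bound $-K_X\cdot C\leq 4$ from the Cone Theorem are needed, and where the hypotheses that $F$ is nef and that $\kappa(F)=-\infty$ enter decisively.
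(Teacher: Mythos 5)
First, a point of comparison: the paper does not actually prove this lemma --- it states that it ``is like [CF, Thm.\ (1.7)] and can be proved in the same way'', deferring entirely to Campana--Flenner. Your nef-threshold argument is precisely that standard route, and your Steps 1 and 2 are sound: adjunction along the general $F$ (smooth and contained in the smooth locus of $X$) gives $(K_X+F)_{|F}=K_F$, the hypothesis $\kappa(F)=-\infty$ forces $K_F$ to be non-nef, hence $K_X+F$ is not nef on $X$; and since the nef cone is closed, the threshold $t_0\in[0,1)$ exists and is attained.

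However Step 3, as written, has a genuine gap --- one you half-acknowledge in your closing paragraph but do not actually close. The compactness/limit argument only yields a class $z_0$ with $(F+t_0K_X)\cdot z_0=0$ and $K_X\cdot z_0\le 0$: strict negativity does not pass to the limit, and a priori the wall $(F+t_0K_X)^{\perp}\cap\overline{NE}(X)$ could consist entirely of classes with $F\cdot z=K_X\cdot z=0$, the $K_X$-negative extremal rays merely accumulating towards it with none of them lying on it; your sentence ``the $K_X$-negative part of the supporting face is generated by finitely many extremal rays'' is exactly what needs proof, not a quotable form of the Cone Theorem. The repair is a discreteness argument. By the Cone Theorem, any failure of nefness of $F+tK_X$ with $t>0$ is detected on a $K_X$-negative extremal ray, because $F+tK_X$ is nonnegative on $\overline{NE}(X)_{K_X\ge 0}$ and on the closure of the sum of that subcone with the rays; hence $t_0=\inf_R t_R$, where $t_R=(F\cdot C_R)/(-K_X\cdot C_R)$ runs over the $K_X$-negative extremal rays $R=\mathbb{R}^+[C_R]$. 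Now $F$ is Cartier, so $F\cdot C_R\in\mathbb{Z}_{\ge 0}$; $rK_X$ is Cartier for the fixed Gorenstein index $r$ of the terminal $\mathbb{Q}$-factorial $X$; and the Cone Theorem bounds $0<-K_X\cdot C_R\le 2\dim X=6$ (your bound $\le 4$ is Mori's bound for \emph{smooth} $X$; since $X$ is allowed terminal singularities you should quote the general klt bound --- any uniform bound suffices). Therefore $-K_X\cdot C_R$ takes only finitely many values, the rays with $t_R\le 1$ realize only finitely many thresholds, and the infimum is attained by some ray $C$: then $C\cdot(F+t_0K_X)=0$, $t_0\in\mathbb{Q}$, and $C\cdot(F+K_X)=(1-t_0)\,K_X\cdot C<0$, exactly as you computed, and the contraction of $\mathbb{R}^+[C]$ exists by the Contraction Theorem. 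Note finally that $F$ is only nef and big, not ample (the map $\varphi_{\mathcal L}$ is birational onto its image, not an embedding), so you cannot instead quote a version of the Rationality Theorem stated for an ample polarization without comment; the discreteness argument above sidesteps this issue.
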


The following lemma is like  \cite [Cor. (2.3)] {CF}. We sketch the proof because there is some small difference between our situation here and the one in  \cite {CF}.

\begin{lemma}\label{lem:tre} Let $(X,\L)$ be a minimal R--pair with $F\in \L$ general. Consider the 
extremal contraction $\phi: X\longrightarrow Y$ of the ray $\mathbb R^+[C]$ as in Lemma \ref {lem:uno}. Assume that $\phi$ is birational with exceptional set $E$. If $\dim(E)=1$ then $E\cap F=\emptyset$. If $E$ is a (prime) divisor intersecting $F$, then $\phi(E)$ is a point.
\end{lemma}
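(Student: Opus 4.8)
The plan is to analyze the extremal contraction $\phi: X\longrightarrow Y$ using the numerical conditions from Lemma~\ref{lem:uno}, together with the minimality hypothesis and standard facts about birational extremal contractions of threefolds with $\mathbb{Q}$-factorial terminal singularities (Mori theory). First I would set up the two key numerical relations: since $\mathbb{R}^+[C]$ is the contracted ray, any curve $\gamma$ contracted by $\phi$ satisfies $\gamma \equiv \lambda C$ for some $\lambda > 0$, and hence $\gamma\cdot(F+tK_X)=0$ and $\gamma\cdot(F+K_X)<0$. Writing $F\cdot \gamma = -t\,(K_X\cdot\gamma)$, and noting $0\le t<1$, one sees that $K_X\cdot\gamma<0$ (the contraction is $K_X$-negative), and that $F\cdot\gamma \ge 0$ with $F\cdot\gamma=0$ precisely when $t=0$ or $K_X\cdot\gamma=0$; since $K_X\cdot\gamma<0$, we get $F\cdot\gamma=0$ iff $t=0$.

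Next I would treat the case $\dim(E)=1$, where $E$ is a curve contracted to a point and $\phi$ is a small contraction. The claim $E\cap F=\emptyset$ should follow from the numerical computation: because $F$ is base point free (it is a member of the base-point-free system $\L$) and $F$ is general, if $E\cap F\ne\emptyset$ then some curve component of $E$ would meet the general divisor $F$, forcing $F\cdot E>0$; but $E$ lies in the ray, so $F\cdot E = -t\,(K_X\cdot E)$, which is $>0$ only if $t>0$. The idea is that a general member of a base-point-free linear system meets a fixed curve $E$ if and only if $F\cdot E>0$, and one must rule out this intersection. Here minimality is essential: if $F$ met $E$ in the small-contraction case, one would produce a contradiction, presumably because the general $F$ through a point would still have negative Kodaira dimension and the contraction could be reinterpreted as a non-minimal blow-up structure, or because a small contraction cannot occur compatibly with $F$ being Cartier, nef and meeting $E$. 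I would follow the argument of \cite[Cor. (2.3)]{CF} closely here, adapting it to the present singular setting.

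For the case where $E$ is a prime divisor meeting $F$, the goal is to show $\phi(E)$ is a point rather than a curve. Here I would argue by contradiction: suppose $\phi(E)$ is a curve $B\subset Y$, so that $\phi|_E: E\to B$ is a conic-bundle-type or divisorial fibration and the contracted curves are the fibres $\gamma$ of $E\to B$. Restricting to $F$: the surface $F$ meets $E$ in a curve $F\cap E$, and $F\cap E$ would dominate $B$ (since $F$ is ample-like/nef and general). Then $F\cap E$ contains infinitely many of the contracted fibres $\gamma$, or maps finitely to $B$; intersecting $F$ with such fibres gives $F\cdot \gamma=-t(K_X\cdot\gamma)>0$, forcing $t>0$. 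One then computes the restriction $K_F = (K_X+F)|_F$ by adjunction and uses that $\kappa(F)=-\infty$ together with the structure of $F\cap E$ to derive a contradiction with minimality — specifically, the divisorial contraction of a divisor onto a curve, when restricted to the general $F$, should reduce to a $(-1)$-curve or a sequence of them that could be blown down, contradicting that $(X,\L)$ is minimal (this is exactly the non-minimality of the first type in the definition). The main obstacle I anticipate is this last step: carefully controlling how the divisorial contraction restricts to the general surface section $F$ and extracting from $\kappa(F)=-\infty$ and $\mathbb{Q}$-factorial terminality the precise contradiction with minimality, since one must handle the possibility that $E$ contracts to a curve through singular points of $Y$ and verify that the relevant fibres genuinely appear in $F$. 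I expect this to require the local analysis of extremal divisorial contractions from \cite{CF} rather than a purely numerical argument.
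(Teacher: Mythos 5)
Your numerical setup is correct as far as it goes, but it stops exactly short of the one fact the whole lemma rests on: for a \emph{birational} extremal contraction one does not merely have $F\cdot\gamma\ge 0$ for contracted curves $\gamma$ — one has $F\cdot\gamma=0$. This is the content of \cite[Cor.\ (2.3)]{CF}, which you cite but whose mechanism you never identify: since $X$ has terminal $\mathbb{Q}$-factorial singularities and the contraction is birational, the ray contains a curve $\gamma$ with $-K_X\cdot\gamma\le 1$ (length bound), so $F\cdot\gamma=-t\,K_X\cdot\gamma\le t<1$; but $F$ is \emph{Cartier} and nef, so $F\cdot\gamma$ is a non-negative integer, hence zero. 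Without this, your two cases do not close. In the case $\dim(E)=1$ you hedge (``presumably'') between several possible contradictions and assert that ``minimality is essential''; in fact the paper uses no minimality here at all: once $E\cdot F=0$, base-point-freeness of $\L=|F|$ immediately gives $E\cap F=\emptyset$.

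In the divisor case your logic runs in the wrong direction. You suppose $F\cap E$ dominates $B=\phi(E)$ and deduce $F\cdot\gamma>0$, hence $t>0$ — but $t>0$ is perfectly compatible with Lemma~\ref{lem:uno}, so no contradiction follows from that alone. The paper's argument is the opposite: since $F\cdot\gamma=0$ for the generic fibre $\gamma$ of $\phi|_E:E\to B$, the general $F$ \emph{cannot} dominate $B$ and must meet $E$ along fibres; thus $F$ contains a fibre $C$, whence $(C^2)_F<0$, and $K_F\cdot C=(F+K_X)\cdot C<0$ by Lemma~\ref{lem:uno}, so $C$ is a $(-1)$-curve on $F$ and $\phi$ is exhibited as the blow-up of $Y$ along $\phi(E)$, contradicting minimality of $(X,\L)$. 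You do correctly anticipate the $(-1)$-curve/minimality punchline, but your route to it is blocked: without the vanishing $F\cdot\gamma=0$ you get neither the containment of fibres in $F$ nor $(C^2)_F<0$, and no amount of ``local analysis of divisorial contractions'' substitutes for that integrality argument.
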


\begin{proof} If $\dim(E)=1$, then one proves, as in \cite [Cor. (2.3)] {CF}, that $E\cdot F=0$. On the other hand $\L=|F|$ is base point free, so $E\cap F=\emptyset$. 

If $\dim(E)=2$ and $E\cap F\neq \emptyset$, suppose by contradiction that $\phi(E)$ is a curve and let $C$ be a generic fibre of $\phi:E\longrightarrow \phi(E)$. As before, one proves that $F\cdot C=0$. Then $F$ must intersect $E$ in fibres of $\phi:E\longrightarrow \phi(E)$. So the general $F$ contains a curve $C$. Then $(C^2)_F<0$. Moreover $K_X\cdot C<0$ (by Lemma \ref {lem:uno}). Then $K_F\cdot C=(F+K_X)\cdot C<0$. This implies that $C$ is a $(-1)$--curve on $F$ and this clearly contradicts the minimality of $(X,\L)$, because $\phi: X\longrightarrow Y$ turns out to be the blow--up of $Y$ along $\phi(E)$.
\end{proof}

The following lemma is inspired to \cite [Prop. (1.9)]{CF}. 

\begin{lemma}\label{lem:4} Let $(X,\L)$ be a minimal R--pair with $F\in \L$ general. Consider the 
extremal contraction $\phi: X\longrightarrow Y$ of the ray $\mathbb R^+[C]$ as in Lemma \ref {lem:uno}. Assume that $\phi$ is birational with exceptional set $E$. Then $E$ has empty intersection with $F$. 
\end{lemma}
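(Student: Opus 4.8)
The plan is to take up the single case left open by Lemma \ref{lem:tre}, namely that $E$ is a prime divisor with $\phi(E)=p$ a point and $E\cap F\neq\emptyset$, and to derive a contradiction with the minimality of $(X,\L)$. First I would dispose of the possibility $t=0$ in Lemma \ref{lem:uno}: there $F\cdot C=-tK_X\cdot C=0$, and since every curve contained in $E$ is contracted by $\phi$ and hence numerically proportional to $C$, the restriction $F_{|E}$ is a numerically trivial divisor on the projective surface $E$. Being also effective and base point free (as $\L=|F|$ is), it must vanish, so $F\cap E=\emptyset$, against the assumption. Hence $0<t<1$ and $F\cdot C>0$.

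The heart of the argument is then a numerical computation on the smooth surface $F$ showing that \emph{every} irreducible component of $F\cap E$ is a $(-1)$-curve. Let $\Gamma_i$ be such a component. Since $\Gamma_i\subset E$ is contracted, $\Gamma_i\equiv\lambda_i C$ with $\lambda_i>0$, so Lemma \ref{lem:uno} gives $K_F\cdot\Gamma_i=(K_X+F)\cdot\Gamma_i=\lambda_i(K_X+F)\cdot C<0$. On the other hand $-E$ is $\phi$--ample, so $E\cdot C<0$ and $\Gamma_i\cdot E=\lambda_i(E\cdot C)<0$; writing $E_{|F}=\sum_j a_j\Gamma_j$ with $a_j\geq 1$ and using $\Gamma_i\cdot\Gamma_j\geq 0$ for $j\neq i$, this forces $\Gamma_i^2<0$ on $F$. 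Adjunction $2p_a(\Gamma_i)-2=\Gamma_i^2+K_F\cdot\Gamma_i$ then yields $\Gamma_i^2=K_F\cdot\Gamma_i=-1$ and $p_a(\Gamma_i)=0$, i.e. $\Gamma_i$ is a $(-1)$-curve. The whole curve $F\cap E$ is contracted to the single point $p$ by the birational morphism $\phi_{|F}\colon F\to\phi(F)$, so it is connected and its intersection matrix is negative definite; as two distinct $(-1)$-curves meeting each other never form a negative definite configuration, $F\cap E$ must reduce to a single $(-1)$-curve $\Gamma$.

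Finally, I would translate this into a contradiction with minimality, exactly in the spirit of the proof of Lemma \ref{lem:tre}. The morphism $\phi_{|F}\colon F\to\phi(F)$ is now the blow--down of the $(-1)$-curve $\Gamma$, so $\phi(F)$ is smooth at $p$ and has the same (negative) Kodaira dimension as $F$. As $F$ varies in the base point free system $\L$ these blow--downs sweep out $E$ and fit together to exhibit $\phi\colon X\to Y$ as the blow--up of the smooth point $p\in Y$, with $\L$ the strict transform of $\L'(-p)$, where $\L'=\phi_*\L$. This makes $(X,\L)$ a non--minimal R--pair, the desired contradiction, whence $E\cap F=\emptyset$. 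The one delicate point I expect to require care is this last global step --- deducing that $Y$ (equivalently $\phi(F)$ for general $F$) is smooth at $p$ and that $\phi$ is genuinely the blow--up of a smooth point, so that non--minimality of the second type in our definition applies. This can be secured either by globalizing the local description of $\phi_{|F}$ just obtained or by invoking Mori's classification of divisorial extremal contractions of a terminal threefold to a point.
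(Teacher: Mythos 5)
Your numerical core --- that every irreducible component $\Gamma_i$ of $F\cap E$ satisfies $K_F\cdot\Gamma_i<0$ and $\Gamma_i^2<0$, hence is a $(-1)$--curve --- is sound and coincides with the computation in the paper's proof. The genuine gap is in the reduction to a \emph{single} $(-1)$--curve. You infer connectedness of $F\cap E$ from the fact that it is contracted to the single point $p$ by $\phi_{|F}\colon F\to\phi(F)$; but that inference requires Zariski's main theorem, hence normality of $\phi(F)$ at $p$, which is not known at this stage (indeed, smoothness of $\phi(F)$ at $p$ is exactly what you are trying to establish). Negative definiteness does not close the hole: a disjoint union of several $(-1)$--curves has intersection matrix $-I$, which is negative definite, and a birational morphism of surfaces can perfectly well contract two disjoint $(-1)$--curves to one non--normal point. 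If $F\cap E$ were such a disjoint union, $\phi(F)$ would not be smooth at $p$ and your concluding contradiction with minimality would not get off the ground. The paper closes precisely this point differently: since $\phi(E)=p$, any two curves $A,B$ on $E$ satisfy $B\equiv_E aA$ with $a>0$, so all pairs of curves on $E$ meet positively; hence the (base point free) trace of $\L$ on $E$ is not composed with a pencil, and Bertini gives that $F\cap E$ is smooth and \emph{irreducible} from the outset. Alternatively, your own first step supplies a patch you did not use: you proved $F\cdot C>0$ (for $t>0$; the case $t=0$ you disposed of directly). If the general member of the base point free trace of $\L$ on $E$ were disconnected, that trace would be composed with a base point free pencil, whose general member $P$ would satisfy $P^2=0$ on $E$ and hence $F\cdot P=0$; but $P\equiv\lambda C$ with $\lambda>0$ forces $F\cdot P=\lambda\, F\cdot C>0$, a contradiction. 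Either repair restores your argument.

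A secondary concern is the final globalization, which you rightly flag as delicate. Your fallback of ``invoking Mori's classification of divisorial extremal contractions of a terminal threefold to a point'' is not legitimate here: Mori's classification is for smooth threefolds, while $X$ is merely terminal and $\mathbb Q$--factorial; for terminal threefolds, divisorial contractions to points are in general weighted blow--ups (by much later work of Kawakita and others), and a weighted blow--up would not by itself contradict minimality as defined in \S\ref{sec:basic}, which requires an ordinary blow--up of a smooth point. The paper's route is your first suggestion made concrete: from smoothness of $\phi(F)$ at $p$ it deduces $(C\cdot C')_E=1$ for the traces $C,C'$ of two general members of $\L$, whence $E\cong\p^2$ and $\phi$ factors through the blow--up of $Y$ at $p$, contradicting minimality. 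With the connectedness step repaired and this factorization argument in place of the classification citation, your proof aligns with the paper's.
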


\begin{proof} We assume, by contradiction, that $F$ has non--empty intersection with $E$. By Lemma \ref {lem:tre}, we may assume that $\dim(E)=2$ and that $\phi(E)$ is a point $p$. 

Let $\Gamma$ be the effective Cartier divisor cut out on $E$ by $F$. Since $\L$ is base point free, then $\Gamma$ is reduced by Bertini's theorem. More precisely, either $\Gamma$ is irreducible with $\Gamma \cdot _E\Gamma>0$ ($\cdot _E$ means that we make the intersection product of Cartier divisors on $E$, whereas $\cdot$ indicates the intersection product on $X$), 
or $\Gamma$ is the sum of irreducible reduced curves $\Gamma_1,\ldots, \Gamma_h$ belonging to the same base point free pencil on $E$, and in this case  $\Gamma_i\cdot _E\Gamma_j=0$ for all $1\leq i\leq j\leq h$, thus $\Gamma \cdot _E\Gamma=0$. 

Let $D$ be an irreducible component of $\Gamma$. Since $D$ is exceptional on $F$, one has $D\cdot _FD<0$. Moreover $D$, being a curve on $E$, it is contracted to a point by $\phi$, so it is numerically equivalent to $C$. Hence $D\cdot (F+K_X)<0$, i.e., $D\cdot_F K_F<0$. This implies that $D$ is a smooth rational $(-1)$--curve on $F$. 

Now we claim that it cannot be the case that $D\cdot_ED =0$. Indeed, if this happens, then one has $F\cdot D=0$. Since $D\cdot (F+K_X)<0$, we have $D\cdot K_X<0$. As $D\cdot 
 (F+tK_X)=0$, one has $t(D\cdot K_X)=0$, and therefore $t=0$. This means that $\phi$ is defined by a linear system of the form $|mF|$, with $m$ a suitable positive integer. But this is impossible because, by hypothesis, $\phi$ contracts $E$ to a point, whereas the map defined by a linear system of the form $|mF|$ maps $E$ to a curve. So we have $D\cdot_ED >0$ and $D=\Gamma$ is smooth, irreducible and rational. 

At this point the proof goes, almost verbatim,  as the one of \cite [Prop. (1.9)]{CF}. We repeat it here for the reader's convenience. 

First we notice that $\Gamma$ is ample on $E$. Indeed, if $A$ is any effective divisor on $E$, one has that both $\Gamma$ and $A$ are numerically equivalent to $C$, so they are numerically equivalent, hence $\gamma \Gamma\equiv A$ on $X$, with $\gamma\in \mathbb Q$ positive. So one has
$$
\Gamma \cdot _E A=F\cdot A=\gamma F\cdot \Gamma=\gamma \Gamma \cdot _E\Gamma>0
$$
thus $\Gamma$ is ample by Nakai's criterion. 

Now consider the surface $F'=\phi(F)$. Since $F'$ has been obtained from $F$ by contracting to $p$ the $(-1)$--curve $\Gamma$, then $F'$ is smooth at $p$. Moreover $Y$ is $\mathbb Q$--factorial. By \cite [Lemma (2.4)]{CF}, $Y$ is smooth at $p$. Let $\mathfrak m$ be the maximal ideal of $Y$ at $p$. Note that
$$
\mathcal O_F(-E)=\mathfrak m \mathcal O_F=\mathcal O_F(-\Gamma)
$$
because $F$ and $E$ intersect transversely along $\Gamma$. Let $\Delta$ be the set of points where $\mathcal O_X(-E)$ is not generated by $\mathfrak m \mathcal O_X$. Since $\Delta$ has empty intersection with the ample divisor $\Gamma$ on $E$, it follows that $\Delta$ is a finite set not lying on $F$. 

Let $X'\longrightarrow Y$ be the blow--up of $Y$ at $p$. By the universal property of the blow--up, there is an induced regular map $X\setminus \Delta \longrightarrow X'$. Let $X^*\longrightarrow X$ be a sequence of blow--ups in $\Delta$ such that the composed map $\eta: X^*\longrightarrow X'$ is a morphism. If $E^*\subset X^*$ is an irreducible component of the exceptional set not equal to the proper transform of $E$, then $\eta(E^*)$ does not intersect the strict transform $F'$ of $F$ in $X'$, since otherwise $E^*$ would intersect the proper transform $F^*\simeq F$ of $F$ on $X^*$, which is not possible. Since  $F'$ is ample when restricted to  the exceptional divisor $E'\cong \mathbb P^2$ of $X'\longrightarrow Y$, we have that $\eta(E^*)$ is a point, and so we get a morphism $X\longrightarrow X'$, that is an isomorphism off a finite set of points. So it is an isomorphism everywhere. But then this contradicts the minimality assumption. 
\end{proof}

\subsection{Sketch of the proof of the contraction theorem}

We can now give the:

\begin{proof} [Sketch of the of the proof of Theorem \ref {thm:mori}] By Lemma  \ref {lem:uno}, there is an extremal contraction $\phi: X\longrightarrow Y$ of a ray $\mathbb R^+[C]$. If $\dim(Y)<3$, we set $(X_0,\L_0):=(X,\L)$. If $\dim(Y)=0$ we are in case (4). 
If $\dim(Y)=1$, then $Y$ is smooth (see \cite [(3.2), (1)]{CF}) and  (1) and (2) follow by \cite [Prop. (3.4)]{CF} that applies verbatim. If $\dim (Y)=2$ then (3) follows from \cite [Prop. (3.3)]{CF} that applies verbatim. Finally, assume that $\phi$ is birational with exceptional set $E$.  By Lemma \ref {lem:4}, $E$ has empty intersection with $F_0$ general in $\L_0$. If $\phi$ is a divisorial contraction, we can replace $X$ with $Y$ without affecting $F_0$ and then we can proceed by induction on the Picard number of $X_0$. If $E$ is a curve, there is a flip $\phi^+: X^+_0\longrightarrow Y$ that does not affect $F_0$. Since sequences of flips terminate, after a finite number of steps we obtain either a divisorial contraction or a contraction of fibre type, finishing the proof. 
\end{proof}

\section{First steps in the proofs} \label{sec:applrat}

In view of the proof of Theorems \ref {thm:main1} or \ref {thm:main3} we make now the following considerations. 

Let $V\subseteq \p^r$, with $r\geq 4$, be an irreducible, projective, non--degenerate, linearly normal variety of dimension 3, such that its general hyperplane section has negative Kodaira dimension. Let $\varphi: X\longrightarrow V$ be a partial desingularization, i.e., it eliminates all singularities that are not terminal and $\mathbb Q$--factorial. Moreover we assume 
that is  \emph{minimal} in this sense: there is no such partial desingularization $g: X'\longrightarrow V$ of $V$ with a birational morphism $h: X\to X'$ such that $\varphi=g \circ h$. Set $\mathcal L=\varphi^*(|\mathcal O_V(1)|)$. Then  $(X,\mathcal L)$ is an R--pair.  

Suppose that $(X,\mathcal L)$  is not minimal. Then by the minimality of the desingularization 
$f: X\longrightarrow V$, the only thing that can happen is that there is pair $(X',\L')$ with a morphism $f: X\longrightarrow X'$ that is the blow--up of a smooth point $p\in X'$ such that the general surface $F'\in \L'$ containing $p$ is smooth with negative Kodaira dimension and $\L$ is the strict transform of the linear system $\L'(-p)$ of surfaces in $\L'$ containing $p$. This clearly implies that there is a variety $V'\subseteq \p^{r+1}$ such that $V$ is the internal projection of $V'$ from a smooth point $q\in V'$. And the general hyperplane section of $V'$ has again negative Kodaira dimension.  If this is the case we may substitute $V$ with $V'$. This \emph{unprojection} process, i.e., passing from $V$ to $V'$, certainly will stop after finitely many steps, because, if we look at it on $X$, this implies the contraction of some exceptional divisor, and therefore reduces the rank of the Picard group of $X$. 

Eventually we will find in the way described above a pair $(X_0,\mathcal L_0)$ that is minimal. At this point we may apply the Contraction Theorem \ref {thm:mori} and we are in one of the cases (1)--(4) of that theorem. The case (4) will be called the \emph{Fano case}.

\begin{proposition}\label{prop:casi} Suppose that in the above setting we are not in the Fano case. If the general hyperplane section of $V$ is rational, then either $V$ is a rational normal scroll as in case (i) of Theorem \ref {thm:main1} or 
we are in one of the cases (ii)--(iv) of Theorem \ref {thm:main1}. If  the general hyperplane section of $V$ has negative Kodaira dimension but is not rational, then we are in one of the cases (a)--(d) of Theorem \ref {thm:main3}.  
\end{proposition}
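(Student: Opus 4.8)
The plan is to run through the three fibering possibilities (1), (2), (3) of the Contraction Theorem \ref{thm:mori} for the minimal R--pair $(X,\L)$ produced in \S\ref{sec:applrat}, and in each to read off the geometry of $V=\varphi_\L(X)$ by restricting $\L$ to the general fibre $Z$ of the extremal contraction $\phi\colon X\to Y$. In every case the subsystem of $|\L_{|Z}|$ actually cut out on $Z$ by $\L$ determines the image $\varphi_\L(Z)$, and, $\varphi_\L$ being birational onto $V$, distinct general fibres map to distinct surfaces, so $V$ is genuinely swept out by a family of such surfaces parametrised by $Y$. Concretely: in case (1), $Z\cong\p^2$ and $\L_{|Z}$ is either $\O_{\p^2}(1)$ or $\O_{\p^2}(2)$ (the two alternatives recorded by $F^2\cdot Z$), so $\varphi_\L(Z)$ is a plane, respectively a Veronese surface of degree $4$ or an external projection of it; in case (2), $Z\cong\p^1\times\p^1$ with $\L_{|Z}\cong\O(1,1)$, so $\varphi_\L(Z)$ is a (generically smooth) quadric surface; in case (3), $Z\cong\p^1$ with $Z\cdot F=1$, so $\L_{|Z}\cong\O_{\p^1}(1)$, $\varphi_\L(Z)$ is a line, and $V$ is a scroll in lines over $Y$.

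The second step is to pin down the birational type of the base, and this is where the hypothesis on $F$ enters. In cases (1) and (2) the base $Y$ is a smooth curve, and restricting $\phi$ to a general $F\in\L$ gives a fibration $\phi_{|F}\colon F\to Y$ whose general fibre is $F\cap Z$, that is a line or a conic in $\p^2$, respectively a $(1,1)$--curve on $\p^1\times\p^1$; in all these cases the fibre is a smooth rational curve. Thus $F$ is ruled by rational curves over $Y$, which re--confirms $\kappa(F)=-\infty$, and the pullback $H^0(Y,\Omega^1_Y)\hookrightarrow H^0(F,\Omega^1_F)$ yields $g(Y)\leq q(F)$; since for a non--rational surface of negative Kodaira dimension the ruling is essentially unique, in fact $g(Y)=q(F)$. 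Hence $F$ is rational precisely when $q(F)=0$, i.e.\ when $Y\cong\p^1$, whereas $F$ is irrational of negative Kodaira dimension exactly when $Y$ has positive genus. In case (3), since $Z\cdot F=1$ the morphism $\phi_{|F}\colon F\to Y$ is generically one--to--one, hence birational, so $Y$ is birational to $F$; thus $Y$ is rational when $F$ is, and is a non--rational surface of negative Kodaira dimension birational to $F$ otherwise.

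Combining the two steps produces the asserted dichotomy. When $F$ is rational: case (1) with $\L_{|Z}=\O_{\p^2}(1)$ and $Y\cong\p^1$ gives a scroll in planes over $\p^1$, i.e.\ a three--dimensional rational normal scroll, the relevant instance of case (i); case (1) with $\L_{|Z}=\O_{\p^2}(2)$ gives case (ii); case (2) gives case (iii); and case (3) with $Y$ rational gives a scroll in lines over a rational surface, case (iv). When $F$ is irrational with $\kappa(F)=-\infty$, the same four configurations, now with $Y$ of positive genus (respectively $Y$ birational to $F$ in case (3)), yield exactly cases (a), (b), (c), (d) of Theorem \ref{thm:main3}; degenerate members of these families account for the cones allowed in cases (i) and (iv). I expect two points to carry the real content. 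The first is verifying that $\varphi_\L$ restricted to a general fibre realises the stated projective surface rather than a degenerate image: one must control the subsystem of $|\L_{|Z}|$ genuinely cut out by $\L$, which is the origin of the ``external projection'' clauses in cases (ii) and (b). The second is the uniqueness of the ruling needed to upgrade $g(Y)\leq q(F)$ to an equality, which is precisely what ties the positive genus of the base to the irrationality of $F$ and thereby separates the cases of Theorem \ref{thm:main1} from those of Theorem \ref{thm:main3}.
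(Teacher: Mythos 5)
Your proof is correct and follows essentially the same route as the paper: its proof of Proposition \ref{prop:casi} is exactly your case-by-case reading of the fibering alternatives (1)--(3) of the Contraction Theorem \ref{thm:mori}, only stated more tersely (``Then clearly $V$ is either a scroll in planes or \dots''). Your additional details --- identifying $\L_{|Z}$ on the general fibre, and tying $g(Y)$ to $q(F)$ in cases (1)--(2) (respectively $Y$ birational to $F$ in case (3)) so as to separate cases (i)--(iv) of Theorem \ref{thm:main1} from cases (a)--(d) of Theorem \ref{thm:main3} --- simply make explicit what the paper leaves implicit.
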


\begin{proof} Suppose we are in case (1) of Theorem \ref {thm:mori}. Then clearly $V$ is either a scroll in planes or it is swept out by a 1--dimensional family of Veronese surfaces of degree 4 (or external projections of such surfaces). If we are in case (2) of Theorem \ref {thm:mori} then $V$ is swept out by a 1--dimensional family of (generically smooth) 2--dimensional quadrics. If we are in case (3) of Theorem \ref {thm:mori} it is a scroll in lines over a surface that is birational to a hyperplane section  of $V$. The assertion follows.
\end{proof}

 Moreover, we have:

\begin{proposition}\label{prop:rat} Suppose  that in the above setting  we are not in the Fano case. Then $X$ is birational to $F\times \p^1$, with $F\in \L$ general. In particular, if $F$ is rational, then $X$ is rational. 
\end{proposition}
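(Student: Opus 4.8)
The plan is to prove that in each of the non-Fano cases (1), (2), (3) of the Contraction Theorem \ref{thm:mori}, the variety $X$ is birational to a product $F \times \p^1$ with $F \in \L$ general. Since the Contraction Theorem gives us an extremal contraction $\phi: X \longrightarrow Y$ of fibering type, the natural strategy is to exhibit $X$ as birational to a product by analyzing the fibration $\phi$ together with the restriction of $\L$ to the fibres. The key observation is that in all three cases the general fibre $Z$ of $\phi$ meets the general surface section $F$ in a positive-dimensional linear system, and this should let us realize $F$ birationally as a section of $\phi$.

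First I would treat case (3), where $\phi: X \longrightarrow Y$ is a $\p^1$-bundle over a smooth surface and $Z \cdot F = 1$. Here the condition $Z \cdot F = 1$ means that the general $F$ is a rational section of $\phi$, so $\phi|_F: F \longrightarrow Y$ is birational; thus $Y$ is birational to $F$, and $X$, being a $\p^1$-bundle over $Y$, is birational to $F \times \p^1$. Next, in cases (1) and (2) the base $Y$ is a smooth curve. In case (1), $\phi$ is a $\p^2$-bundle over the curve $Y$ with $F^2 \cdot Z \in \{1, 2\}$, meaning the general $F$ cuts on each plane fibre $Z \cong \p^2$ either a line or a conic. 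In case (2), the general fibre is $Z \cong \p^1 \times \p^1$ and $\L|_Z \cong \O_{\p^1 \times \p^1}(1,1)$, so $F$ cuts on $Z$ a $(1,1)$-curve. In each of these subcases the restriction of $\L$ to the general fibre is a $g^r_d$ with a rational general member that is not rigid; the plan is to show that the induced map from $F$ to $Y$ (sending a point to the fibre through it) exhibits $F$ as birational to a $\p^1$- or conic-bundle over $Y$, so that $X$ is birational to $F \times \p^1$. More concretely, in case (1) the second projection of the plane bundle combined with the line/conic section should produce a birational identification of $X$ with a $\p^1$-bundle, hence with $F \times \p^1$, and likewise in case (2) using one of the two rulings of the quadric fibres.

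The unifying mechanism I would exploit is the following: the general surface $F$, being big and nef and cut out by $\L$, meets the general fibre $Z$ in a curve that moves in a pencil (or a family) inside $F$, and the parameter of that pencil, together with the base $Y$, recovers a product structure up to birational equivalence. In cases (1) and (2) the relevant pencil on $F$ is rational (a line pencil on a Hirzebruch-type structure, or a ruling of the quadric), and the corresponding rational map $F \dasharrow \p^1$ fibred over $Y$ gives the splitting. I would formalize this by noting that the total space $X$ maps to $Y$ with fibres admitting a rational ruling compatible across fibres, yielding a rational map $X \dasharrow F \times \p^1$ that is generically one-to-one.

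The hardest step will be making the birational product structure precise in cases (1) and (2), specifically verifying that the ruling of the fibres $Z$ glues into a global rational map $X \dasharrow \p^1$ (rather than only existing fibrewise) and that the resulting map to $F \times \p^1$ is genuinely birational and not merely dominant. This requires checking that the intermediate quotient has the right dimension and that the general fibre of the induced map $X \dasharrow F$ is a single $\p^1$; the subtlety is that on $\p^2$-fibres the line or conic section determines the ruling only after a choice, so one must ensure consistency over the base curve $Y$. Once this is established, the final clause — that $X$ is rational whenever $F$ is rational — follows immediately, since $F \times \p^1$ is rational as soon as $F$ is.
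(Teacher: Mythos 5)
Your treatment of case (3) coincides with the paper's: $Z\cdot F=1$ makes $\phi|_F\colon F\to Y$ birational, and $X$, a $\p^1$--bundle over $Y$, is then birational to $F\times \p^1$. In cases (1) and (2), however, your proposal stalls at exactly the step you yourself flag as hardest, and the mechanism you propose there is the wrong one. A global choice of ruling of the fibres need not exist: in case (2) the two rulings of a quadric fibre $Z\cong \p^1\times\p^1$ are in general exchanged by monodromy as $Z$ moves over $Y$ (the rulings form a double cover of $Y$ that may well be irreducible), and in case (1) a fibre $\p^2$ carries no ruling at all until one chooses a point or a pencil of lines in each fibre consistently over $Y$. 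You correctly identify this consistency problem, but you give no way to resolve it, so the map $X\dasharrow F\times\p^1$ you aim for is never actually constructed; as written, the core of the argument in cases (1) and (2) is a genuine gap.

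The paper's proof shows that the gluing problem never has to be faced, using two inputs your proposal lacks. First, since $Y$ is a curve, Tsen's theorem --- in the paper's language, the existence of a unisecant curve $\Gamma$ to the pencil $\mathcal P$ cut out on $F$ by the fibres $Z$ --- makes everything birationally trivial over $Y$: the genus--zero fibration $F\to Y$ (whose fibres are a line, a conic, or a $(1,1)$--curve in the respective cases) acquires a rational section, hence $F$ is birational to $Y\times\p^1$; and $X$ is birational to $Y\times\p^2$, immediately in case (1), and in case (2) by projecting each quadric fibre from the point of the section $\Gamma$, an operation that requires no choice of ruling whatsoever. Second, one concludes by the elementary re--association: $Y\times\p^2$ is birational to $Y\times\p^1\times\p^1=(Y\times\p^1)\times\p^1$, which is birational to $F\times\p^1$. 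Your proposal invokes neither the section (Tsen) nor the passage through $Y\times\p^2$ and the product shuffle; adding these two points would simultaneously fill the gap and make your fibrewise analysis of rulings unnecessary.
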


\begin{proof} Suppose we are in case (1) of Theorem \ref {thm:mori}. Then $F\in \L$ is birational to $Y\times \p^1$ and $X$ to $Y\times \p^2$, so $X$ is birational to $F\times \p^1$ as wanted. 

Suppose we are in case (2). Then $F$ is a smooth surface and the general fibre $Z\cong \p^1\times \p^1$ of $\phi: X\longrightarrow Y$ intersects $F$ along a rational curve in $|\O_{\p^1\times \p^1}(1,1)|$, so that $F$ has a pencil $\mathcal P$ of rational curves parametrized by the curve $Y$. It is well known that we can find a unisecant curve $\Gamma$ to the curves of $\mathcal P$, and $\Gamma$ is birational to $Y$. 
Then, in view of the existence of this unisecant,  $F$ is birational to $Y\times \p^1$ and $X$ is birational to $Y\times \p^2$, hence the assertion follows. 

Finally, in case (3), $F\in \L$ is birational to the surface $Y$ via $\phi$ and $X$ is birational to $Y\times \p^1$ and the assertion follows again. \end{proof}

\section{The Fano case}\label{sec:fano}

In this section we deal with the case (4) of Theorem \ref {thm:mori}. So we assume from now on that $(X,\L)$ is a R--pair, with $X$  a $\mathbb Q$--Fano variety with Picard number $\rho(X)=1$.

\subsection{General facts} First we need to recall some more results from \cite {CF}. The following is \cite [Lem. (4.2)]{CF}:

\begin{lemma}\label{lem:5} Let $(X,\L)$ be a R--pair, with $X$  a $\mathbb Q$--Fano variety with Picard number $\rho(X)=1$. If $F\in \L$ is general, then $F$ is a Del Pezzo surface and there exist  positive integers $p,q,s$  such that:\\
\begin{inparaenum}
\item [(i)] $p,q$ are coprime and $pF+qK_X=0$ in ${\rm Pic}(X)\otimes \mathbb Q$;\\
\item [(ii)] $p=q+s$;\\
\item [(iii)] $s=1$ unless $F\cong\p^1\times \p^1$ or $F\cong\p^2$; in the former case  one has $1\leq s\leq 2$, in the latter either $s=1$ or $s=3$. 
\end{inparaenum}
\end{lemma}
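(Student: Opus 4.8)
The plan is to use the hypothesis $\rho(X)=1$ to force a numerical proportionality between $F$ and $K_X$, and then to push all the information onto the smooth surface $F$ by adjunction. Since $X$ is $\mathbb Q$-factorial with $\rho(X)=1$, the space $\Pic(X)\otimes\mathbb Q$ is one-dimensional; both $-K_X$ (ample, as $X$ is $\mathbb Q$-Fano) and $F$ (big and nef, hence nonzero) lie on the positive ray, so they are proportional, and choosing coprime positive integers $p,q$ gives $pF+qK_X=0$, which is (i). At this point I would record the observation that on a variety with $\rho=1$ a nonzero nef class is automatically ample (the nef cone is the closure of the one-dimensional ample ray), so $\L$ is ample; consequently $L:=F_{|F}=\L_{|F}$ is an ample divisor on $F$.

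Next I would restrict the relation to $F$ and combine it with adjunction $K_F=(K_X+F)_{|F}$. Writing $K_X=-\tfrac pq F$ in $\Pic(X)\otimes\mathbb Q$ gives, after restriction, $qK_F=(q-p)L$ in $\Pic(F)\otimes\mathbb Q$; set $s:=p-q$, so that $-qK_F=sL$. Because $\kappa(F)=-\infty$, the class $K_F$ cannot be nef, and since $L$ is ample this forces $q-p<0$, i.e. $s\geq 1$, which is (ii); simultaneously $-K_F=\tfrac sq L$ is a positive rational multiple of an ample class, hence ample, so $F$ is a del Pezzo surface.

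For (iii) I would read off the divisibility of $-K_F$ in $\Pic(F)$. Now that $F$ is del Pezzo it is rational, so $\Pic(F)$ is torsion-free and the identity $-qK_F=sL$ holds in $\Pic(F)$ itself. Let $\iota$ be the index of $F$, i.e. write $-K_F=\iota M$ with $M\in\Pic(F)$ primitive (and ample). Then $q\iota M=sL$, and since $M$ is primitive, $L$ is integral, and $\gcd(s,q)=\gcd(p-q,q)=\gcd(p,q)=1$, a short arithmetic argument (reducing modulo $s$ and using that a primitive class has order $s$ in $\Pic(F)/s\Pic(F)$) yields $s\mid\iota$. The conclusion then follows from the standard classification of del Pezzo indices: $\iota=1$ for every del Pezzo surface except $F\cong\p^1\times\p^1$, where $\iota=2$, and $F\cong\p^2$, where $\iota=3$. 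Hence $s=1$ in general, $s\in\{1,2\}$ when $F\cong\p^1\times\p^1$, and $s\in\{1,3\}$ when $F\cong\p^2$, which is exactly (iii).

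The main obstacle I anticipate is the passage from the $\mathbb Q$-linear proportionality to the \emph{integral} identity $-qK_F=sL$ and the divisibility step $s\mid\iota$, which is where the coprimality of $p$ and $q$ is genuinely used; everything else is bookkeeping in the one-dimensional Néron--Severi space together with the elementary fact that a nonzero nef class is ample when $\rho=1$. I would also take care to check the one technical subtlety, namely that $K_F$, $L$ and $M$ are honest Cartier classes on the smooth surface $F\subset X_{\mathrm{sm}}$, so that no difficulty arises from $K_X$ being only $\mathbb Q$-Cartier.
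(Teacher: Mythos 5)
Your proof is correct and takes essentially the same route as the source: the paper itself gives no argument for this lemma, quoting it verbatim from \cite[Lem.~(4.2)]{CF}, and the Campana--Flenner argument is precisely your chain --- proportionality of $F$ and $-K_X$ forced by $\rho(X)=1$, adjunction giving $-qK_F=s\,F_{|F}$ with $s=p-q\geq 1$ from $\kappa(F)=-\infty$, and then the divisibility $s\mid\iota$ played against the classification of del Pezzo indices ($\iota=3$ for $\p^2$, $\iota=2$ for $\p^1\times\p^1$, $\iota=1$ otherwise). The one point you gloss, the upgrade from numerical proportionality to an equality in $\Pic(X)\otimes\Q$, is routine (it needs $h^1(\O_X)=0$, from Kawamata--Viehweg vanishing on the $\Q$-Fano $X$, so that numerically trivial classes are torsion) and is consistent with the paper's companion Lemma \ref{lem:5bis}, which records that ${\rm Cl}(X)$ is even torsion-free.
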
 

\begin{remark}\label{rem:qfano}
As an immediate consequence of Lemma \ref {lem:5} we have that R--pairs $(X,\L)$ with $X$ a $\mathbb Q$--Fano variety are rational. This concludes the proof of Theorem \ref {thm:main3}, showing that under the hypotheses of that theorem, only cases (1)--(3) of Theorem \ref {thm:mori} may occur.
\end{remark}

The following is \cite [Lem. (4.3)]{CF}:

\begin{lemma}\label{lem:5bis} Let $(X,\L)$ be a R--pair, with $X$  a $\mathbb Q$--Fano variety with Picard number $\rho(X)=1$. Then ${\rm Cl}(X)$ has no torsion. Hence referring to Lemma \ref {lem:5}, (i), one has $pF+qK_X=0$ in ${\rm Cl}(X)$. 
\end{lemma}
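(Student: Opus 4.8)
The plan is to show that ${\rm Cl}(X)$ is torsion--free by a cyclic covering argument that plays the simple connectivity of the Del Pezzo section $F$ against the connectedness of ample divisors; the displayed equality is then immediate from torsion--freeness.

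First I would pass to the smooth locus. Since $X$ has only isolated terminal singularities, the smooth locus $U=X\setminus\Sing(X)$ has complement of codimension $3$, so restriction gives isomorphisms ${\rm Cl}(X)\cong {\rm Cl}(U)=\Pic(U)$. Suppose, for contradiction, that $\tau\in{\rm Cl}(X)=\Pic(U)$ is a torsion element of \emph{exact} order $m>1$, represented by a Weil divisor $D$ that is Cartier on $U$ and satisfies $mD\sim 0$ on $X$. Associated to $\tau$ there is a cyclic cover $\pi\colon Y\to X$ of degree $m$, built from the $\O_X$--algebra $\bigoplus_{i=0}^{m-1}\O_X(iD)$ together with the trivialization $\O_X(mD)\cong\O_X$; here $Y$ is normal and irreducible (irreducibility because $\tau$ has exact order $m$), and $\pi$ is \'etale in codimension one, in fact \'etale over all of $U$.

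Now I would extract the contradiction from two geometric inputs. Because $\rho(X)=1$ and $F$ is big and nef, $F$ is ample, hence $\pi^*F$ is ample on $Y$ as the pullback of an ample divisor under a finite morphism. Since $F\subset U$ and $\pi$ is \'etale over $U$, the induced map $\pi^{-1}(F)\to F$ is an \'etale cover of degree $m$. But $F$ is a smooth Del Pezzo surface, hence rational, hence simply connected, so every \'etale cover of $F$ is trivial; thus $\pi^{-1}(F)$ is a disjoint union of $m$ copies of $F$ and in particular is \emph{disconnected}, as $m>1$. On the other hand $\pi^{-1}(F)$ is the support of the ample divisor $\pi^*F$ on the normal projective threefold $Y$, and an effective ample divisor on a normal projective variety of dimension $\geq 2$ is connected. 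This contradiction forces $m=1$, so ${\rm Cl}(X)$ has no torsion. For the final statement, Lemma \ref{lem:5}(i) gives $pF+qK_X=0$ in $\Pic(X)\otimes\Q$; since $X$ is $\Q$--factorial both $F$ and $K_X$ define classes in ${\rm Cl}(X)$, and $pF+qK_X$ then maps to $0$ in ${\rm Cl}(X)\otimes\Q$, i.e.\ it is torsion, hence vanishes in the torsion--free group ${\rm Cl}(X)$.

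The step I expect to require the most care is the construction and control of the cover $\pi\colon Y\to X$: one must check that $Y$ is normal and irreducible, that $\pi$ is genuinely \'etale over $U$ (so that $\pi^{-1}(F)\to F$ is an honest \'etale cover), and invoke the connectedness theorem for ample divisors in the normal, possibly singular, setting. The remaining topological and numerical inputs---rationality and simple connectivity of $F$, ampleness of $F$ from $\rho(X)=1$---are standard.
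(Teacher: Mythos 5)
Your proof is correct and matches the paper's route: the paper gives no argument of its own here, simply citing \cite[Lem.\ (4.3)]{CF}, and your cyclic-cover argument --- the cover is \'etale over the smooth locus, so the preimage of the simply connected Del Pezzo surface $F$ splits into $m$ copies, contradicting connectedness of the support of the ample divisor $\pi^*F$ on the normal threefold $Y$ --- is exactly the standard proof behind that citation. All the inputs you invoke are legitimately available at this point (Lemma \ref{lem:5} for $F$ Del Pezzo hence simply connected, ampleness of $F$ from $\rho(X)=1$ and $\mathbb Q$--factoriality, and Enriques--Severi--Zariski vanishing for the connectedness of ample divisors), so there is no gap.
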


The following is \cite [Lem. (4.5)]{CF}:

\begin {lemma}\label{lem:6} Let $(X,\L)$ be a R--pair, with $X$  a $\mathbb Q$--Fano variety with Picard number $\rho(X)=1$. If $D$ is  a Weil divisor on $X$, one has $h^1(X, \O_X(D))=0$. 
\end{lemma}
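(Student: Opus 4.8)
The plan is to reduce the problem to the fact that ${\rm Cl}(X)$ is infinite cyclic generated by an ample class, and then to run an induction on surface sections whose base case comes from the Enriques--Severi--Zariski lemma. By Lemma \ref{lem:5bis} and the $\Q$--factoriality of $X$, the group ${\rm Cl}(X)$ is torsion free of rank $\rho(X)=1$, hence ${\rm Cl}(X)=\Zz H$ for a generator $H$ which we may take to be ample (since $-K_X$ is ample, $-K_X=aH$ with $a>0$, so $H=\frac1a(-K_X)$ is $\Q$--ample). Write $F=bH$ with $b>0$ and $D=mH$ for the given Weil divisor; the goal is $h^1(X,\O_X(mH))=0$ for every $m\in\Zz$. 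From Lemma \ref{lem:5}(i) one has $pb=qa$ with $p=q+s>q$, so $b<a$; equivalently, setting $h:=H_{|F}$ and using adjunction $K_F=(K_X+F)_{|F}=(b-a)h$, this just records that $-K_F=(a-b)h$ is ample, i.e. that $F$ is a Del Pezzo surface.

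First I would dispose of the surface sections. Since $F$ is a smooth Del Pezzo surface and $h$ is ample, Kawamata--Viehweg vanishing gives $h^1(F,\O_F(mh))=0$ whenever $mh-K_F=(m+a-b)h$ is ample, that is for every $m\ge 0$ (as $a-b>0$); for $m<0$, Serre duality on $F$ gives $h^1(F,\O_F(mh))=h^1(F,\O_F((b-a-m)h))$, which vanishes by the same argument because $(b-a-m)h-K_F=-mh$ is ample. Hence $h^1(F,\O_F(mh))=0$ for all $m\in\Zz$.

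Next I would set up the restriction sequence. As $F$ is a base point free Cartier divisor contained in the smooth locus of $X$, tensoring $0\to\O_X(-F)\to\O_X\to\O_F\to 0$ by the reflexive sheaf $\O_X(mH)$ gives
\[
0\to \O_X((m-b)H)\to \O_X(mH)\to \O_F(mh)\to 0,
\]
where exactness on the left holds because $\O_X(mH)$ is torsion free (so multiplication by the section defining $F$ is injective and ${\rm Tor}_1^{\O_X}(\O_F,\O_X(mH))=0$), and the quotient is $\O_X(mH)_{|F}=\O_F(mh)$ since $F$ lies in the smooth locus. Passing to cohomology and using $h^1(F,\O_F(mh))=0$, the map $H^1(X,\O_X((m-b)H))\to H^1(X,\O_X(mH))$ is surjective; iterating, $H^1(X,\O_X(mH))$ is a quotient of $H^1(X,\O_X((m-kb)H))$ for every $k\ge 0$.

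It therefore suffices to prove the base case $h^1(X,\O_X(m'H))=0$ for $m'\ll 0$. Let $N$ be the smallest positive integer with $A:=NH$ Cartier (hence $A$ is ample Cartier), and write $m'=j-kN$ with $0\le j<N$. Each $\O_X(jH)$ is reflexive on the normal variety $X$, hence satisfies Serre's condition $S_2$, so the Enriques--Severi--Zariski lemma yields $h^1(X,\O_X(jH)\otimes\O_X(-kA))=h^1(X,\O_X(m'H))=0$ for $k\gg 0$; as there are only finitely many residues $j$, this holds for all $m'\ll 0$. Combining this with the previous paragraph gives $h^1(X,\O_X(mH))=0$ for every $m$, which is the assertion. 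The main obstacle I anticipate is not the induction itself but the bookkeeping around the reflexive (non--Cartier) sheaves $\O_X(mH)$: verifying the exactness of the restriction sequence and the precise hypotheses under which Enriques--Severi--Zariski vanishing applies to an $S_2$ sheaf twisted by an ample Cartier divisor.
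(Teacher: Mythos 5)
The paper does not contain a proof of this lemma at all: it is quoted verbatim from Campana--Flenner, with the reference \cite[Lem.\ (4.5)]{CF}, so there is no internal argument to compare against. Your proposal is therefore necessarily an independent route, and after checking it step by step I find it essentially correct: the vanishing $h^1(F,\O_F(mh))=0$ for all $m\in{\mathbb Z}$ via Kawamata--Viehweg for $m\geq 0$ and Serre duality plus Kawamata--Viehweg for $m<0$ is right (the key point being $a-b\geq 1$); the restriction sequence $0\to \O_X((m-b)H)\to \O_X(mH)\to \O_F(mh)\to 0$ is exact for the reasons you give (torsion-freeness on the left, and $\O_X(mH)$ invertible near $F$ because $F$ lies in the smooth locus of the terminal, hence normal, $X$, so the cokernel is the invertible sheaf $\O_F(mh)$ and the twist $\O_X(mH)\otimes\O_X(-F)$ is reflexive, hence equals $\O_X((m-b)H)$); and the descending induction with $b>0$ correctly reduces everything to $m'\ll 0$.

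Two points in your write-up are genuine steps that you assert rather than prove, though both are standard and fillable. First, ``torsion free of rank $\rho(X)=1$, hence ${\rm Cl}(X)={\mathbb Z}H$'' is not a formal implication: torsion-free rank-one abelian groups need not be cyclic (e.g.\ ${\mathbb Z}[1/2]$ or ${\mathbb Q}$). You need ${\rm Cl}(X)$ finitely generated, which holds here because $X$ has terminal, hence rational, singularities and $h^1(X,\O_X)=0$, so for a resolution $\pi:\tilde X\to X$ one has $h^1(\tilde X,\O_{\tilde X})=0$, ${\rm Pic}(\tilde X)={\rm NS}(\tilde X)$ is finitely generated, and ${\rm Cl}(X)$ is a quotient of ${\rm Cl}(\tilde X)={\rm Pic}(\tilde X)$ by pushforward; in fact Campana--Flenner prove ${\rm Cl}(X)\cong{\mathbb Z}$ in \cite[Lem.\ (4.3)]{CF}, of which Lemma \ref{lem:5bis} quotes only the torsion-freeness, so you could also just invoke that. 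Second, the Enriques--Severi--Zariski vanishing you use is not the textbook statement (which concerns invertible sheaves on a normal variety); you need $h^1(X,\mathcal F\otimes\O_X(-kA))=0$ for $k\gg 0$ when $\mathcal F$ is $S_2$ and $A$ is ample Cartier. This is true --- reflexive sheaves on a normal variety are $S_2$, so they have depth $\geq 2$ at closed points, and then duality on an ambient ${\mathbb P}^r$ identifies $H^1(X,\mathcal F(-k))$ for $k\gg0$ with sections of $\mathcal{E}xt^{r-1}_{{\mathbb P}^r}(\mathcal F,\omega_{{\mathbb P}^r})(k)$, a sheaf that vanishes by local duality precisely because of the depth condition --- but as you yourself anticipate, this is the one place where a precise reference or a short proof should be supplied. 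With these two points made explicit, your proof is complete and gives a self-contained substitute for the citation to \cite{CF}.
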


\subsection{The case $s=1$}
In this case we have the equality of $\mathbb Q$--divisors
$$
K_X= -\frac {q+1}q F
$$
hence
\begin{equation}\label{eq:qu}
K_F=K_X+F_{|F}= -\frac {F_{|F}}q
\end{equation}
and therefore 
\begin{equation}\label{eq:q}
d:=K_F^2= \frac {F^3}{q^2}
\end{equation}
and a priori one has $0<d\leq 9$, since $F$ is a Del Pezzo surface. 

We will consider linear systems of Weil divisors of the form $|F+\mu(F+K_X)|$ and we will denote by $\varphi_\mu$ the rational map determined by such a system. Note that
\begin{equation}\label{eq:qi}
F+\mu(F+K_X)=\Big( 1-\frac \mu q\Big) F
\end{equation}
in ${\rm Cl}(X)\otimes _{\mathbb Z}\mathbb Q$.

From the exact sequence
$$
0\longrightarrow \O_X(\mu(F+K_X))\longrightarrow \O_X(F+\mu(F+K_X))\longrightarrow \O_F(F+\mu K_F)\longrightarrow 0
$$
and from Lemma \ref {lem:6}, we have that $|F+\mu(F+K_X)|$ cuts out on $F$ the complete system 
$F^{(\mu)}:=|F_{|F}+\mu K_F|$, i.e., the \emph{$\mu$--adjoint system} to the \emph{characteristic linear system} $|F_{|F}|$ cut out by $\L$ on $F$, that is itself complete.

Since $F$ is rational, the adjunction extinguishes on $F$, and indeed, the linear systems $|F+\mu(F+K_X)|$ are empty for $\mu>q$ (see \eqref {eq:qi}). For $\mu=q$ the system $|F+\mu(F+K_X)|$ is zero. This implies that $|F+(q-1)(F+K_X)|$ 
cuts out on $F$ the linear system $F^{(q-1)}$ of curves of genus 1, that coincides with the anticanonical system on $F$, so it has dimension $d=K_F^2= \frac {F^3}{q^2}$. Note  in fact that
$$
F+(q-1)(F+K_X)=\frac Fq 
$$
in ${\rm Div}(X)\otimes_{\mathbb Z}\mathbb Q$ and recall \eqref {eq:qu}.

\begin{lemma}\label{lem:bpf} The linear system $|F+(q-1)(F+K_X)|$ has no fixed component and has a base curve $\Gamma$ only if $d=1$ and in this case $\Gamma\cdot F=1$.
\end{lemma}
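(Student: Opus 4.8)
The plan is to study $M:=|F+(q-1)(F+K_X)|$ through its restriction to a general $F\in\L$. As recalled just before the statement, since $(F+K_X)_{|F}=K_F$ and $F_{|F}=-qK_F$ one has $M_{|F}=-qK_F+(q-1)K_F=-K_F$, and by Lemma \ref{lem:6} (which kills $h^1(X,\O_X((q-1)(F+K_X)))$) this restriction is the \emph{complete} anticanonical system $|{-K_F}|$ of the Del Pezzo surface $F$. The second ingredient I would use throughout is that $F$ is ample: it is big, nef and nonzero, and $\rho(X)=1$, so its class generates $N^1(X)_{\mathbb Q}$ positively. Both the existence of a fixed component and the existence of a base curve of $M$ will then be tested by intersecting with the moving, base--point--free surface $F$ and reading off the answer on $|{-K_F}|$, whose behaviour on a Del Pezzo surface is completely known.

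First I would exclude a fixed component. If $D$ were one, every member of $M_{|F}=|{-K_F}|$ would contain $D\cap F$; if $D\cap F$ were a curve this would be a fixed component of $|{-K_F}|$, which is impossible since $-K_F$ is ample (the anticanonical system of a Del Pezzo surface has no fixed part for any $1\le d\le 9$). Hence $\dim(D\cap F)\le 0$; but for the effective divisor $D$ and the ample $F$, the set $D\cap F$ is nonempty of dimension $\ge 1$, a contradiction. Thus $M$ has no fixed component and its base locus is at most a curve.

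To locate that curve I would use that $H^0(X,\O_X(M))\to H^0(F,\O_F(-K_F))$ is surjective, so a point of a general $F$ is a base point of $M$ precisely when it is a base point of $|{-K_F}|$. For $d\ge 2$ the system $|{-K_F}|$ is base--point--free, so the base locus of $M$ misses the general $F$; as these cover $X$, $M$ is base--point--free and has no base curve. For $d=1$, instead, $|{-K_F}|$ is a pencil with a single base point $b_F$, reduced because the base scheme of the pencil has length $(-K_F)^2=1$; hence the base locus of $M$ meets each general $F$ in the one point $b_F$ and is therefore a curve $\Gamma$ with $\Gamma\cap F=\{b_F\}$. Finally, applying Bertini to the base--point--free $\L$ restricted to $\Gamma$, a general $F$ meets $\Gamma$ transversally in $\Gamma\cdot F$ distinct points, all equal to $b_F$, so $\Gamma\cdot F=1$. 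The delicate point I anticipate is precisely this $d=1$ step: one must know that the degree--one anticanonical pencil has exactly one reduced base point, after which ampleness of $F$, the surjectivity from Lemma \ref{lem:6}, and Bertini combine to give the statement.
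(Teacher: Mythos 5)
Your proposal is correct and follows essentially the same route as the paper: the paper's proof likewise observes that $|F+(q-1)(F+K_X)|$ cuts out on the ample $F$ (ample since $\rho(X)=1$) the complete anticanonical system of the Del Pezzo surface $F$, which has no fixed part and a base point only when $d=1$, whence $\Gamma\cdot F=1$. You have merely expanded the paper's one-line argument with the standard details (surjectivity via Lemma \ref{lem:6}, ampleness forcing any fixed component or base curve to meet $F$, and the length-one base scheme of the anticanonical pencil when $d=1$), all of which are implicit in the paper's proof.
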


\begin{proof} This follows from the fact that $|F+(q-1)(F+K_X)|$ cuts out on $F$ (that is ample, because $\rho(X)=1$), the anticanonical linear system that has no fixed components and has  one base point only when $d=1$, because $F$ is Del Pezzo. \end{proof}

In the next subsections we will discuss separately the various cases for $s,q$ and $d$. 

\subsection {The case $s=q=1$}\label{ssec:ell}
In this case the {characteristic linear system} cut out by $\L$ on $F$ is the anticanonical system of $F$ and consists of elliptic curves. Since $\L$ determines a birational map $\phi: X\longrightarrow V\subset \p^r$, one has $d=F^3=\deg(V)\geq 3$ and $d=F^3=\deg(V)\leq 9$. Moreover $r=d+1\geq 4$. 

\begin{lemma}\label{lem:op} In the above set up, if $V$ is singular, then $V$ is rational, hence $X$ is rational.
\end{lemma}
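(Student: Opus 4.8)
The plan is to exploit the numerical constraints coming from the fact that $V\subset \p^{d+1}$ is a threefold of degree $d=F^3$ with $3\leq d\leq 9$, so that $\deg V=\codim V+2$. Picking a singular point $p\in V$, I would study the projection $\pi_p\colon V\dasharrow \p^d$ from $p$, distinguishing according to whether or not $V$ is a cone with vertex $p$.

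If $V$ is a cone with vertex $p$, then $\pi_p$ contracts the rulings through $p$ and $V$ is birational to $W\times \p^1$, where $W=\pi_p(V)$ is the base of the cone. A general hyperplane not passing through $p$ meets $V$ isomorphically onto $W$, so $W$ is isomorphic to the general surface section $F$, which is a (rational) Del Pezzo surface; hence $V$ is rational. This is the case in which $V$ falls into item (iv) of Theorem \ref{thm:main1}.

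If $V$ is not a cone with vertex $p$, then $\pi_p$ is generically finite onto its image $V'\subset \p^d$, which is again an irreducible, non--degenerate threefold. Since projecting a non--degenerate variety from one of its points yields a non--degenerate variety, $V'$ satisfies the minimal degree bound $\deg V'\geq \codim V'+1=d-2$; on the other hand the projection formula gives $\deg V-\mult_p V=\deg(\pi_p)\cdot \deg V'$. Combining these, $d-\mult_p V=\deg(\pi_p)\cdot\deg V'\geq d-2$, whence $\mult_p V\leq 2$, so $\mult_p V=2$, and then $\deg(\pi_p)\cdot\deg V'=d-2$ together with $\deg V'\geq d-2$ forces $\deg(\pi_p)=1$ and $\deg V'=d-2$. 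Thus $\pi_p$ is birational and $V'$ is a threefold of minimal degree $d-2=\codim V'+1$ in $\p^d$. By the classical classification of varieties of minimal degree (see \cite{EH}), $V'$ is a linear space, a quadric, a rational normal scroll, or a (cone over the) Veronese surface, all of which are rational; therefore $V$, being birational to $V'$, is rational. Finally, since $\varphi\colon X\to V$ is birational, the rationality of $V$ yields the rationality of $X$.

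The step I expect to be the main obstacle is the degree bookkeeping for the projection from the double point: in codimension $\geq 2$ a general line through $p$ meets $V$ only at $p$ (with multiplicity $2$), so one cannot read off the degree of $\pi_p$ by naively counting residual intersection points, and must instead argue through the projection-degree formula $\deg V-\mult_p V=\deg(\pi_p)\cdot \deg V'$ paired with the lower bound $\deg V'\geq \codim V'+1$. One should also verify cleanly that the two alternatives (cone vertex versus generically finite projection) exhaust all singular points and that the multiplicity bound $\mult_p V\leq 2$ is exactly what pins $V'$ down to minimal degree.
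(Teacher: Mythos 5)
Your proposal is correct and follows essentially the same route as the paper: the paper's proof also projects from a singular point $p$, splits into the cases where the image is a surface (so $V$ is a cone with vertex $p$, with base isomorphic to the rational general hyperplane section) or a threefold, and in the latter case uses exactly your degree bookkeeping $\deg(V')=\frac{d-m}{t}\geq r-3=d-2$ to force $m=2$, $t=1$, so that $V$ projects birationally to a variety of minimal degree, which is rational. The only cosmetic difference is that the paper phrases the dichotomy via the dimension of the projected image rather than via the cone condition, and leaves the multiplicity-versus-degree formula implicit.
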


\begin{proof} Suppose $p$ is a singular point of $V$ with multiplicity $m$. Consider the projection of $V$ to $\p^{r-1}$ from $p$, with image $V'$. 

If $V'$ is a surface, then $V$ is a cone with vertex $p$ and $V'$ is isomorphic to a general hyperplane section of $V$, so it is birational to a general $F\in \L$, hence it is rational. Then $V$ itself is rational, and we are done. 

Suppose next that $V'$ is a threefold and that the projection $V\dasharrow V'$ from $p$ has degree $t$. Then $\deg (V')=\frac {d-m}t\geq r-3$, that implies $m=2$, $t=1$. So $V'$ is a variety of minimal  degree $r-3$ in $\p^{r-1}$, hence it is rational, thus $V$, that is birational to $V'$, is rational, and we are done again.\end{proof}

\begin{lemma}\label{lem:op1} In the above set up, if $V$ is smooth, then $V$ is rational, hence $X$ is rational, unless $V$ is a cubic threefold in $\p^4$.
\end{lemma}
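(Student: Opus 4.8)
The plan is to identify $V$ as a smooth del Pezzo threefold of Picard number one and then to read off rationality from the short list of possibilities, the sole exception being the cubic.

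First I would observe that $X\cong V$. Indeed $\varphi\colon X\to V$ is a birational morphism onto the \emph{smooth} (hence factorial) threefold $V$, so by purity its exceptional locus has pure codimension one; each contracted prime divisor would contribute an independent class to the N\'eron--Severi group, and since $\rho(X)=1$ none can be contracted. Hence the exceptional locus is empty and $\varphi$ is an isomorphism, so $V$ is smooth with $\rho(V)=1$. Since in the present case $s=q=1$ forces $p=2$, i.e. $K_X=-2F$, we get $-K_V=2H$ with $H$ the hyperplane class: thus $V$ is a del Pezzo threefold (Fano of index two) of Picard number one and degree $d=H^3=F^3=\deg(V)$, with $3\le d\le 9$. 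By the classification of such threefolds (Fujita, Iskovskikh--Prokhorov) the condition $\rho=1$ forces $d\in\{1,2,3,4,5,8\}$, so the constraint $d\ge 3$ leaves precisely $d\in\{3,4,5,8\}$: respectively a cubic in $\p^4$, the complete intersection of two quadrics in $\p^5$, the quintic del Pezzo threefold $\G(1,4)\cap\p^6$, and the $2$--Veronese image of $\p^3$ --- that is, cases (v1), (v2), (v3) and (v7) of Theorem \ref{thm:main1}.

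For $d\ge 4$ I would then prove rationality directly. The case $d=8$ is trivial, since $V\cong\p^3$. For $d=4$, I would project $V=Q_1\cap Q_2$ from a line $\ell\subset V$: a general plane $\Pi$ through $\ell$ meets each quadric in $\ell$ together with a residual line, and the two residual lines meet in a single point, so $V\cap\Pi$ is $\ell$ plus one residual point and the projection $V\dasharrow\p^3$ is birational. For $d=5$ the quintic del Pezzo threefold is classically rational, again via projection. Alternatively --- and in the spirit of Lemma \ref{lem:op} --- one may project $V$ from a general point $p$: by the degree count in the proof of Lemma \ref{lem:op} (with $m=1$) the projection is birational onto a threefold $V'\subset\p^{d}$ of degree $d-1$ with elliptic curve sections, and for general $p$ this $V'$ is singular; then Lemma \ref{lem:op} makes $V'$, hence $V$, rational.

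The remaining case $d=3$ is the genuine exception: a smooth cubic threefold is unirational (project from a line lying on it to obtain a conic bundle over $\p^2$) but not rational, by the intermediate Jacobian theorem of Clemens--Griffiths. I expect this last point --- the irrationality of the smooth cubic threefold --- to be the only truly hard and necessarily external ingredient; everything else is either the birational geometry already in place (the reduction to the singular case of Lemma \ref{lem:op}) or classical projective constructions.
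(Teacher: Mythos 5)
Your proof is correct and its skeleton is the same as the paper's: the paper's entire proof is the single sentence that a smooth $V$ in this set up is a Fano threefold of index $2$, ``well known to be rational unless $V$ is a smooth cubic threefold in $\p^4$'', with the classification cited from \cite{IP} (see Remark \ref{rem:fano2}). What you add on top of this is genuine and worth comparing. First, your observation that $\varphi_\L\colon X\longrightarrow V$ is an isomorphism when $V$ is smooth (purity forces any exceptional locus to be divisorial, $\rho(X)=1$ forbids contracted divisors, and a small birational morphism onto a smooth variety is an isomorphism) is sound, and it sharpens the case list from all index-two Fano threefolds with $3\leq d\leq 8$ --- cases (v1)--(v7) of Theorem \ref{thm:main1}, which is what the paper keeps --- down to the $\rho=1$ members $d\in\{3,4,5,8\}$; over-including $d=8$ in the $s=q=1$ analysis is harmless (the paper's Remark \ref{rem:fano2} assigns (v7) to $s=q=2$, but you prove its rationality anyway). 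Second, you replace the paper's appeal to ``well known'' by explicit classical constructions (projection of the $(2,2)$ complete intersection from a line on it, etc.), which are correct; for $d=4$ you implicitly use that a smooth intersection of two quadrics in $\p^5$ contains lines, a classical fact worth stating.

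Two small caveats. Your ``alternative'' internal-projection argument in the spirit of Lemma \ref{lem:op} needs lines of $V$ through the general point $p$ in order for $V'$ to be singular (the contracted lines produce the singular points); this holds for $d=4,5$, but the claim ``for general $p$ this $V'$ is singular'' fails for $d=8$, where the internal projection of the $2$--Veronese of $\p^3$ is exactly the smooth variety (v6) --- harmless here, since you dispose of $d=8$ directly as $V\cong\p^3$, but the alternative route should be restricted to $d\in\{4,5\}$. Finally, Clemens--Griffiths is not needed for this lemma: the statement only excepts the cubic and does not assert its irrationality (that assertion belongs to the closing sentence of Theorem \ref{thm:main1}), so the ``only truly hard external ingredient'' is in fact optional at this point of the paper.
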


\begin{proof} If $V$ is smooth, then $V$ is a Fano threefold of index 2, which is well known to be rational unless $V$ is a smooth cubic threefold in $\p^4$.
\end{proof}

\begin{remark}\label{rem:fano2} Smooth Fano threefolds of index 2 are classified  (see \cite[Chapt. 3]{IP}) and are listed in Theorem \ref {thm:main1} as cases (v1)--(v7). Actually case (v7) does not belong to the case $s=q=1$ but to the case $s=q=2$ as we will later see (see Remark \ref {rem:lhj} below). 

If $V$ as above in this subsection is singular, the proof of Lemma  \ref {lem:op} shows that either $V$ is a cone, in which case it is of type (iv) in the list of Theorem \ref {thm:main1}, or it has at most double points, from each of which it projects birationally to a rational threefold of minimal degree. At the best of our knowledge, a full classification of these singular threefolds is still missing. However in \cite [\S 20]{Enr} Enriques claimed to have the full list of all linear systems of surfaces in $\p^3$ (up to Cremona transformations) which represent the rational threefolds we considered in this section. 

Note that for $d=9$, the variety $V$ is  a cone. In fact it cannot be smooth by the classification of smooth Fano threefolds of index 2  (see \cite[Chapt. 3]{IP}). If it is not a cone, it has some isolated double point $p$. The general hyperplane section of $V$ through $p$ would then be a rational surface (because it projects birationally from $p$ to a surface of minimal degree 7 in $\p^8$) with a double point and general hyperplane section  elliptic curves of degree 9, and this is not possible, because such surfaces are 3--Veronese images of $\p^2$, which are smooth.

\end{remark}

\subsection{The case $s=1, q\geq 2, d\geq 3$}

\begin{lemma}\label{lem:7}  In this set up, consider the rational map $\varphi_{q-1}: X\dasharrow Y$ determined by the linear system $|F+(q-1)(F+K_X)|$. Then $\dim(Y)\geq 2$  and if the equality holds, then $X$ is rational.  
\end{lemma}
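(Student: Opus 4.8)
The plan is to study the system $|F+(q-1)(F+K_X)|$ entirely through its restriction to a general member $F\in\L$. Putting $\mu=q-1$ in the restriction sequence relating $\O_X((q-1)(F+K_X))$, $\O_X(F+(q-1)(F+K_X))$ and $\O_F(F_{|F}+(q-1)K_F)$, and using Lemma~\ref{lem:6} (so that $h^1(X,\O_X((q-1)(F+K_X)))=0$), one sees that $|F+(q-1)(F+K_X)|$ cuts out on $F$ the \emph{complete} system $F^{(q-1)}=|-K_F|$, where I have used $F_{|F}=-qK_F$ from \eqref{eq:qu}. Since $F$ is a Del Pezzo surface of degree $d\ge 3$ (Lemma~\ref{lem:5}), the anticanonical system $|-K_F|$ is very ample, so $\varphi_{q-1}$ restricts to an embedding of $F$ with $2$-dimensional image. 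As $F$ is a divisor in $X$ and the members of the base point free system $\L$ cover $X$, the image $Y=\overline{\varphi_{q-1}(X)}$ contains these anticanonical surfaces, whence $\dim(Y)\ge 2$. This settles the first assertion.

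For the equality case, suppose $\dim(Y)=2$. Then $\varphi_{q-1}|_F\colon F\to Y$ is a surjective morphism of surfaces; being the anticanonical embedding of $F$, it is birational onto its image, which by the dimension count must be all of $Y$. Hence $Y$ is birational to $F$, and since a Del Pezzo surface is rational, $Y$ is a rational surface. Moreover the general fibre of $\varphi_{q-1}$ is then a curve, and the birational map $F\to Y$ exhibits $F$ as a rational section of the fibration $\varphi_{q-1}\colon X\dasharrow Y$. To conclude rationality I would show that $\varphi_{q-1}$ is birationally a $\p^1$-fibration: the class $C$ of a general fibre satisfies $-K_X\cdot C>0$ because $X$ is $\mathbb{Q}$-Fano, and since the fibres form a covering family on which $K_X$ is negative, the general fibre is a rational curve. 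Combined with the section furnished by $F$, the generic fibre is $\p^1$ over the function field of $Y$, so $X$ is birational to $Y\times\p^1$; as $Y$ is rational, $X$ is rational.

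The step I expect to be the main obstacle is precisely the last one, namely upgrading the set-theoretic picture ``$\dim Y=2$ with $1$-dimensional fibres'' to the birational triviality $X\sim Y\times\p^1$: one must simultaneously control the rationality of the general fibre and exploit the section coming from $F$ (which also guarantees connectedness of the generic fibre, trivialising the Stein factorisation). It is worth recording a structural point that guides this: by \eqref{eq:qu} one has $F+(q-1)(F+K_X)\equiv \tfrac1q F$ in $\mathrm{Cl}(X)\otimes\Q$, which is ample since $\rho(X)=1$, while Lemma~\ref{lem:bpf} (together with the base point freeness of $|-K_F|$ on the covering family of general $F$) shows the system has no base points for $d\ge 3$. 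Thus $\varphi_{q-1}$ is in fact a finite morphism and $F\cdot C>0$ for every curve $C$, so no fibre can be contracted; this indicates that for $d\ge 3$ the equality $\dim Y=2$ does not genuinely occur and that one will ultimately work under $\dim Y=3$, the equality case being dispatched by the rationality argument above.
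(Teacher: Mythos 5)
Your first half coincides with the paper's argument: the restriction sequence plus Lemma~\ref{lem:6} shows $|F+(q-1)(F+K_X)|$ cuts out the complete anticanonical system on $F$, which for $d\geq 3$ is base point free and birational (indeed very ample), so $\dim (Y)\geq 2$; and in the equality case $\varphi_{q-1}|_F$ is birational onto $Y$, so $Y$ is rational. The genuine gap is in your treatment of the general fibre. The inference ``the fibres form a covering family on which $K_X$ is negative, hence the general fibre is a rational curve'' is false: a covering family of curves on a $\Q$--Fano variety need not consist of rational curves (on $\p^3$, say, the fibres of a rational map to $\p^2$ given by a net of cubics through a common curve form a covering family of positive genus curves on which $-K$ is positive; positivity of $-K_X$ on a fibre does not survive adjunction on a resolution, where $K_{X'}=\pi^*K_X+E$ with $E\cdot Z'\geq 0$ possibly large). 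Nor does the rational section you extract from $F$ help by itself: a section only gives the generic fibre a rational point, not genus $0$. The paper closes this exactly where you stop short: since $\varphi_{q-1}|_F$ is birational onto $Y$, the general fibre $Z$ meets the general $F$ in a single point, so $Z\cdot F=1$; as $\L=|F|$ is base point free and defines a birational map, $\varphi_\L$ sends $Z$ to a curve of degree $F\cdot Z=1$, i.e.\ birationally to a line, so $Z$ is rational. Combined with the rational section furnished by $F$, the generic fibre is $\p^1$ over $\mathbb C(Y)$ and $X$ is birational to $Y\times\p^1$, hence rational. Note your own ``rational section'' observation already contains the needed intersection count; you just never used it for the rationality of the fibre.

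Your closing structural claim is also unsound and should be deleted rather than repaired. From $F+(q-1)(F+K_X)\equiv \frac1q F$ ample in ${\rm Cl}(X)\otimes\Q$ you cannot conclude that the linear system is base point free on $X$, nor that $\varphi_{q-1}$ is a finite morphism: the system consists of Weil divisors that fail to be Cartier at singular points of $X$, and Lemma~\ref{lem:bpf} together with base point freeness on the general $F$ only shows the base locus is disjoint from the general $F$, hence a finite set supported at singular points. A curve $Z$ contracted by $\varphi_{q-1}$ through such a point has $\frac1q F\cdot Z=\frac1q>0$ with no contradiction to ampleness --- this is exactly what the fractional number $F\cdot Z=1$ against the class $\frac1q F$ signals. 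Indeed, in the parallel cases $d=2$ and $d=1$ the paper exhibits precisely such adjoint maps with two--dimensional images, all of whose fibres pass through base points located at singularities of $X$. So the equality case $\dim(Y)=2$ cannot be excluded by a finiteness argument, and the lemma must (and in the paper does) dispatch it by the rationality argument above.
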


\begin{proof}  Since  $d\geq 3$, the linear system $F^{(q-1)}$ of dimension $d\geq 3$ of curves of genus 1 on $F$ is base point free and birational. This implies that $Y$ is at least a surface.
Suppose $Y$ is a surface, and let $Z$ be the general fibre of $\varphi_{q-1}$. Then, since  $F^{(q-1)}$ is birational on $F$, then $Z\cdot F=1$. So $Z$ is a rational curve, and  $Y$ is rational, since it is birational to $F$. Then   $X$ is birational to $Y\times \p^1$ so it is rational. 
\end{proof}

\begin{lemma}\label{lem:8} Same set up of Lemma \ref {lem:7}. Assume $Y$ is a threefold. Let $S\in |F+(q-1)(F+K_X)|$ be a general element. Then the general curve in the  {characteristic  linear system} cut out by $|F+(q-1)(F+K_X)|$ on $S$ is smooth and rational. 
\end{lemma}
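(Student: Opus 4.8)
The plan is to study the surface $S$ together with the system cut out on it by $|M|$, where I abbreviate $M:=F+(q-1)(F+K_X)$. First I would record two integral identities in ${\rm Cl}(X)$: combining $pF+qK_X=0$ (Lemma \ref{lem:5}, valid in ${\rm Cl}(X)$ by Lemma \ref{lem:5bis}) with $p=q+1$ gives, after a one-line computation,
\begin{equation*}
qM=F\qquad\text{and}\qquad K_X+M=-F\quad\text{in }{\rm Cl}(X).
\end{equation*}
Since $S$ is a general member of $|M|$, adjunction yields $K_S=(K_X+S)_{|S}=-F_{|S}$; as $F$ is ample (because $\rho(X)=1$), the divisor $-K_S$ is ample, so $S$ is a del Pezzo surface, while the general characteristic curve $C$ satisfies $qC\equiv -K_S$. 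Thus $C$ is a \emph{$\tfrac1q$-anticanonical} curve, with $C^2=\tfrac1{q^2}K_S^2=d/q$ and $-K_S\cdot C=d$, where I use $K_S^2=(F_{|S})^2=\tfrac1qF^3=qd$.

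Next I would bound the possibilities through the map $\varphi_{q-1}\colon X\dasharrow Y\subseteq\p^{d}$ (note $\dim|M|=d$: by the discussion above $|M|$ cuts out the complete anticanonical system $|-K_F|$ on a general $F$, and the kernel $\O_X((q-1)(F+K_X))$ of the restriction has no sections). By hypothesis $\dim(Y)=3$, so $\varphi_{q-1}$ is generically finite of some degree $e\ge1$ onto a nondegenerate threefold, whence $\deg(Y)\ge d-2$. On the other hand $M$ is ample with $M^3=\tfrac1{q^3}F^3=d/q$, and since $|M|$ has no fixed divisor (Lemma \ref{lem:bpf}) the moving self-intersection satisfies $e\cdot\deg(Y)\le M^3=d/q$. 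Hence $qe(d-2)\le d$; with $q\ge2$, $e\ge1$, $d\ge3$ this forces $d\le4$ and leaves only
$$(q,d)\in\{(2,3),\,(2,4),\,(3,3)\},$$
with $e=1$ in each case, so $\varphi_{q-1}$ is birational onto $Y$.

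In the cases $(q,d)=(2,4)$ and $(3,3)$ the inequalities become equalities, so $|M|$ is base point free and $\deg(Y)=d-2$: thus $Y$ is a variety of minimal degree, namely a quadric threefold in $\p^4$ when $(q,d)=(2,4)$ and $Y=\p^3$ when $(q,d)=(3,3)$. The general curve section of such a $Y$ is a smooth rational normal curve (a conic, resp. a line); since $\varphi_{q-1}$ is birational and $|M|$ is base point free, Bertini applied on $S$ shows that the general $C$ is smooth and irreducible and maps birationally onto this curve section, so $C$ is a smooth rational projective curve, i.e. $C\cong\p^1$, as claimed.

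The remaining case $(q,d)=(2,3)$ is where I expect the genuine difficulty. Here $Y=\p^3$ but $M^3=\tfrac32>1=e\cdot\deg(Y)$, so $|M|$ must have base points, necessarily on the terminal singular locus of $X$ where $M$ fails to be Cartier; this is precisely why the naive adjunction genus $p_a(C)=1-\tfrac{(q-1)d}{2q}=\tfrac14$ comes out fractional, signalling that the general $C$ meets the singular points of $S$. Since $\varphi_{q-1}$ is still birational onto $\p^3$, the curve $C$ is birational to a line and hence rational, but its smoothness needs real care: I would resolve the base locus of $|M|$, analyse the terminal quotient point through which the characteristic pencil passes, and verify smoothness of the general characteristic curve there. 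Controlling the singularities of the general $S$ and the base scheme of $|M|$ in this degree-$6$ case is, I expect, the main obstacle; once the identities $K_S=-F_{|S}$ and $qC\equiv -K_S$ are in hand, the two generic cases are routine.
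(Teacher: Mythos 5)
There is a genuine gap: your argument does not prove the lemma in the case $(q,d)=(2,3)$, which your inequality $qe(d-2)\leq d$ leaves open, and you say so yourself --- you establish only that the general characteristic curve is birational to a line and explicitly defer its smoothness to an unexecuted local analysis of the base point of $|M|$ at a terminal singularity. Since the lemma asserts smoothness for \emph{every} $q\geq 2$, $d\geq 3$ with $\dim Y=3$, the proposal is incomplete exactly where it matters. Note also that your strategy inverts the paper's logical order: the birationality of $\varphi_{q-1}$, the identification of $Y$ as a threefold of minimal degree, and the reduction to $(q,d)\in\{(3,3),(2,4)\}$ (via $F^3(q-1)=2q^3$) are all carried out \emph{after} this lemma, in Proposition \ref{prop:rat} and Remark \ref{rem:spec}, and they \emph{use} it; trying to derive the numerical classification first is precisely what leaves you unable to kill $(2,3)$, because the tools that exclude it are downstream of the statement you are proving.

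The paper's own proof is a short adjunction argument that is uniform in $(q,d)$ and requires no case division. By Lemma \ref{lem:bpf} the characteristic system on $S$ has no fixed component, and it is not composed with a pencil because $Y$ is a threefold, so its general member $C$ is irreducible. The adjoint system of the characteristic system is cut out on $S$ by $|2S+K_X|$, and
$$
2S+K_X=\frac{2F}{q}-\frac{q+1}{q}F=-\frac{q-1}{q}F,
$$
which is negative against the ample $F$, hence the adjoint system is empty and $C$ has arithmetic genus $0$, i.e., it is smooth and rational. This disposes of your problematic case as well: the fractional ``genus'' $p_a=1/4$ you compute at $(q,d)=(2,3)$ is only the $\mathbb Q$-divisor intersection computation, whereas the adjoint-negativity argument bounds the honest (integral) arithmetic genus by zero no matter how $C$ meets the non-Cartier points of $S$. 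Your identities $qM=F$ and $K_X+M=-F$ in ${\rm Cl}(X)$, and your treatment of the cases $(2,4)$ and $(3,3)$, are correct and essentially anticipate Remark \ref{rem:spec}; but as a proof of the lemma the case analysis should be replaced by (or at least supplemented with) the adjunction computation above.
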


\begin{proof} By Lemma \ref {lem:bpf} the {characteristic  linear system} in question has no fixed components. Since we are assuming that $Y$ is a threefold, then  the characteristic  system is not composed with a pencil, hence it consists of irreducible curves.

The adjoint system to the characteristic system is cut out on $S$ by the linear system $|2S+K_X|$. But
$$
2S+K_X=\frac {2F}q-\frac {q+1}q F=-\frac {q-1}qF
$$
that is negative, and this implies  that the {characteristic  system} consists of curves of arithmetic genus 0, and we are done. 
\end{proof}

\begin{proposition}\label{prop:rat} In the set up of Lemma \ref {lem:8}, $X$ is rational. 
\end{proposition}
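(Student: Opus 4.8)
The plan is to exploit two facts already in hand: the $\mathbb Q$--linear equivalences $K_X=-\tfrac{q+1}q F$ and $D:=F+(q-1)(F+K_X)=\tfrac Fq$, together with the conclusion of Lemma \ref{lem:8} that the general characteristic curve is smooth and rational. The hypothesis of Lemma \ref{lem:8} is that $Y$ is a threefold (the case $\dim(Y)=2$ being settled in Lemma \ref{lem:7}), so I must treat the generically finite map $\varphi_{q-1}=\varphi_{|D|}\colon X\dasharrow Y\subseteq\p^N$ with $\dim(Y)=3$. I would first analyse a general surface $S\in|D|$ and its characteristic curve $C:=D_{|S}$. Adjunction together with $(q+1)F+qK_X=0$ in ${\rm Cl}(X)$ (Lemmas \ref{lem:5}, \ref{lem:5bis}) gives $K_S=(K_X+D)_{|S}=-qC$, while $C^2=D^3=F^3/q^3=d/q$. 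Because $\dim(Y)=3$, the restriction $\varphi_{|D|}|_S$ is generically finite onto $T:=\varphi_{q-1}(S)$, so $C$ is big and nef; hence $-K_S=qC$ is big and nef and $S$ is a (possibly singular) del Pezzo surface, in particular rational, with anticanonical class divisible by $q\geq2$.

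The second step is numerical. By Lemma \ref{lem:bpf} the system $|D|$ has no base curve (here $d\geq3$), so a general $C$ is a member of a base point free linear system and, being smooth and rational by Lemma \ref{lem:8}, avoids the finitely many singular points of $S$; adjunction then reads $-2=C^2+K_S\cdot C=(1-q)C^2$, whence $C^2=\tfrac2{q-1}$ and $K_S^2=q^2C^2=\tfrac{2q^2}{q-1}$. Matching these values against the del Pezzo surfaces whose anticanonical class is divisible by $q$ forces $q\in\{2,3\}$: for $q=3$ one gets $S\cong\p^2$ with $C$ a line, and for $q=2$ one gets $S\cong\p^1\times\p^1$ (or a quadric cone) with $C\in|\O(1,1)|$.

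Finally I would identify $Y$ and conclude. Since $h^1(X,\O_X)=0$, the restriction of $|D|$ to $S$ is the complete system $|C|$, which in both cases is very ample; thus $\varphi_{q-1}|_S\colon S\to T$ is an isomorphism onto a non--degenerate surface spanning a $\p^2$ (for $q=3$) or a $\p^3$ (for $q=2$). As $S\in|D|$, the surface $T$ is a general hyperplane section of $Y$, hence non--degenerate in the ambient hyperplane, so $N=3$ and $Y=\p^3$ when $q=3$, while $N=4$ and $Y$ is a quadric threefold when $q=2$. Moreover every point of $X$ lying over a general $t\in T$ lies on $S=\varphi_{q-1}^{-1}(\text{that hyperplane})$, so birationality of $\varphi_{q-1}|_S$ upgrades to birationality of $\varphi_{q-1}$ itself; since $\p^3$ and the quadric threefold are rational, $X$ is rational.

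The main obstacle I expect is control of singularities: $X$ may carry isolated terminal points, and $|D|$ is only known to be free of fixed curves, so a general $S$ could be singular or $|D|$ could have base points. One must therefore verify that $C$ genuinely avoids $\operatorname{Sing}(S)$ (making the adjunction computation legitimate), that the del Pezzo classification and the very ampleness of $|C|$ survive on the possibly singular $S$, and that $\varphi_{q-1}|_S$ is really birational onto $T$ — this last point being exactly what makes the span and degree conclusions pinning down $Y$ rigorous.
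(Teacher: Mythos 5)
Your route is genuinely different from the paper's, and its numerology is exactly the paper's: the equation $d(q-1)=2q$ (equivalently $F^3(q-1)=2q^3$), with the two solutions $(q,d)=(3,3)$ and $(q,d)=(2,4)$ and the identification of the image as $\p^3$ or a quadric threefold, is precisely what appears in Remark \ref{rem:spec} --- but there it is derived \emph{after} the proposition, as a consequence of it, not as an ingredient of its proof. The paper's actual proof is three lines and never touches your surface $S$: by Lemma \ref{lem:8} the characteristic system of $|F+(q-1)(F+K_X)|$ consists of smooth rational curves, so $\varphi_{q-1}$ is birational onto a threefold $Y\subseteq\p^d$ with rational curve sections; such a $Y$ is a variety of minimal degree $d-2$ in $\p^d$, hence rational, and therefore $X$ is rational. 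No adjunction on $S$, no del Pezzo classification, and no identification of $Y$ is needed for rationality. What your approach buys, if completed, is that it proves the content of Remark \ref{rem:spec} along the way; what it costs is exposure to the singularities of $S$, which the paper's argument entirely avoids.

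And that cost is where the genuine gap sits, as you yourself half-concede. Your load-bearing identity $-2=C^2+K_S\cdot C=(1-q)C^2$ is not justified as written: $D=F/q$ is only a Weil divisor ($F=qD$ in ${\rm Cl}(X)$, Lemma \ref{lem:5bis}), so a general $S\in|D|$ may pass through the terminal singular points of $X$, may be singular, and $K_S$ and $C=D_{|S}$ are a priori only $\Q$-Cartier there; if $C$ meets ${\rm Sing}(S)$, adjunction acquires a nonnegative correction term and your equality degrades to $(q-1)C^2\leq 2$, which lets the extra case $(q,d)=(2,3)$ survive and breaks the classification of $S$. Your proposed fix --- ``$C$ avoids ${\rm Sing}(S)$ because $|D|$ has no base curve'' --- does not work: Lemma \ref{lem:bpf} excludes fixed components and base \emph{curves}, not isolated base points, and isolated base points located at non-Cartier points of $D$ are exactly the danger. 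Concretely, $S=\p(1,1,4)$ with $C\in|\O_S(2)|$ satisfies all your $q=3$ numerics ($-K_S=3C$, $C^2=1$, $K_S^2=9$), yet every member of $|C|$ passes through the singular point; this model is in fact excluded only because there $|C|$ is composed with a pencil, contradicting $\dim(Y)=3$ --- an argument your write-up does not make. Similarly, ``$-K_S$ big and nef $\Rightarrow$ del Pezzo $\Rightarrow$ rational and in the classified list'' requires klt-type control of ${\rm Sing}(S)$ (a cone over an elliptic curve has ample anticanonical class and is irrational). Your final step, upgrading birationality of $\varphi_{q-1}|_S$ to birationality of $\varphi_{q-1}$ via hyperplanes through a general point, is fine. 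So the skeleton is salvageable, but as it stands the key step yields only an inequality, and the paper's detour through rational curve sections and varieties of minimal degree is the economical way to close the argument.
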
 

\begin{proof} Consider again  the map $\varphi_{q-1}: X\dasharrow Y\subseteq \p^d$. By Lemma \ref {lem:8}, this map is birational, and $Y$ has rational curve sections, so it is a threefold of minimal degree $d-2$ in $\p^d$, so it is rational. The assertion follows.
\end{proof}

\begin{remark}\label{rem:spec} We can be more specific about the values of $d$ and $q$ compatible with the situation considered in this subsection. Indeed,  the proof of Proposition \ref {prop:rat} and \eqref {eq:q} imply that 
$$
\Big (\frac Fq\Big)^3=d-2=\frac {F^3}{q^2}-2.
$$
Hence $F^3(q-1)=2q^3$. This implies that $q^3$ divides $F^3$ so $F^3=aq^3$ for some positive integer $a$. So we get $2=a(q-1)$, and we have only the two possibilities
$$
a=1, q=3, d=3, F^3=27 \quad \text {and} \quad  a=2, q=2, d=4, F^3=16.
$$
These two possibilities correspond to actually existing threefolds. In fact, in the case 
$a=1, q=3, d=3, F^3=27$, the map $\varphi_2: X\dasharrow Y= \p^3$ is birational, and, since
$$
F+2(F+K_X)=\frac F3,
$$
the surfaces $F$ are mapped to cubic surfaces. Hence the image of $\varphi_{\L}$ is the Veronese image of $\p^3$ via the cubics.

In the case $a=2, q=2, d=4, F^3=16$ the map $\varphi_1: X\dasharrow Y\subset  \p^4$ is birational, and $Y$ is a quadric. The surfaces $F$ are mapped to intersections of $Y$ with quadrics. Hence the image of $\varphi_{\L}$ is the Veronese image of a quadric in $\p^4$ via the quadrics. These correspond to types (vi) and (vii) in the list of  Theorem \ref {thm:main1}.
\end{remark}

\subsection{The case $s=1, q\geq 2, d=2$}

Since  $d=2$, the linear system $F^{(q-1)}$, cut out on $F$ by $|F+(q-1)(F+K_X)|$,  of dimension $2$ of curves of genus 1 is base point free and it determines a morphism $\psi: F\longrightarrow \p^2$ of degree $2$, hence  an involution $\iota$ on $F$. This implies that there is a congruence $\mathcal C$ of curves $C$ on $X$ such that:\\
\begin{inparaenum}
\item [$\bullet$] $F\cdot C=2$;\\
\item [$\bullet$] the linear system $|F+(q-1)(F+K_X)|$ determines a dominant map $\varphi_{q-1}: X\dasharrow \p^2$, whose general fibre is a curve in $\mathcal C$.
\end{inparaenum}

Note that
$$
2=F\cdot C=F\cdot \Big (\frac Fq\Big)^2=\frac {F^3}{q^2}
$$
hence
\begin{equation}\label{eq:lpot}
F^3=2q^2.
\end{equation}

\begin{proposition}\label{prop:rarat} In the case $s=1, q\geq 2, d=2$, $X$ is rational. 

\end{proposition}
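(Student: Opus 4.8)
The plan is to exploit the map $\varphi_{q-1}\colon X\dasharrow \p^2$ already produced in this subsection, which exhibits $X$ as (birational to) a conic bundle over $\p^2$, and then to prove that this conic bundle is rational. First I would record that its general fibre is a smooth rational curve. Let $C$ be a general member of the congruence $\mathcal C$, i.e.\ a general fibre of $\varphi_{q-1}$; by construction $F\cdot C=2$, so $\varphi_{\L}$ maps $C$ to a curve of degree $2$ in $\p^r$. Since $\varphi_{\L}\colon X\to V$ is birational and $C$ is general, $\varphi_{\L}$ is an isomorphism near $C$, hence $\varphi_{\L}|_C$ is birational onto its image; an irreducible, reduced curve of degree $2$ is a smooth plane conic, whence $C\cong \p^1$. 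By Lemma \ref{lem:bpf} the system defining $\varphi_{q-1}$ has no base curve, so its base locus is at most a finite set of points, and resolving it yields a morphism $g\colon \tilde X\to \p^2$ with general fibre $\p^1$, that is a conic bundle over the rational surface $\p^2$, with $\tilde X$ birational to $X$.

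Since the base $\p^2$ is rational, it then suffices to show that the generic fibre $\mathcal C_\eta$, a smooth conic over $K:=k(\p^2)$, has a $K$--rational point: in that case $g$ admits a rational section, $\tilde X$ is birational to $\p^1\times \p^2$, and $X$ is rational. Because a smooth conic possessing a closed point of odd degree is automatically split, it is in turn enough to produce on $X$ a multisection of $\varphi_{q-1}$ of odd degree, i.e.\ a Weil divisor meeting the general $C$ in an odd number of points. The tautological bisection is $F$ itself (as $F\cdot C=2$), which is even, so this is exactly where the finer structure of the situation has to be used.

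To extract an odd multisection I would bring in two features. First, $\rho(X)=1$, so that by Lemma \ref{lem:5bis} one has ${\rm Cl}(X)\cong \mathbb Z\cdot A$ torsion free, and the relation $(q+1)F+qK_X=0$ forces $F=q\alpha' A$ and $-K_X=(q+1)\alpha' A$ for some integer $\alpha'\ge 1$; thus every divisor on $X$ is proportional to the single generator $A$, which rigidifies the problem enormously. Second, the restriction $\varphi_{q-1}|_F\colon F\to \p^2$ is the anticanonical $2:1$ map of the degree--$2$ Del Pezzo surface $F$, i.e.\ $F$ is a rational double plane branched along a quartic. The aim is to use the generator $A$, together with the involution (Geiser) structure on the bisection $F$, to build a divisor class meeting $C$ in an odd number of points, equivalently to show that the Brauer class of $\mathcal C_\eta$ over $K$ vanishes.

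The hard part will be precisely this last step: rationality of a conic bundle over $\p^2$ is not automatic (there are non--rational examples of Artin--Mumford type), so one genuinely has to exploit the rigidity coming from $\rho(X)=1$ and from $F$ being a rational double plane to force the generic conic to split. In practice I expect this to be carried out by first pinning down the numerical invariants, in the spirit of Remark \ref{rem:spec} for the case $d\ge 3$: combining $F^3=2q^2$ from \eqref{eq:lpot} with the index relation $-K_X=\tfrac{q+1}{q}F$ and the bound on the Fano index of a $\mathbb Q$--Fano threefold with $\rho=1$ should constrain $q$ to a short explicit list of small values; one would then identify $X$ in each case as a specific $\mathbb Q$--Fano threefold, on which a rational section of the conic bundle — and hence rationality of $X$ — can be produced by hand.
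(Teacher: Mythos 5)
Your strategy is in principle the paper's own --- reduce to a conic bundle over a rational surface and find a (rational multi)section of odd degree --- and your caution that rationality of conic bundles over $\p^2$ is not automatic (Artin--Mumford) is well placed. But the proof stops exactly where the work begins: you never actually produce the odd multisection. Your last paragraph is a programme, not an argument: ``constrain $q$ to a short list, identify $X$ case by case, produce sections by hand'' is left entirely unexecuted, and the classification route it gestures at (pinning down $\mathbb Q$--Fano threefolds with $\rho=1$ of given index and degree) is far harder than what the statement needs. The paper closes the gap with a short numerical observation that your write-up misses: since $F+(q-1)(F+K_X)=\frac Fq$ in ${\rm Cl}(X)\otimes\mathbb Q$, a general member $D$ of the $2$--dimensional system defining $\varphi_{q-1}$ satisfies $D\cdot C=\frac 2q>0$ for every curve $C\in\mathcal C$; on the other hand $D$ is the pullback of a line in $\p^2$ while $C$ is contracted to a point, so for a general line $D\cap C$ would be empty wherever the map is defined. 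The positivity of $D\cdot C$ therefore forces the system $|F+(q-1)(F+K_X)|$ to have base points lying on \emph{every} curve of the congruence, i.e.\ all the conics pass through a common fixed point of $X$ --- and that fixed point is precisely the section whose existence you reduced the problem to. No case analysis in $q$ is needed for rationality (the finer identification of $q\in\{2,4\}$ and of the image varieties is done in the paper only afterwards, in Remark \ref{rem:more}).

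Two smaller points. First, the paper disposes separately of the case where the general $C\in\mathcal C$ is reducible: the two components each meet the ample $F$ in one point, and rationality follows at once; your argument instead deduces irreducibility and smoothness of $C$ from the claim that $\varphi_{\L}$ is an isomorphism near a general $C$, which is not justified --- the locus where a birational morphism fails to be an isomorphism can contain a divisor, and a general member of a two-dimensional covering family of curves may well meet a divisor. Second, your reduction via ``a conic with a closed point of odd degree is split'' is correct but heavier than necessary here: once all conics pass through one fixed point, one has a degree-one multisection directly.
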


\begin{proof}
If the curves in $\mathcal C$ are reducible, then they split in two components both intersecting $F$ (that is ample) in one point, and it is clear that $X$ is then rational. So let us assume that the general curve $C$ in $\mathcal C$ is irreducible, hence $X$ in this case has a birational structure of conic bundle over a rational surface. However we have
$$
\frac Fq \cdot C=\frac 2q>0
$$
hence, since $\dim (|F+(q-1)(F+K_X)|)=2$, the linear system $|F+(q-1)(F+K_X)|$ has some base point, which implies that all  curves of $\mathcal C$ pass through some fixed point of $X$. Then the aforementioned conic bundle has a section, hence it is rational. 
\end{proof}

\begin{remark}\label{rem:more} We can be more specific about the threefolds $X$ in this case, according to the value of $q$.\medskip

(a) {\bf Case $q\geq 4$.} The linear system $F^{(q-1)}$  of dimension $2$ of curves of genus 1 on $F$ is birationally equivalent to the linear system $(3;1^7)$ of plane cubics with 7 simple base points such that there is a smooth cubic curve through them. Then the linear system  
$$|2F+2(q-1)(F+K_X)|=|F+(q-2)(F+K_X)|$$ 
cuts out on $F$ a linear system that is birationally equivalent to the linear system $(6;2^7)$ of plane sextics with 7 double base points, that has dimension 6, self--intersection 8, and curves of genus 3. The image of $F$ via the morphism determined by this linear system is a surface $S$ of degree 8 in $\p^6$, that is the complete intersection of a cone $V$ over a Veronese surface of degree 4 in $\p^5$, with a quadric that does not pass through the vertex of $V$.
Note that the lines generating the cone $V$ cut out on $S$ the pairs of points of an involution that coincides with the involution $\iota$ mentioned at the beginning of this subsection. Then we have $\dim (|F+(q-2)(F+K_X)|)=6$. Moreover, we have
\begin{equation*}\label{eq:llppot}
F+(q-2)(F+K_X)=\frac {2F}q.
\end{equation*}
So
$$
(F+(q-2)(F+K_X))\cdot C=\frac {2F}q\cdot C=\frac 4q\leq 1.
$$
Since $|F+(q-2)(F+K_X)|$ cuts out on $S$ a birational linear system, we must have $(F+(q-2)(F+K_X))\cdot C\geq 1$, and this implies  $q=4$ and that the curves of the congruence $\mathcal C$ are mapped by $\varphi_2$ to lines that intersect $S$ in pairs of points conjugated in the involution $\iota$, so they are exactly the lines generating the cone $V$ as we saw above. This proves that the image of $X$ via $\varphi_2$ coincides with the cone $V$ over the Veronese surface in $\p^5$. Then the image on $V$ of the linear system $\L$ on $X$ is just the linear system cut out by the quadrics on $V$. This corresponds to case (viii) in the list of Theorem \ref {thm:main1}. \medskip

(b) {\bf Case $q=3$.} We will prove that this case is not possible. 

One has
$$
2F+4(F+K_X)=2F+K_X=\frac {2F}3
$$
hence $|2F+K_X|$ cuts out on $F$ a linear system that is birationally equivalent to the linear system $(6;2^7)$ as in case (a) and so $\dim  (|2F+K_X|)=6$. Consider the rational map $\varphi_1: X\dasharrow \p^6$ determined by $|2F+K_X|$ and let $V'$ be its image in $\p^6$. Since $F$ is birationally mapped by $\varphi_1$  
to a   surface $S$ of degree $8$ as in case (a), $V'$ is at least a surface. We claim it must be a threefold. Suppose, by contradiction, $V'$ is  a surface and let $\Gamma$ be a general fibre of $\varphi_1: X\dasharrow V$. Since $F$ is mapped birationally to $V'$ by $\varphi_1$, we must have $F\cdot \Gamma=1$, so that $\Gamma$ should be rational. On the other hand, since
$$
2(2F+K_X)+K_X=0,
$$
the characteristic system of $|2F+K_X|$ should be composed with a pencil 
 of curves with arithmetic genus 1, a contradiction.
 
Since
$$
\frac {2F}3\cdot C=\frac 43,
$$
there is some base point of $|2F+K_X|$ sitting on all curves of the congruence $\mathcal C$, that are therefore mapped by $\varphi_1$ to lines, and precisely to the lines joining the pairs of points of $S$ conjugated by the involution $\iota$. As in case (a), this would imply that the image $V'$ of $\varphi_1$ is the cone $V$ over the Veronese surface in $\p^5$, on which $S$ is cut out by a quadric (see case (a)). This would yield the numerical equivalence 
$$
F\equiv 2\cdot (2F+K_X)\equiv  \frac {4F}3
$$
on $X$, a contradiction. \medskip

(c) {\bf Case $q=2$.} In this case we have $\dim(|F|)=7$ and $F^3=8$, and therefore the image $Y$ of $X$ via the morphism $\varphi_\L$  sits in $\p^7$ and its general general hyperplane section is the surface $S$ of degree 8 in $\p^6$ we encountered already in (a) above, that is the complete intersection of a cone $V$ over a Veronese surface of degree 4 in $\p^5$, with a quadric that does not pass through the vertex of $V$. The threefold $Y$ has a congruence $\mathcal C$ of conics. Let us consider the 4--dimensional variety $W\subset \p^7$ that is swept out by the planes of the conics in $\mathcal C$. The general hyperplane section
 of $W$ is exactly the cone $V$, hence $W$ itself is the cone with vertex a line $\ell$ over a Veronese surface of degree 4 in $\p^5$. The variety $Y$ is the intersection of $W$ with a quadric $Q$ that does not contain the line $\ell$. It has two quadruple points at the intersection of $Q$ with $\ell$. This corresponds to case (ix) in the list of Theorem \ref {thm:main1}. 
 
By projecting $Y$ from one of the quadruple points, $Y$ maps birationally to the cone $V$ over the Veronese surface. 
 \end{remark}
 
 \subsection{The case $s=d=1, q\geq 2$}

 In this case the linear system $F^{(q-1)}$, cut out on $F$ by $|F+(q-1)(F+K_X)|$,  is a pencil of curves of genus 1, that is birationally equivalent to the linear system $(3;1^8)$ of plane cubics with 8 simple base points, that has a further simple base point. Hence the one--dimensional base locus of $|F+(q-1)(F+K_X)|$ is a curve $\Gamma$ that is cut out by $F$ in one point. Thus one has
$$
1=F\cdot \Gamma=F\cdot \Big (\frac Fq\Big)^2=\frac {F^3}{q^2}
$$
hence
\begin{equation*}\label{eq:lp}
F^3=q^2.
\end{equation*}

The linear system $|F+(q-2)(F+K_X)|$ cuts out on $F$ a linear system that is birationally equivalent to the linear system $(6;2^8)$ of plane sextics with 8 double base points. This system is base point free, has dimension 3 and it determines a $2:1$ morphism $\eta: F\longrightarrow \Sigma\subset \p^3$ where $\Sigma$ is a quadric cone whose line generators are images of the cubics in $(3;1^8)$. Hence  we have an involution $\iota$ on $F$. Note that  $q=2$ is not possible, since in that case we would have that $|F+(q-2)(F+K_X)|=|F|=\L$ would determine a non--birational map of $X$ to its image, and this is not possible by hypothesis. Moreover there is a congruence $\mathcal C$ of curves $C$ on $X$ such that:\\
\begin{inparaenum}
\item [$\bullet$] $F\cdot C=2$;\\
\item [$\bullet$] the linear system $|F+(q-2)(F+K_X)|$, that has dimension 3,  determines a dominant map $\varphi_{q-2}: X\dasharrow \Sigma\subset \p^3$, whose general fibre is a curve in $\mathcal C$.
\end{inparaenum}

 \begin{proposition}\label{prop:rarat2} In the case $s=d=1, q\geq 3$, $X$ is rational. 

\end{proposition}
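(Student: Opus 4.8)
The plan is to mirror the conic--bundle argument of Proposition \ref{prop:rarat}, adapted to the degree one situation. Here the system $|F+(q-2)(F+K_X)|$, which has dimension $3$ and is numerically equivalent to $\frac{2F}{q}$, defines the dominant map $\varphi_{q-2}\colon X\dasharrow \Sigma\subset \p^3$ onto the quadric cone $\Sigma$, whose general fibre is a curve $C$ of the congruence $\mathcal C$ with $F\cdot C=2$. The first thing I would record is that $\Sigma$ is rational: it is a quadric cone, and equivalently it is the image of the (rational) degree one Del Pezzo surface $F$ under the degree $2$ bi-anticanonical morphism $\eta$. As in the degree two case, I would then split according to whether the general $C$ is reducible or irreducible.

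If the general $C$ is reducible, then since $F\cdot C=2$ and $F$ is ample, $C$ splits as $C_1+C_2$ with $F\cdot C_i=1$. Each family of components consists of unisecant rational curves, and the assignment $C_i\mapsto C_i\cap F$ parametrizes this family birationally by the rational surface $F$ (through a general point of $F$ passes exactly one such component). Hence $X$ is birational to $F\times\p^1$ and is rational, in agreement with the general shape of the theorem.

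The substantial case is that of irreducible $C$, where $X$ acquires a conic--bundle structure over the rational surface $\Sigma$. Here I would exploit positivity exactly as in Proposition \ref{prop:rarat}: since $\frac{2F}{q}\cdot C=\frac{4}{q}>0$ while $\varphi_{q-2}$ contracts the general fibre $C$ to a point, the system $|F+(q-2)(F+K_X)|$ cannot be base point free, and its base locus must meet every curve of $\mathcal C$. As before, this forces all curves of the congruence to pass through a fixed locus, which produces a section of the conic bundle. A conic bundle admitting a section over a rational base being rational, we conclude that $X$ is rational.

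The main obstacle I anticipate is the construction of the section from the forced base points. Positivity of $\frac{2F}{q}\cdot C$ guarantees base points on every fibre, but one must check that these assemble into an honest section, meeting the general fibre in a single point, rather than into a higher degree multisection. This is delicate precisely at the vertex $v$ of the cone $\Sigma$: the base curve $\Gamma$ of the pencil $|F+(q-1)(F+K_X)|=|\frac{F}{q}|$, which satisfies $F\cdot\Gamma=1$, is contracted by $\varphi_{q-2}$ onto $v$, so $\Gamma$ itself does not serve as a section. The cleanest route I see is to verify that, away from $v$, the base locus of $|F+(q-2)(F+K_X)|$ meets the general fibre transversally in one point, and then to resolve the indeterminacy over $v$; establishing this transversality is the technical heart of the argument.
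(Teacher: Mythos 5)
Your proposal is correct in substance, but in the crucial irreducible case it takes a genuinely different route from the paper. You transplant the argument of Proposition \ref{prop:rarat} (the $d=2$ case) to the web $|F+(q-2)(F+K_X)|\equiv \frac{2F}{q}$: since $\frac{2F}{q}\cdot C=\frac{4}{q}>0$ while $\varphi_{q-2}$ contracts $C$, the general member meets the general conic only along the base locus, which is therefore nonempty and finite (the trace $(6;2^8)$ on the ample $F$ being base point free), so by irreducibility of the congruence all conics pass through a fixed point. The paper instead works with the \emph{pencil} $|F+(q-1)(F+K_X)|$ and its base curve $\Gamma$ with $F\cdot\Gamma=1$: it computes $\Gamma\cdot\frac{2F}{q}=\frac{2}{q}<1$, so $\Gamma$ must meet the general surface $S\in|F+(q-2)(F+K_X)|$, but a fractional local intersection number is impossible at a smooth point of $X$, hence the intersection occurs at a \emph{singular} point of $X$; since $\Gamma$ is in the base locus of the pencil, and the pencil cuts on $S$ a pencil of curves of $\mathcal C$, all conics pass through that singular point. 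The two arguments buy different things: yours is shorter and uniform with the $d=2$ case, and note that your $\frac{4}{q}$ can be $\geq 1$ for $q=3,4$, so it could never certify singularity of the fixed point; the paper's $\frac{2}{q}<1$ trick yields for free that the fixed point is a singular point of $X$, information consonant with the finer projective analysis in Remark \ref{rem:jop}. Finally, the ``technical heart'' you flag --- that the fixed point gives an honest section rather than a multisection --- is present to exactly the same extent in the paper, which asserts the section without further argument; a classical way to dispose of it, cleaner than your transversality check at the vertex, is to project the conics from the fixed point: the conic bundle becomes a fibration in degree-one curves, and a geometrically integral curve of degree one is a line defined over the function field of the base, hence isomorphic to $\p^1$ over it, giving rationality directly. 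So there is no gap relative to the paper's own standard of rigor.
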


\begin{proof} 
If the curves in $\mathcal C$ are reducible, then they split in two components both intersecting $F$ in one point, and it is clear that $X$ is then rational. So let us assume that the general curve $C$ in $\mathcal C$ is irreducible, hence $X$ has a birational structure of conic bundle over a rational surface. We want to prove that this conic bundle has a section, hence it is rational. To see this, we proceed as follows.

On the general surface of $|F+(q-2)(F+K_X)|$, the pencil $|F+(q-1)(F+K_X)|$ cuts out a pencil of curves lying in $\mathcal C$, and the characteristic system of $|F+(q-2)(F+K_X)|$ on a general surface in it consists of pairs of curves lying in this pencil, hence in $\mathcal C$.  One has
$$
\Gamma\cdot (F+(q-2)(F+K_X))= \Gamma\cdot \frac {2F}q=\frac 2q<1
$$
which implies that $\Gamma$ and the general surface of $F+(q-2)(F+K_X)$ must intersect, but do not intersect at a smooth point of $X$, so they have to intersect at some singular point of $X$. On the other hand the intersection points of $\Gamma$ and the general surface of $|F+(q-2)(F+K_X)|$ are also base points of the pencil cut out by $|F+(q-1)(F+K_X)|$ on the general surface of $|F+(q-2)(F+K_X)|$. Since, as we saw, this pencil consists of curves in $\mathcal C$, we see that all curves in $\mathcal C$ must pass through some singular point of $X$, which proves that the aforementioned conic bundle has a section, proving our assertion.
\end{proof}

\begin{remark}\label{rem:jop} As usual we can be more specific about the threefolds $X$ in this case. We suppose that the image of $X$ via $\varphi_\L$ is not a scroll in lines over a rational surface, so that the curves in $\mathcal C$ are generically irreducible. \medskip

(a) {\bf Case $q\geq 4$.} We consider the linear system $|F+(q-3)(F+K_X)|$ that cuts out on $F$ a linear system that is birationally equivalent to the linear system $(9;3^8)$ of plane curves of degree 9 with 8 triple points. This linear system has self--intersection 9, dimension 6, is base point free and birational, so $F$ is mapped via the rational map $\varphi_{q-3}: X\dasharrow \p^6$ to a surface $S$ of degree 9. The surface $S$ has a pencil $\mathcal P$ of plane cubics that are the images of the curves in $(3;1^8)$, that has a base point. Hence the pencil $\mathcal P$ has a base point $p$. The projection of $S$ from $p$ is not birational but $2:1$, and its image is a rational normal scroll $\Sigma$ of degree 4 in $\p^5$, with a line directrix. Thus $F$ sits on the degree 4 cone $V$ over $\Sigma$ with vertex $p$. Note that  the involution $\iota$ on $F$ reads on $S$ as the involution determined by the $2:1$ cover $S\longrightarrow \Sigma$. So the pairs of points conjugated in the involution $\iota$ are cut out on $S$ by the lines through $p$, and on each plane cubic of $\mathcal P$ they  are cut out by the lines through $p$. Therefore these lines fill up the cone $V$.

Next we want to show that the the image of the map $\varphi_{q-3}$ has dimension 3. We argue by contradiction and suppose this is not the case. Since $F$ is birationally mapped to a surface $S$, then the image of $X$ via $\varphi_{q-3}$ must coincide with $S$. Let $\Gamma$ be a general fibre of $\varphi_{q-3}: X\dasharrow S$. Then one must have $\Gamma\cdot F=1$. Note  that 
$-(F+K_X)=F+(q-1)(F+K_X)$ (see Lemma \ref {lem:5}, (i) and (ii)), so that $-(F+K_X)$ is effective. Hence  $|F+(q-3)(F+K_X)|$ contains $|F+(q-2)(F+K_X)|$ with residual $|-(F+K_X)|$. This implies that the map $\varphi_{q-2}:X\dasharrow \Sigma\subset \p^3$ factors through the map $\varphi_{q-3}: X\dasharrow S\subset \p^6$ and a projection $\p^6\dasharrow \p^3$ (actually this must be the projection from a plane spanned by a cubic curve of the pencil $\mathcal P$). But this is impossible since the general fibre $C$ of $\varphi_{q-2}$ is irreducible such that $C\cdot F=2$ whereas the general fibre $\Gamma$ of $\varphi_{q-3}$ is such that $\Gamma\cdot F=1$. 

Next we claim that the image of $X$ via $\varphi_{q-3}$ is just the cone $V$ we met before. To see this, we notice that
$$
(F+(q-3)(F+K_X))\cdot C=\frac {3F}q\cdot C=\frac 6q<2.
$$
This implies that the surfaces in $|(F+(q-3)(F+K_X))|$ intersect the curves in $\mathcal C$ in the base points of this congruence, and in one point off these base points. Hence the curves in $\mathcal C$ are mapped to lines by the map $\varphi_{q-3}$. These are exactly the lines that join pairs of points on $S$ conjugated in the involution $\iota$ and, as we saw above, these lines fill up the cone $V$, as wanted.

Finally we prove that $\varphi_{q-3}: X\dasharrow V$ is birational. Indeed, if $m$ is the degree of this map, we have
$$
\frac {27}q=\Big( \frac {3F}q\Big)^3\geq	4m 
$$
which, since $q\geq 4$, forces $m=1$. 

This yields $q=6$. Indeed, since $V$ has rational curve sections, it cannot be $q\leq 5$, because
$$
F+(q-3)(F+K_X)=\frac {3F}q
$$
and 
\begin{equation}\label{eq:ghorg}
\frac {3F}q+\frac {3F}q+K_X=-\frac {q-5}qF
\end{equation}
which is non--negative if $q\leq 5$. 
 
On the other hand we must have
$$
4\leq \Big( \frac {3F}q\Big)^3= \frac {27}q
$$
so that $q\leq 6$ thus $q=6$. Therefore $F^3=36$ and the characteristic system of $|F|$ on $S$ is birationally equivalent to $(18;6^8)$ that has dimension $21$. So $\dim(|F|)=22$. 

Now notice that
$$
\frac F6\cdot \Big (\frac F2\Big)^2=\frac 32<2.
$$
This implies that $\varphi_{q-3}$ maps the surfaces in the pencil $|F+5(F+K_X)|$ to planes, i.e., exactly to the planes that sweep out the cone $V$. Now
$$
F+3(F+5(F+K_X))=3(F+3(F+K_X))
$$
(see again Lemma \ref {lem:5}, (i) and (ii)), and this implies that $S$ plus three planes generating $V$ are cut out on $V$ by a cubic. In conclusion the image $Y$ of the morphism $\varphi_\L: X\longrightarrow \p^{22}$  is a variety of degree $36$, that is also the image of the cone $V$ via the map determined by the linear system of cubic hypersurface section containing three plane generators of $V$. This is the variety in case (x) of the list of Theorem \ref {thm:main1}. \medskip

(b) {\bf Case $q= 3$.} In this case the linear system $|(F+(q-3)(F+K_X))|$ coincides with $|F|$. We have $F^3=9$ and $\dim(|F|)=7$. The linear system determines a morphism $\varphi_\L: X\longrightarrow Y\subset \p^7$ whose image $Y$ has degree 9, and its general hyperplane section is the surface $S$ that we met  in the discussion of case (a). 

We have
$$
(F+2(F+K_X))\cdot F^2=\frac F3\cdot F^2=3
$$
which implies that the surfaces of the pencil $|F+2(F+K_X)|$ are mapped via $\varphi_\L$ to a pencil $\mathcal R$ of cubic surfaces with elliptic hyperplane sections, so to cubic surfaces in $\p^3$. Remember that $|F+2(F+K_X)|$ has a base curve $\Gamma$ that is mapped by $\varphi_\L$ to a line $\ell$. The family of 3--dimensional subspaces spanned by the cubic surfaces in $\mathcal R$, sweep out a 4--dimensional rational normal cone $T$ of degree 4 containing $Y$. Consider a general hyperplane section $S$ of $Y$. It passes through the intersection point of $\ell$ with the hyperplane $\pi$ cutting out $S$, and $S$ sits on $V$, the intersection of $T$ with $\pi$, that, as we know, is the cone over a quartic rational normal scroll surface $\Sigma\subset \p^5$ with a line directrix. Hence $T$ is the cone with vertex $\ell$ over $\Sigma$. It is then clear that $Y$ is cut out on $T$ by a cubic hypersurface containing three 3--dimensional subspaces generating $T$. This is the variety in case (xi) of the list of Theorem \ref {thm:main1}.
\end{remark}

\subsection{The case $s=2$}
In this case $F\cong \p^1\times \p^1$ and  
$$
K_X= -\frac {q+2}q F
$$
in ${\rm Pic}(X)\otimes _\mathbb Z \mathbb Q$, hence
$$
K_F=K_X+F_{|F}= -\frac {2F_{|F}}q
$$
and therefore 
$$
8=K_F^2= \frac {4F^3}{q^2}
$$
so 
$$
F^3=2q^2. 
$$
\begin {proposition}\label{prop:ccclll} If $s=2$ and $q\geq 3$, then there is a congruence $\mathcal C$ of curves $C$ on $X$ such that $F\cdot C=1$, so that $X$ is rational and the image of $X$ via the map $\varphi_\L$ falls in case (iv) of Theorem \ref {thm:main1}. 
\end{proposition}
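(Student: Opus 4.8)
The plan is to use the very special structure of $F\cong\p^1\times\p^1$ to produce, via the generator of the class group of $X$, a rational map collapsing $X$ onto a rational surface whose fibres are the sought congruence.

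First I would fix the numerics. In the case $s=2$ one has $F\cong\p^1\times\p^1$, $F_{|F}=\mathcal O_{\p^1\times\p^1}(q,q)$ and $F^3=2q^2$. Since $X$ is $\mathbb Q$--factorial with $\rho(X)=1$ and, by Lemma \ref{lem:5bis}, $\mathrm{Cl}(X)$ is torsion free, $\mathrm{Cl}(X)\cong\mathbb Z$; let $H_0$ be its ample generator and write $F=kH_0$ and $-K_X=cH_0$ with $k,c$ positive integers. Restricting to $F$ gives $H_{0|F}=\mathcal O(q/k,q/k)$, so $k\mid q$; set $j:=q/k\ge 1$, whence $H_{0|F}=\mathcal O(j,j)$. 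The relation $(q+2)F+qK_X=0$ in $\mathrm{Cl}(X)$ (Lemma \ref{lem:5}(i) and Lemma \ref{lem:5bis}) reads $(q+2)k=qc$, i.e. $c=(q+2)/j$; since $c\in\mathbb Z$ and $j\mid q$, this forces $j\mid 2$, so $j\in\{1,2\}$. Moreover $q\ge 3$ excludes $k=1$, so $k\ge 2$.

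Next I would analyse the rational map $\varphi:=\varphi_{|H_0|}\colon X\dasharrow\p^N$. From the exact sequence $0\to\mathcal O_X(H_0-F)\to\mathcal O_X(H_0)\to\mathcal O_F(H_{0|F})\to 0$, in which $H_0-F=(1-k)H_0$, together with the vanishing $h^1(X,\mathcal O_X((1-k)H_0))=0$ of Lemma \ref{lem:6}, the system $|H_0|$ cuts out on $F$ the complete system $|\mathcal O(j,j)|$; since $j\ge 1$, this embeds $F$ as a surface of degree $2j^2$ in $\p^N$ with $N=(j+1)^2-1\in\{3,8\}$. The crucial step is to show that the image $W:=\varphi(X)$ is a \emph{surface}. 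As $H_0=\tfrac1k F$ is ample (nef equals ample because $\rho(X)=1$) one has $H_0^3=2j^2/k$, and $H_0^3\ge M^3$ for the mobile part $M$ of $|H_0|$. Were $\dim W=3$, then $\varphi$ would be generically finite, so $M^3=\deg(\varphi)\deg(W)\ge\deg W$, while $W$ is non-degenerate in $\p^N$ (being the image of a complete linear system) and hence $\deg W\ge N-2$. For $j=2$ this gives $\deg W\ge 6$ against $H_0^3=8/k\le 4$; for $j=1$ the image would be all of $\p^3$, of degree $1$, against $H_0^3=2/q\le 2/3$. Both are impossible, so $\dim W=2$.

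Since $\varphi|_F$ is the embedding of $F$ by $|\mathcal O(j,j)|$, it follows that $W=\varphi(F)\cong\p^1\times\p^1$ is rational and that $\deg(\varphi|_F)=1$; hence the general fibre $C$ of $\varphi$ meets the general $F$ in a single point, i.e. $F\cdot C=1$, and these fibres form a congruence $\mathcal C$ parametrized by $W$. As $\L\cdot C=F\cdot C=1$, the map $\varphi_\L$ carries each $C$ isomorphically onto a line, and the $2$--dimensional family of these lines sweeps out $V$, so $V$ is a scroll in lines over a surface birational to $W$, that is, case (iv) of Theorem \ref{thm:main1}. Finally, $W$ is rational and $C\cong\p^1$ with $F\cdot C=1$, so $X$ is birational to $W\times\p^1$ and therefore rational. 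The main obstacle is exactly the dimension count for $W$: the self-intersection bound by itself only yields $\deg(\varphi)\le 1$ and does not exclude a birational threefold image; what makes the argument go through is combining $H_0^3\ge M^3$ with the minimal-degree inequality $\deg W\ge\operatorname{codim} W+1$, crucially using $k\ge 2$ (equivalently $q\ge 3$) and the vanishing of Lemma \ref{lem:6}.
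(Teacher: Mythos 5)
Your proof is correct, and while its skeleton is the same as the paper's (map $X$ to a surface by a fractional multiple of $F$, exclude a three--dimensional image numerically, identify the image with an embedding of $F$, and read the unisecant congruence off the fibres), the linear system you use is genuinely different. The paper works uniformly with $|-F-K_X|=\bigl|\frac{2F}{q}\bigr|$, which cuts out on $F$ the complete anticanonical system $|\O_{\p^1\times\p^1}(2,2)|$ and maps $X$ to $\p^8$; it rules out a birational threefold image by observing that the characteristic system consists of rational curves, so such an image would be a threefold of minimal degree $6$ in $\p^8$, against $\bigl(\frac{2F}{q}\bigr)^3=\frac{16}{q}<6$ for $q\geq 3$, and then concludes that the image is the anticanonical surface $S\cong F$. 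You instead pass to the generator $H_0$ of ${\rm Cl}(X)\cong\mathbb Z$ and find $H_{0|F}=\O(j,j)$ with $j\in\{1,2\}$; note that your branch $j=2$ is in fact vacuous, since it forces $q=2k$ even, whereas Lemma \ref{lem:5}(i) requires $\gcd(q+2,q)=1$, i.e.\ $q$ odd — so in the only occurring case your system $|H_0|=\bigl|\frac Fq\bigr|$ is ``half'' of the paper's and maps to $\p^3$ rather than $\p^8$ (for $j=2$ it would coincide with $|-F-K_X|$). Your route buys two small improvements: the dimension count for $W$ uses only nondegeneracy ($\deg W\geq \operatorname{codim} W+1$) instead of the rational--curve--sections observation, and it explicitly excludes generically finite maps of degree $\geq 2$ onto a threefold, a case the paper's ``not birational, hence the image is $S$'' leaves implicit (though it is killed by the same numerics). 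Two caveats: the identification ${\rm Cl}(X)\cong\mathbb Z$ needs finite generation of the class group — standard for terminal $\mathbb Q$--Fano threefolds with $\rho=1$ and consistent with Lemma \ref{lem:5bis}, but avoidable: from $(q+2)F+qK_X=0$ and B\'ezout one gets $F=q(vF-uK_X)$ in ${\rm Cl}(X)$, which already provides the class $\frac Fq$ and lets your $j=1$ argument run verbatim; and the inequality $H_0^3\geq M^3\geq \deg\varphi\cdot\deg W$ is asserted rather than proved, but here $|H_0|$ has no fixed component and only finitely many base points (its restriction to the ample surface $F$ is base point free), so the standard resolution argument applies — and the paper invokes the same inequality with no more justification.
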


\begin{proof} We have
$$
-F-K_X=\frac {2F}q
$$
hence $\dim(|-F-K_X|)=8$ and since
$$
\frac {2F}q+\frac {2F}q+K_X=-\frac {q-2}qF
$$
is negative, then the characteristic system of $|-F-K_X|$ consists of rational curves. If the map $\varphi$ determined by $|-F-K_X|$ is birational to its image in $\p^8$, then this image has degree 6. Then we must have
$$
 \frac {16}q=\Big(\frac {2F}q\Big)^3\geq 6
$$
which yields $q\leq 2$, a contradiction. So the map $\varphi$ is not birational to its image in $\p^8$. Since $|-F-K_X|$ cuts out on $F$ the complete anticanonical system, then the image of $X$ via $\varphi$ is the anticanonical image $S$ of $F$. Let $C$ be the general fibre of $\varphi: X\longrightarrow S$. Since the restriction of $\varphi$ to $F$ is an isomorphism of $F$ with $S$, then $F\cdot C=1$, and the assertion follows. 
\end{proof}

\begin{remark}\label{rem:lhj} The cases $s=2$ and  $1\leq q\leq 2$ do occur and give rise to rational threefolds. If $q=1$, then $F^3=2$ and the image of $X$ via the morphism $\phi_\L$ is  a quadric threefold in $\p^4$, which falls in case (i) of Theorem \ref {thm:main1}. If $q=2$, then $-F-K_X=F$, so the characteristic linear series of $|F|$ is the anticanonical system of $F$ (consisting of elliptic curves), and $F^3=8$. So the image $V$ of $X$ via $\varphi_\L$ is a variety of dimension 3 and degree 8 in $\p^9$ with elliptic curve sections. Then the considerations of \S \ref {ssec:ell} apply. In particular, if $V$  is smooth, it is the 2--Veronese image of $\p^3$ (see case (v7) of Theorem \ref {thm:main1}).
\end{remark}
 
\subsection{The case $s=3$}

In this case $F\cong \p^2$ and  
$$
K_X\equiv -\frac {q+3}q F
$$
hence
$$
K_F=K_X+F_{|F}\equiv -\frac {3F_{|F}}q
$$
and therefore 
$$
9=K_F^2= \frac {9F^3}{q^2}
$$
so 
$$
F^3=q^2. 
$$

\begin {proposition}\label{prop:colll} If $s=3$ and $q\geq 4$, then there is a congruence $\mathcal C$ of curves $C$ on $X$ such that $F\cdot C=1$, so that $X$ is rational and the image of $X$ via the map $\varphi_\L$ falls in case (iv) of Theorem \ref {thm:main1}. 
\end{proposition}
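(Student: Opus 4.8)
The plan is to mirror the proof of Proposition \ref{prop:ccclll} (the case $s=2$), substituting the anticanonical model of $\p^2$ for that of $\p^1\times\p^1$. Since $s=3$ we have $F\cong\p^2$ and $K_X\equiv-\frac{q+3}{q}F$, whence
$$
-F-K_X=\frac{3F}{q}.
$$
First I would pin down the system $|-F-K_X|$. From the restriction sequence
$$
0\longrightarrow\O_X(-2F-K_X)\longrightarrow\O_X(-F-K_X)\longrightarrow\O_F(-K_F)\longrightarrow 0
$$
Lemma \ref{lem:6} gives $h^1(X,\O_X(-2F-K_X))=0$, so $|-F-K_X|$ cuts out on $F$ the complete anticanonical system $|-K_F|=|\O_{\p^2}(3)|$ of plane cubics. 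Moreover $-2F-K_X=\frac{3-q}{q}F$ is a strictly negative multiple of the ample class $F$ when $q\geq 4$, so $|-2F-K_X|=\emptyset$ and $\dim|-F-K_X|=\dim|\O_{\p^2}(3)|=9$. Thus the rational map $\varphi:X\dasharrow\p^9$ attached to $|-F-K_X|$ restricts on the general $F$ to the $3$-Veronese embedding, an isomorphism of $F$ onto the Veronese surface $S\subset\p^9$ of degree $9$.

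Next I would exclude a three-dimensional image. If $\dim\varphi(X)=3$, then $\varphi(X)$ is a non-degenerate threefold in $\p^9$, hence of degree at least $9-3+1=7$; on the other hand, using $F^3=q^2$,
$$
\deg\varphi(X)=\frac{1}{\deg\varphi}\,(-F-K_X)^3\leq(-F-K_X)^3=\frac{27F^3}{q^3}=\frac{27}{q}\leq\frac{27}{4}<7,
$$
a contradiction. (This is the exact analogue of the bound $\frac{16}{q}\geq 6$ used for $s=2$.) Hence $\dim\varphi(X)=2$, and since the irreducible surface $\varphi(F)=S$ is contained in the irreducible $\varphi(X)$, we get $\varphi(X)=S\cong\p^2$.

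It then remains to read off the congruence. Because the restriction of $\varphi$ to $F$ is an isomorphism onto $S$, the fibre of $\varphi:X\dasharrow S$ through a general point of $S$ meets the general member $F$ in exactly one point; as $\L=|F|$ is base point free, this set-theoretic count is the intersection number, so the general fibre $C$ satisfies $F\cdot C=1$, and these fibres form a two-dimensional family, i.e.\ a congruence $\mathcal C$ with $F\cdot C=1$. Since $\L\cdot C=F\cdot C=1$, the map $\varphi_\L$ carries each $C$ birationally onto a line, so $V=\varphi_\L(X)$ is a scroll in lines over the rational surface $S$, placing it in case (iv) of Theorem \ref{thm:main1}; in particular $X$ is birational to $S\times\p^1$ and hence rational.

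The only delicate point is the passage from $\dim\varphi(X)=2$ to $F\cdot C=1$: one must know that the restriction of $\varphi$ to $F$ is genuinely an isomorphism (true, since $-K_{\p^2}=\O_{\p^2}(3)$ is very ample) and that a general $F\in\L$, being a member of the base-point-free system $\L$, meets the general fibre transversally in reduced points, so that the single set-theoretic intersection point yields intersection number $1$. Everything else is a direct numerical transcription of the $s=2$ argument.
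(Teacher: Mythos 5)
Your proof is correct and follows essentially the same route as the paper's: both reduce to the map $\varphi$ given by $|-F-K_X|=\frac{3F}{q}$, rule out a three-dimensional image via the degree bound $\frac{27}{q}<7$ in $\p^9$, identify the image with the anticanonical model $S$ of $F\cong\p^2$, and deduce $F\cdot C=1$ for the general fibre. Your only deviations are cosmetic improvements: you justify $\dim|-F-K_X|=9$ via the restriction sequence and Lemma \ref{lem:6}, and you use the general bound $\deg\geq\codim+1$ (which also handles a non-birational map onto a threefold), where the paper invokes rational curve sections to pin the would-be degree at exactly $7$.
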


\begin{proof} We have
$$
-F-K_X=\frac {3F}q
$$
hence $\dim(|-F-K_X|)=9$ and since
$$
\frac {3F}q+\frac {3F}q+K_X=-\frac {q-3}qF
$$
is negative, then the characteristic system of $|-F-K_X|$ consists of rational curves. If the map $\varphi$ determined by $|-F-K_X|$ is birational to its image in $\p^9$, then this image has degree 7. So we must have
$$
 \frac {27}q=\Big(\frac {3F}q\Big)^3\geq 7
$$
which yields $q\leq 3$, a contradiction. So the map $\varphi$ is not birational to its image in $\p^9$. Since $|-F-K_X|$ cuts out on $F$ the complete anticanonical system, then the image of $X$ via $\varphi$ is the anticanonical image $S$ of $F$. Let $C$ be the general fibre of $\varphi: X\longrightarrow S$. Since the restriction of $\varphi$ to $F$ is an isomorphism of $F$ with $S$, then $F\cdot C=1$, and the assertion follows. 
\end{proof}

\begin{remark}\label{rem:lhj} The cases $1\leq q\leq 2$ do occur and give rise to rational threefolds. If $q=1$, then $F^3=1$ and $X\cong \p^3$. If $q=2$, then $F^3=4$ and the image $V$ of $X$ via the map $\phi_\L$ has as general hyperplane sections Veronese surfaces, so that $V$ is the cone over the Veronese surface in $\p^6$, which falls in case (i) of Theorem \ref {thm:main1}. 

Let us look at the case $q=3$. Here the characteristic linear series of $|F|$ consists of elliptic curves,  $F^3=9$ and the image $V$ of $X$ via the map $\phi_\L$ has as general hyperplane sections the Veronese surfaces image of the plane via the cubics. Then, as we saw in Remark \ref {rem:fano2}, the only possibility is that $V\subset \p^{10}$ is a cone over the Veronese surface image of the plane via the cubics.\end{remark}

\section{Proof of Theorem \ref {thm:main2}}\label{sec:thm2}

In this section we will prove Theorem \ref {thm:main2}. 
Throughout this section $V\subset \p^{r}$, with $r\geq n+1$, will be an irreducible, projective, non--degenerate, linearly normal variety of dimension $n\geq 4$, such that its general surface linear section is rational. If $V$ is a cone, to prove Theorem \ref {thm:main2} one easily proceeds by induction on the dimension $n$ of $V$. So we may assume that $V$ is not a cone. 
In what follows we will denote by $W$ [resp. by $F$] the general threefold [resp. surface] section of $V$. 

\subsection{Varieties of minimal degree}\label{ssec:ratcurv}

First of all we have the trivial case in which $W$ falls in case (i) of the list of Theorem \ref {thm:main1}. Then $V$ is a variety of minimal degree and therefore it is either a  quadric in $\p^{n+1}$, or a rational normal scroll or a cone in $\p^{n+3}$ with vertex a linear space of dimension $n-3$ over the Veronese surface of degree 4 in $\p^{5}$.  In any case it is rational.

\subsection{Genus one curve sections}\label{ssec:ratcurv}

Next we consider the case in which the general curve section of $V$ has genus 1, i.e., $W$ falls in case (v) of  the list of Theorem \ref {thm:main1}. 

\begin{proposition}\label{prop:ell} If $V\subseteq \p^r$ as above has general curve section of genus 1 and it is not a cone, then either it is singular, in which case it can have only double points, or it is smooth, and in this case it is one of the following:\\
\begin{inparaenum}
\item [(1)] a cubic hypersurface in $\p^{n+1}$;\\
\item [(2)] the complete intersection of two quadrics in $\p^{n+2}$;\\
\item [(3)] the Grassmannian $\mathbb G(1,4)\subset \p^9$ of lines in $\p^4$, or a section of it with a hyperplane or with a subspace of codimension 2, in which case $4\leq n\leq 6$;\\
\item [(4)] the Segre embedding of $\p^2\times \p^2$ in $\p^8$.
\end{inparaenum}

In all cases $V$ is rational except, may be, if it is a smooth cubic hypersurface in $\p^{n+1}$
\end{proposition}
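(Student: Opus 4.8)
The plan is to reduce the problem to the threefold classification of Theorem \ref{thm:main1}, case (v), and then to lift the birational/projective structure from the general threefold section $W$ to $V$ itself by a dimension induction. First I would record that, since the general curve section has genus $1$, the same holds for every intermediate linear section, so the general threefold section $W$ lies in case (v) of Theorem \ref{thm:main1}. Thus $W$ has degree $d$ with $3\leq d\leq 8$ and spans a $\p^{d+1}$, which forces $V$ to have degree $d$ and to span $\p^{d+1+(n-3)}=\p^{d+n-2}$; in particular $r=d+n-2$. This numerology already bounds the degree and codimension of $V$ uniformly, which is the structural backbone of the argument.

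Next I would treat the singular and smooth cases separately, mimicking the threefold analysis of Lemmas \ref{lem:op} and \ref{lem:op1}. If $V$ is singular and not a cone, I would project from a singular point $p$ of multiplicity $m$: the image $V'$ in $\p^{r-1}$ has degree $(d-m)/t$ where $t$ is the degree of the projection, and comparing against the minimal degree $\mathrm{codim}=r-1-n=d-3$ of a nondegenerate variety of dimension $n$ in $\p^{r-1}$ forces $m=2$ and $t=1$. This shows $V$ can have at most double points and that projecting from such a point yields a variety of minimal degree, hence rational, so $V$ is rational. If $V$ is smooth, then $-K_V$ is a positive multiple of the hyperplane class (the ``index'' computation, inherited from the del Pezzo structure of the general surface section via Lemma \ref{lem:5}), so $V$ is a smooth Fano $n$-fold of coindex $\leq 2$ with $\mathrm{Pic}$ generated by $\O_V(1)$. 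Invoking the Fujita--Mukai classification of del Pezzo manifolds (smooth varieties with $-K_V=(\dim V-1)H$), the list is exactly the complete intersection of two quadrics, the cubic, the Grassmannian section $\mathbb{G}(1,4)$ and its hyperplane/codimension-$2$ sections, and $\p^2\times\p^2$; matching degrees $d=4,3,5,6$ against the allowed range $3\leq d\leq 8$ reproduces items (1)--(4). Rationality of each of these is classical, the cubic being the only possible exception.

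The main obstacle I expect is not the classification list but the passage from the general threefold section $W$ to $V$: one must ensure that the \emph{same} projective type is inherited in all intermediate dimensions, i.e., that a family of surface or threefold sections of the stated type forces $V$ globally to be of the corresponding type rather than merely having sections that happen to lie on such varieties. I would handle this by a linear-section/induction argument: the hyperplane class and the generator of the Picard group are restricted from $V$, so $-K_V$ being the appropriate multiple of $H$ is detected on the threefold section (where $\rho=1$ gives full control), and then the Lefschetz-type statements guarantee $\mathrm{Pic}(V)\cong \bbZ$ generated by $H$ for $n\geq 4$; the degree constraint then pins down the variety. The delicate point is the smooth Grassmannian case, where sections of $\mathbb{G}(1,4)$ by subspaces of various codimension all have genus-one curve sections, so the range $4\leq n\leq 6$ must be read off from the requirement that the section remain nondegenerate and of the prescribed degree, and I would verify this compatibility directly from the Plücker geometry rather than from an abstract argument.
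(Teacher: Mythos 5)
Your proposal follows essentially the same route as the paper's proof: the singular, non--cone case is handled by projection from a singular point exactly as in Lemma \ref{lem:op} (same numerology, forcing $m=2$, $t=1$ and landing on a variety of minimal degree, hence rational), and the smooth case is settled by the classification of del Pezzo manifolds, which is precisely what the paper invokes as \cite[Thm. 3.3.1]{IP}. The only real difference is in presentation: you make explicit the bridge from elliptic curve sections to the del Pezzo structure $-K_V=(n-1)H$ via adjunction and Lefschetz (the paper leaves this implicit in the citation), while the paper spells out the classical rationality constructions for items (2)--(4) --- projection from a general point for the $(2,2)$ complete intersection, and Semple's representation of $\mathbb G(1,4)$ by the quadrics through a Segre $\p^1\times\p^2$ --- which you compress into ``classical''.

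Two local slips, neither fatal but worth fixing. First, your assertion that $\Pic(V)$ is generated by $\O_V(1)$ for $n\geq 4$ is false and is contradicted by your own list: the Segre embedding of $\p^2\times\p^2$ in item (4) has Picard rank $2$. What iterated Grothendieck--Lefschetz actually gives is an injection $\Pic(V)\hookrightarrow\Pic(W)$ into the Picard group of the general threefold section, and that injectivity is all you need: since $W$ is a smooth Fano threefold of index $2$ by the case (v) analysis, $(K_V+(n-1)H)|_W=K_W+2H_W=0$, whence $K_V=-(n-1)H$; Fujita's classification of del Pezzo manifolds then applies with no hypothesis on the Picard rank (and it must not assume $\rho=1$, or item (4) would be excluded from the outset). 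Second, in the projection step the correct lower bound for the degree of the nondegenerate image $V'\subseteq\p^{r-1}$ is $\codim V'+1=d-2$, not $d-3$ as you wrote; with the weaker bound $d-3$ the multiplicity $m=3$ would not be excluded, so make sure to use $\deg V'\geq\codim V'+1$, which pins down $m=2$ and $t=1$ as in the paper. With these corrections your argument coincides with the paper's proof.
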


\begin{proof} If $V$ is singular but not a cone, then the same argument as in the proof of Lemma \ref {lem:op} shows that $V$ has at most double points and that it is rational. If $V$ is smooth, the classification in the cases (1)--(4) follows from \cite [Thm. 3.3.1]{IP}. In the cases (2)--(4), $V$ is well known to be rational. In fact, if $V$ is a smooth complete intersection of two quadrics in $\p^{n+2}$, by (birationally) projecting $V$ from a general point on it,  one gets a cubic in $\p^{n+1}$ with some double points, that is rational. The Grassmannian $\mathbb G(1,4)\subset \p^9$
is rational and one has the well known representation of it in $\p^6$ via the linear system of quadrics of $\p^6$ that contain the Segre embedding of $\p^1\times \p^2$ in a hyperplane of $\p^6$ \cite [\S 15] {Se}. Then a hyperplane section of $\mathbb G(1,4)$ is birational to a quadric of $\p^6$, hence it is rational and the intersection of $\mathbb G(1,4)$ with a codimension 2 linear space is birational to a complete intersection of two quadrics in $\p^6$, that is again rational. Finally the Segre embedding of $\p^2\times \p^2$ in $\p^8$ is clearly rational.
\end{proof}

\subsection{Threefold sections as in case (ii) or (iii) of  Theorem \ref {thm:main1}}

\begin{proposition}\label{prop:1dim} If the general threefold section $W$ of $V\subseteq \p^r$ as above is swept out by a 1--dimensional rational family of Veronese surfaces of degree 4 (or external projections of such surfaces) or by a 1--dimensional rational family of (generically smooth) 2--dimensional quadrics, then $V$ is rational.
\end{proposition}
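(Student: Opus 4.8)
The plan is to show that the geometric structure carried by the general threefold section $W$ — a rational one--dimensional family of Veronese surfaces (case (ii)) or of two--dimensional quadrics (case (iii)) — globalizes to all of $V$, exhibiting $V$ as a fibration over a rational base $B$ of dimension $n-2$ whose general fibre is such a surface. Since a Veronese surface is isomorphic to $\p^2$ and a two--dimensional quadric to $\p^1\times \p^1$, a fibration of either type over a rational base is rational as soon as it is shown to admit a rational (multi)section; this will give the assertion.

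First I would record the ambient scroll structure already present on the threefold section. In case (iii) each quadric $Q_t\cong \p^1\times \p^1$ with $\L_{|Q_t}\cong \O_{\p^1\times \p^1}(1,1)$ spans a $\p^3$, and as $t$ runs over the parametrizing rational curve these $\p^3$'s sweep out a four--dimensional rational normal scroll containing $W$, on which $W$ is a relative quadric; this is exactly the picture attached to case (2) of the Contraction Theorem \ref{thm:mori}. In case (ii) the Veronese surfaces (or their projections) span projective spaces of bounded dimension, and one obtains the analogous relative--Veronese description of $W$ inside a scroll in such spaces.

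Next — and this is the heart of the matter — I would prove that these scrolls fit together as the threefold section $W$ varies inside $V$. The assignment sending a general point $x\in V$ to the linear span of the surface through $x$ is intrinsic to $V$ and restricts on each threefold section to the rational map described above; because $V$ is irreducible, non--degenerate and (by assumption) not a cone, I expect these spans to organize into a single rational normal scroll $\Sigma\supseteq V$ over a rational base $B$ with $\dim B=n-2$, so that $\varphi\colon V\to B$ realizes $V$ as a relative two--dimensional quadric (case (iii)) or relative Veronese surface (case (ii)). The hard part will be precisely this globalization: showing that the family extends over a base of the correct dimension, that this base is rational, and that there is no monodromy in the family of linear spans (nor degenerations) that would obstruct the scroll structure.

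Finally, granting the fibration $\varphi\colon V\to B$ over the rational base $B$, I would conclude rationality by mimicking the threefold analysis. In case (ii) the general fibre is a $\p^2$, and the intrinsic Veronese (hence conic) structure produces a rational section, so $V$ is birational to a $\p^2$--bundle over the rational base $B$ and is therefore rational. In case (iii), after a generic projection reducing $\Sigma$ to a pencil of hyperplanes through a fixed linear space (as in the threefold case), the residual conics cut out on that fixed space form a family with a base point; this base point furnishes a section of the quadric fibration, and fibrewise projection from it gives a birational map of $V$ onto a rational scroll. In either case $V$ is rational, as claimed.
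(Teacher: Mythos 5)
Your proposal has the right target structure (a fibration of $V$ over a rational base of dimension $n-2$ with Veronese, resp.\ quadric, fibres), but the step you yourself call ``the heart of the matter'' is left entirely unproved: you only write that you \emph{expect} the linear spans to organize into a single scroll over a rational $(n-2)$--dimensional base, with no monodromy or degeneration obstructions. That globalization is precisely the nontrivial content of the proposition, and the paper resolves it with a concrete construction that your sketch is missing. Fix a \emph{general surface section} $F$ of $V$, spanning a linear space $\Pi$ of dimension $r-n+2$, and project $\pi\colon V\dasharrow \p^{n-3}$ from $\Pi$: the fibre through a general $x\in V$ is a threefold section $W_x$ containing $F$. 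The key observation is that the $1$--dimensional family sweeping out $W_x$ cuts out on the \emph{fixed} surface $F$ one and the same pencil $\mathcal P\cong \p^1$ of rational curves, independent of $x$. One then defines $\pi'\colon V\dasharrow \p^{n-3}\times\p^1$, $x\mapsto (\pi(x),\phi(x))$, where $\phi(x)$ records the member of $\mathcal P$ cut on $F$ by the unique surface of the family in $W_x$ through $x$. This map is dominant with general fibre a Veronese surface (resp.\ a quadric), and the base is \emph{explicitly} $\p^{n-3}\times\p^1$, so its rationality and the absence of monodromy are automatic; no ambient scroll $\Sigma$ over an abstractly constructed base $B$ ever needs to be built. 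Without some argument of this kind your proof does not get off the ground.

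Your endgame also needs repair. In the Veronese case, asserting that ``the intrinsic Veronese (hence conic) structure produces a rational section'' is circular: a section of a $\p^2$--fibration is equivalent to a rational point of the generic fibre, i.e.\ to triviality of the corresponding Severi--Brauer surface. The correct justification (implicit in the paper's statement that the fibre is isomorphic to $\p^2$ over the field $\C(\xi)$) is that the generic fibre carries the classes $\O(2)$ (the hyperplane class) and $\O(-3)$ (the canonical class) over the function field, and $\gcd(2,3)=1$ forces $\O(1)$, hence a trivialization, to be defined there (for external projections one passes to the normalization). In the quadric case your proposed ``generic projection reducing $\Sigma$ to a pencil of hyperplanes'' does not make sense over an $(n-2)$--dimensional base, and the claimed base point of the residual conics is unsubstantiated. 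The paper's argument is cleaner: since $F$ is a rational surface, one can fix a unisecant curve $\Gamma\subset F$ to the conics of $\mathcal P$; the general fibre $Q$ of $\pi'$ meets $F$ exactly in a conic of $\mathcal P$, so $\Gamma$ meets $Q$ in a single point and is a unisecant of the whole quadric fibration, giving a rational point of the generic fibre and hence the rationality of $V$.
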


\begin{proof} Suppose first that $W$ is swept out by a 1--dimensional rational family of Veronese surfaces of degree 4 (or external projections of such surfaces). Fix a general surface section $F$ of $V$ spanning a linear space $\Pi$ of dimension $r-n+2$. Let us consider the projection $\pi: V\dasharrow \p^{n-3}$ from $\Pi$, that is dominant. The fibre of $\pi$  passing through a general point $x\in V$  is a threefold section $W_x$ of $V\subseteq \p^r$ containing $F$ (that is a general threefold section of $V$). The Veronese surfaces of the 1--dimensional rational family sweeping out $W_x$ cut out on $F$ a pencil $\mathcal P$ of rational curves that  does not depend on $x$, and we may identify $\mathcal P$ with $\p^1$. Then we can consider the map
$$
\pi': V\dasharrow \p^{n-3}\times \p^1
$$ 
that maps a general point $x\in V$ to the pair $(\pi(x),\phi(x))$, where $\phi(x)\in \p^1\cong\mathcal P$ is the unique point of $\mathcal P$ corresponding to the curve cut out on $F$ by the unique Veronese surface in $W_x$ passing through $x$. The map $\pi'$ is clearly dominant and its fibre over a general point $\xi\in \p^{n-3}\times \p^1$ is a Veronese surface that is isomorphic to $\p^2$ over the field $\mathbb C(\xi)$ of rationality of the point $\xi$  (if the fibre is an external projection of a Veronese,  its normalization is isomorphic to $\p^2$). This implies that $V$ is rational.

Suppose next that $W$ is swept out by a 1--dimensional rational family of (generically smooth) 2--dimensional quadrics. The argument is similar to the previous one. Fix a general surface section $F$ of $V$ spanning a linear space $\Pi$ of dimension $r-n+2$. Let us consider the projection $\pi: V\dasharrow \p^{n-3}$ from $\Pi$. The fibre of $\pi$ passing through a general point $x\in V$  is a threefold section $W_x$ of $V\subseteq \p^r$ containing $F$ (that is a general threefold section of $V$). The quadrics of the 1--dimensional rational family sweeping out $W_x$ cut out on $F$ a pencil $\mathcal P$ of conics that does not depend on $x$, and we may identify $\mathcal P$ with $\p^1$. 
Moreover we can fix a unisecant curve $\Gamma$ on $F$ to the conics of $\mathcal P$. Then consider the map $\pi'$
similar to the one we constructed in the previous case. The map $\pi'$ is  dominant and its general fibre is a quadric. So $\pi'$ endows $V$ with a (birational) structure of a quadric fibration over  $\p^{n-3}\times \p^1$ and, by the existence of the unisecant curve $\Gamma$, this quadric fibration has a unisecant. The rationality of $V$ follows. \end{proof}

\begin{remark}\label{rem:mor} Morin claims in \cite [\S 32]{Mo} that under the hypotheses of Proposition \ref {prop:1dim}, $V$ is swept out by a 1--dimensional rational family of cones of dimension $n-1$ over Veronese surfaces or by a 1--dimensional rational family of  quadrics of dimension $n-1$. Unfortunately Morin's argument is not convincing and  we have not been able to fix it. So we leave it as a problem to check Morin's assertion. 
\end{remark}

\subsection{Scrolls in linear spaces of dimension $n-2$} 

Here we suppose that $W$ falls in case (iv) of the list in Theorem \ref {thm:main1}. 

\begin{proposition}\label{prop:case3} If the general threefold section $W$ of $V\subseteq \p^r$ as above is a scroll in lines over a rational surface, then $V$ is a scroll in linear spaces of dimension $n-2$ over a rational surface, and therefore it is rational.
\end{proposition}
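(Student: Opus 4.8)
The plan is to argue by induction on $n$, lifting the scroll structure one dimension at a time by passing to a general hyperplane section. The base case is $n=3$, where a general threefold section is $V$ itself: by hypothesis it is a scroll in lines over a rational surface, i.e. a scroll in linear spaces of dimension $3-2=1$, and such a scroll is birational to the product of its rational base with $\p^1$, hence rational. For the inductive step I would fix $V$ of dimension $n\geq 4$ and consider a general hyperplane section $V'=V\cap H$. Since a general surface (resp. threefold) section of $V'$ is a general surface (resp. threefold) section of $V$, and since the general hyperplane section of a variety that is not a cone is again not a cone, $V'$ satisfies the same hypotheses in dimension $n-1$; by induction $V'$ is a scroll in linear spaces of dimension $n-3$ over a rational surface $S'$. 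In particular $V$ is covered by lines, because a general threefold section through a general point $x\in V$ is a scroll in lines and hence carries a line of $V$ through $x$.

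The heart of the argument is to show that, for general $x\in V$, the union $T_x$ of all lines of $V$ through $x$ is a linear space $\p^{n-2}$. For a general hyperplane $H\ni x$ the section $V\cap H$ is a scroll in $(n-3)$-planes by induction, and the member of this scroll through $x$ is swept out by the lines of $V\cap H$ through $x$, that is, by the lines of $V$ through $x$ lying in $H$. This member therefore equals the hyperplane section $T_x\cap H$ (through the vertex $x$) of the cone $T_x$, and it is a linear $\p^{n-3}$; in particular $\dim T_x=n-2$. Letting $H$ vary, the general hyperplane section of $T_x$ through $x$ is an $(n-3)$-dimensional linear space, equivalently the general hyperplane section of the base $\mathcal{L}_x\subseteq\p^{r-1}$ parametrizing the directions of these lines is a linear $\p^{n-4}$. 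Since a variety whose general hyperplane section is a linear space has degree $1$, I conclude that $\mathcal{L}_x$ is a linear $\p^{n-3}$ and hence $T_x$ is a linear $\p^{n-2}$ contained in $V$. I expect this step — upgrading the inductive linearity one dimension lower into genuine linearity in dimension $n$ — to be the main obstacle, and it is precisely the combination of the hyperplane section with the degree‑one criterion that resolves it.

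It then remains to organize the planes $T_x$ into a scroll. For $x'$ general on $T_x$ one has $T_x\subseteq T_{x'}$, since every line of $T_x$ through $x'$ lies in $V$; moreover a general $(n-2)$-plane $T_x$ meets the locus of points where $T_{\bullet}$ fails to be a $\p^{n-2}$ in dimension $<n-2$, so a general $x'\in T_x$ again satisfies that $T_{x'}$ is a $\p^{n-2}$. Equality of dimensions then forces $T_{x'}=T_x$. Hence the rational map $x\mapsto [T_x]$ has the planes $T_x$ as its fibres, its image $B$ has dimension $n-(n-2)=2$, and $V$ acquires the structure of a scroll in $(n-2)$-planes over $B$. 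Finally, the assignment $T\mapsto T\cap H$ identifies $B$ birationally with the base surface $S'$ of the scroll $V'=V\cap H$, so $B$ is a rational surface. Therefore $V$ is a scroll in linear spaces of dimension $n-2$ over a rational surface; being birational to $B\times\p^{n-2}$ with $B$ rational, it is rational.
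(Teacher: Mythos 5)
Your strategy is recognizably the same key idea as the paper's (which avoids induction: it counts parameters to get a family $\mathcal F$ of lines of dimension $2n-4$, notes that the lines of $\mathcal F$ through a general point $x$ fill a variety $\Pi_x$ of dimension $n-2$, and cuts $\Pi_x$ with a general $(r-n+3)$-dimensional linear space through $x$ to get a single ruling line, forcing $\deg \Pi_x=1$). But your version has a genuine gap: you define $T_x$ as the union of \emph{all} lines of $V$ through $x$ and claim it is a linear $\p^{n-2}$, and this is false for varieties satisfying the hypotheses that carry more than one covering family of lines. Take $V=\p^2\times\p^2\subset\p^8$ (so $n=4$): its general hyperplane section is the flag threefold $\{(p,q):\, q^T A p=0\}$, which is a scroll in lines over $\p^2$ (in two ways), so the hypothesis of the proposition holds; yet every line of the Segre variety has bidegree $(1,0)$ or $(0,1)$, so through a general point $x=(p,q)$ one gets $T_x=(\{p\}\times\p^2)\cup(\p^2\times\{q\})$, a union of two planes meeting only at $x$ — not a $\p^2$. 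Correspondingly, for a general hyperplane $H\ni x$ the intersection $T_x\cap H$ is a union of two lines, and your assertion that ``the member of the scroll through $x$ equals $T_x\cap H$'' breaks down: the inclusion of the scroll member in $T_x\cap H$ is clear, but the reverse inclusion — that every line of $V\cap H$ through $x$ lies in that one member — is stated without argument, and it is exactly what fails. Everything downstream ($\dim T_x=n-2$ plus the degree-one criterion giving linearity of $T_x$, the well-definedness of $x\mapsto[T_x]$, the two-dimensional base) relies on this false equality.

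The gap is repairable, and the repair is essentially what the paper does implicitly: fix one irreducible covering family of lines on $V$ (an irreducible component, of dimension $2n-4$, of the family swept out by rulings of general threefold sections), define $T_x$ as the union of the lines of \emph{that} family through $x$, and rerun the argument keeping track of the chosen family, so that a general threefold section through $x$ meets $T_x$ in the single ruling line of the corresponding scroll structure. With that modification your linear-section/degree-one mechanism closes and the scroll map becomes well defined; as written, however, the inductive step is invalid.
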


\begin{proof} An easy count of parameters shows that $V$ has a family $\mathcal F$ of lines of dimension $2n-4$. If $x\in V$ is a general point, there is a family $\mathcal F_x$ of dimension $n-3$ of lines in $\mathcal F$ passing through $x$, so that they fill up a variety  $\Pi_x$ of dimension $n-2$. The intersection of $\Pi_x$ with a general linear space of dimension $r-n+3$ passing through $x$ is a line, so $\Pi_x$ is a linear space of dimension $n-2$, as wanted. 
\end{proof}

\subsection{Threefold sections as in case (vi) of Theorem \ref {thm:main1}} 

Before dealing with this case, we recall a result by Fujita (see \cite [Thms. (2.9) and (3.1), and Cor. (3.2)]{Fu}):

\begin{theorem}\label{thm:fuji} Let $X\subset \p^r$ be a normal variety of dimension $n$ and let $Y$ be a smooth hyperplane section of $X$ such that $h^1(Y, T_Y\otimes \mathcal O_Y(-i))=0$ for all integers $i\geq 1$. Then $X$ is a cone with vertex a point over $Y$.  
\end{theorem}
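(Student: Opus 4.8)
The plan is to prove that $X$ equals the cone $C_p(Y)$ over $Y$ with vertex a suitable point $p$. Fix homogeneous coordinates $x_0,\dots,x_r$ on $\p^r$ so that the given hyperplane is $H_0=\{x_0=0\}$ and $Y\subset H_0=\p^{r-1}$, and let $p=[1:0:\cdots:0]\notin H_0$. Since $Y$ is smooth and is a hyperplane section, the singular locus of $X$ meets $Y$ only where $Y$ itself is singular; hence the singular locus of $X$ is disjoint from $Y$ (for a cone these singularities collapse to the vertex). The statement $X=C_p(Y)$ is equivalent to the assertion that, in these coordinates, the saturated homogeneous ideal $I(X)$ is generated by the forms of $I(Y)\subset\C[x_1,\dots,x_r]$, i.e.\ that the equations of $X$ can be taken independent of $x_0$. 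Equivalently, the vector field $\theta=\sum_{j=1}^r x_j\,\partial_{x_j}$ generating the $\mathbb{G}_m$--action that fixes $p$ and acts trivially on $H_0$ is tangent to $X$: integrating its flow produces the rulings of the cone.

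I would establish the tangency of $\theta$ by an induction on the order of contact along $Y$, showing that $X$ is \emph{conical to every order} in the infinitesimal neighborhoods $Y^{(m)}=X\cap mH_0$ of $Y$ in $X$; dually, this is an induction on the graded pieces $I(X)_m$ in which one strips the dependence on $x_0$ from the degree--$m$ equations one power of $x_0$ at a time. At each stage the obstruction to pushing the conical structure from the $i$--th to the $(i+1)$--th order is a cohomology class measuring the failure of $\theta$ to be tangent modulo $x_0^{\,i+1}$, and the bookkeeping should identify this class as living in a subquotient of $H^1(Y,T_Y\otimes\O_Y(-i))$, via the normal bundle sequence
$$
0 \longrightarrow T_Y \longrightarrow T_{\p^{r-1}}|_Y \longrightarrow N_{Y/\p^{r-1}} \longrightarrow 0
$$
twisted by $\O_Y(-i)$, together with the restriction to $Y$ of the Euler sequence of $\p^{r-1}$. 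The indexing is such that only strictly negative twists ($i\geq 1$) ever intervene, since the $0$--th order statement is merely $Y\subset X$, which is given. The hypothesis $h^1(Y,T_Y\otimes\O_Y(-i))=0$ for all $i\geq 1$ then kills every such obstruction, so the induction never stalls and $\theta$ is tangent to $X$ along all of $Y$, hence on a neighborhood of $Y$.

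Finally I would globalize. Normality of $X$ forces its singular locus to have codimension $\geq 2$ and, as noted, to be disjoint from $Y$; so by Hartogs/$S_2$--extension the derivation $\theta$, a priori defined near $Y$, extends to a global derivation of $X$. The associated $\mathbb{G}_m$--action preserves $X$, fixes $p$, and acts trivially on $Y\subset H_0$, which is exactly the condition exhibiting $X$ as $C_p(Y)$; invariance and projectivity of the irreducible $X$ also force the attracting fixed point $p$ to lie on $X$ as the vertex.

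The main obstacle I anticipate is the middle step: identifying the order--raising obstruction with a class in $H^1(Y,T_Y(-i))$ rather than in some larger group. This requires carefully threading the Euler and normal sequences above (to trade the twists of $N_{Y/\p^{r-1}}$ and $T_{\p^{r-1}}|_Y$ for those of $T_Y$, whose higher cohomology we control) and checking that the contributions one does \emph{not} control — the $H^1$ of twists of $\O_Y$ and the $H^0$ terms — either cancel or do not enter the obstruction. A secondary but unavoidable technical point is that $X$ is only assumed normal: the entire argument must be phrased with the sheaf of derivations rather than a tangent bundle, the extension of $\theta$ across the singularities being legitimate precisely because that locus avoids $Y$.
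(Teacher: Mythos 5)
The paper does not prove this statement: it is quoted verbatim from Fujita \cite{Fu}, whose argument runs as follows. One extends the identity of $Y$ to compatible morphisms $\rho_i\colon Y_i\to Y$ on the infinitesimal neighborhoods $Y_i$ of $Y$ in $X$; since $N_{Y/X}\cong\O_Y(1)$, one has $\I_Y^i/\I_Y^{i+1}\cong\O_Y(-i)$, and the obstruction to lifting $\rho_i$ to $\rho_{i+1}$ lies \emph{exactly} in $H^1(Y,\rho_i^*T_Y\otimes \I_Y^i/\I_Y^{i+1})=H^1(Y,T_Y\otimes\O_Y(-i))$ --- no ambient or normal-bundle terms occur, because the map being deformed has target $Y$ itself. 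The hypothesis kills every obstruction, the formal retraction algebraizes to a rational retraction $\rho\colon X\dasharrow Y$, and Fujita's Theorem (3.1) shows that a rational retraction onto an ample hyperplane section forces $X$ to be a cone, the vertex emerging \emph{a posteriori} as the common point of the closures of the fibres of $\rho$. Your sketch shares the order-by-order idea, but the object you extend (tangency of a fixed Euler field $\theta$) rather than a retraction is where it breaks.

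There are two concrete gaps. First, you fix the candidate vertex $p$ before the induction starts, but the hypothesis concerns $Y$ alone and cannot single out $p$: if $X$ is the cone over $Y$ with vertex $p'\neq p$, all assumptions hold and yet your $\theta$ is not tangent to $X$, so the obstruction classes you propose cannot vanish canonically. They could at best vanish after adjustments, but the only adjustments your linear framework allows (automorphisms of $\p^r$ fixing the hyperplane $H_0$ pointwise) form the finite-dimensional family parametrizing $p$ and a scaling, which is exhausted at first order; the higher-order corrections that any obstruction calculus needs (in Fujita's setup, the torsor structure under $H^0(T_Y(-i))$ at each step) have no home in your formulation. Second, the step you yourself flag as the main obstacle fails in the direction you need: tangency of $\theta$ to the embedded $X$ is measured in twists of $T_{\p^{r-1}}|_Y$ or $N_{Y/\p^{r-1}}$, and the Euler and normal sequences give maps $H^1(Y,T_Y(-i))\to H^1(Y,T_{\p^{r-1}}|_Y(-i))$, so vanishing of the source does not kill the target; the extra terms such as $H^1(Y,\O_Y(1-i))^{\oplus r}$ and $H^0(Y,N_{Y/\p^{r-1}}(-i))$ are not controlled by the hypothesis --- indeed nonvanishing of $H^0(N_{Y/\p^{r-1}}(-1))$ is precisely what permits the cone extension to exist at all, so it cannot cancel. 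Repairing the proposal would mean replacing ``tangency of a pre-chosen Euler field'' by ``extension of the identity to a retraction,'' i.e.\ reproducing Fujita's proof; as written, the proposal has a genuine gap. (Minor points in your favor: smoothness of the Cartier divisor $Y$ does imply $X$ is smooth along $Y$, and infinite-order tangency along $Y$ would globalize over the irreducible $X$ by a Krull-intersection argument; but both are downstream of the failed step.)
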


\begin{proposition}\label{prop:fuji} Let $V\subseteq \p^r$ be a variety as above of dimension $n\geq 4$ whose general threefold section is the 3--Veronese embedding of $\p^3$ in $\p^{19}$. Then $V$ is the cone with vertex a linear space of dimension $n-4$ over the 3--Veronese embedding of $\p^3$ in $\p^{19}$ (and therefore it is rational).
\end{proposition}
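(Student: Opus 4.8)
The plan is to reduce, by taking general linear sections, to the point--vertex statement of Theorem \ref{thm:fuji}, and then to propagate the cone structure one dimension at a time. Throughout I write $V^{(k)}$ for the section of $V$ by a general linear subspace of codimension $n-k$, so that $V^{(3)}=W$ is the $3$--Veronese $v_3(\p^3)\subset\p^{19}$ and $V^{(n)}=V$; the general threefold section of each $V^{(k)}$ is again $W$, and since a general threefold section spans $\p^{19}$ and spans drop by one under a general hyperplane section, $V$ spans $\p^{n+16}$. Because $W$ is smooth, hence normal, and a variety of dimension $\geq 2$ is normal as soon as its general hyperplane section is normal, an ascending induction shows that every $V^{(k)}$, and in particular $V$ itself, is normal.

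First I would record the cohomological input needed to invoke Theorem \ref{thm:fuji}. As an abstract polarized variety $W\cong\p^3$ with $\O_W(1)\cong\O_{\p^3}(3)$ and $T_W\cong T_{\p^3}$, so the groups in question are $H^1(\p^3,T_{\p^3}(-3i))$ for $i\geq 1$. Twisting the Euler sequence by $\O_{\p^3}(-3i)$ gives $0\to\O_{\p^3}(-3i)\to\O_{\p^3}(1-3i)^{\oplus 4}\to T_{\p^3}(-3i)\to 0$, and since $H^1(\p^3,\O_{\p^3}(k))=H^2(\p^3,\O_{\p^3}(k))=0$ for every $k$, the long exact sequence yields $H^1(\p^3,T_{\p^3}(-3i))=0$. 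Thus $h^1(W,T_W\otimes\O_W(-i))=0$ for all $i\geq 1$, and the hypothesis of Theorem \ref{thm:fuji} is satisfied.

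Now I would argue by induction on $n$, the inductive statement being: \emph{a normal, irreducible, non--degenerate variety of dimension $m\geq 4$ whose general threefold section is $W$ is a cone over $W$ with vertex a $\p^{m-4}$}. For the base case $n=4$ the general hyperplane section of $V$ is $W$, which is smooth and satisfies the vanishing just checked; since $V$ is normal, Theorem \ref{thm:fuji} exhibits $V$ as a cone over $W$ with vertex a point, i.e.\ a $\p^{n-4}$. For $n\geq 5$ take a general hyperplane section $V'=V\cap H$: it is normal, irreducible and non--degenerate of dimension $n-1\geq 4$, with general threefold section $W$, so by induction $V'$ is a cone over $W$ with vertex a linear space of dimension $(n-1)-4=n-5$. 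It remains to promote this one step: since the general hyperplane section $V'$ of $V$ is a cone, $V$ is itself a cone, with vertex of dimension $n-4$. Finally a general $3$--dimensional linear section of $V$ is a section by a general $\p^{19}$, which misses the vertex $\p^{n-4}$ (as $(n-4)+19<n+16$) and hence is a complementary section mapping isomorphically onto the base of the cone; since this general threefold section is $W$, the base is projectively $W$, so $V$ is the cone over $v_3(\p^3)$ with vertex $\p^{n-4}$, and in particular rational.

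The step I expect to be the crux is the promotion in the inductive step, namely the implication ``the general hyperplane section of $V$ is a cone $\Rightarrow$ $V$ is a cone, with vertex dimension raised by one''. I would prove it through the classical description of the vertex of a non--degenerate variety $X$ as the intersection $\mathrm{Vrt}(X)=\bigcap_{x\in X_{\mathrm{sm}}}\mathbb T_xX$ of its embedded projective tangent spaces, which is a linear space over which $X$ is a cone. The inclusion $\mathrm{Vrt}(X)\cap H\subseteq\mathrm{Vrt}(X\cap H)$ is immediate, and for general $H$ one has $(X\cap H)_{\mathrm{sm}}=X_{\mathrm{sm}}\cap H$ and $\mathbb T_y(X\cap H)=\mathbb T_yX\cap H$. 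The delicate point is the reverse inclusion $\mathrm{Vrt}(X\cap H)\subseteq\mathrm{Vrt}(X)\cap H$, i.e.\ that cutting by a general hyperplane produces no new common tangent direction; granting it, $\mathrm{Vrt}(X\cap H)=\mathrm{Vrt}(X)\cap H$, so $\mathrm{Vrt}(V')\neq\emptyset$ forces $\mathrm{Vrt}(V)\neq\emptyset$ with $\dim\mathrm{Vrt}(V)=\dim\mathrm{Vrt}(V')+1$, exactly as required.
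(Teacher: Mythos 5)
Your base case $n=4$ coincides with the paper's proof (Fujita's Theorem \ref{thm:fuji} plus the identical Euler--sequence computation of $h^1(\p^3,T_{\p^3}(-3i))=0$), but your justification of the normality hypothesis is false: it is not true that a variety of dimension $\geq 2$ is normal as soon as its general hyperplane section is normal. The cone over any smooth but non--projectively--normal variety is a counterexample: e.g.\ the cone in $\p^4$ over a smooth rational quartic curve in $\p^3$ is non--normal at its vertex, yet its general hyperplane sections are the smooth curve itself. Normality does not ascend from general hyperplane sections; what does ascend is projective normality, and this is what the paper uses: the 3--Veronese of $\p^3$ is projectively normal and $V$ is \emph{linearly normal} by the standing hypothesis, whence $V$ is projectively normal and in particular normal. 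Your version never invokes linear normality, which is a sign something is lost.

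The more serious problem is the promotion step, which you yourself identify as the crux and then do not prove: ``granting it'' leaves the inclusion $\mathrm{Vrt}(X\cap H)\subseteq \mathrm{Vrt}(X)\cap H$ unestablished, and this inclusion is literally equivalent to the assertion ``general hyperplane section a cone implies $X$ a cone,'' i.e.\ to the whole inductive step. Note that by Bertini the vertex of a general section $X\cap H$ must lie in $\mathrm{Sing}(X)\cap H$, so the a priori danger is a section acquiring its vertex at a \emph{moving} singular point of $X$; your tangent--space formalism records this possibility but does nothing to exclude it. The paper closes the step by a different, degree--theoretic argument that you should adopt: $\deg V=27$, and the vertex of the general $4$--fold section is a point of multiplicity $27=\deg V$ on that section, hence on $V$; a point of multiplicity equal to the degree is automatically a vertex of $V$ (projection from it would have image of degree $0$, so it cannot be generically finite, and $V$ is ruled by lines through the point), so the locus of multiplicity--$27$ points is a linear vertex, of dimension exactly $n-4$ because the general threefold section, being the smooth Veronese, must miss it while the general $4$--fold section meets it. Substituting this multiplicity count for your unproven tangent--space identity repairs the induction; as written, the proposal proves only the case $n=4$, and that modulo the faulty normality lemma.
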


\begin{proof} We assume first that $V$ has dimension 4. The hypothesis is that a general hyperplane section of $V$ is the 3--Veronese embedding of $\p^3$ in $\p^{19}$. We apply Theorem \ref {thm:fuji} to conclude that $V$ is a cone with vertex a point over the 3--Veronese embedding of $\p^3$ in $\p^{19}$. Indeed, since the 3--Veronese embedding of $\p^3$ in $\p^{19}$ is projectively normal, then also $V$ is projectively normal, so it is normal. To apply Theorem \ref {thm:fuji}  we need to show that
$h^1(\p^3, T_{\p^3}\otimes \O_{\p^3}(-3i))=0$ for all integers $i\geq 1$. From the Euler sequence for $T_{\p^3}$, we deduce that
$$
H^1(\p^3, \O_{\p^3}(1-3i))^{\oplus 4}\longrightarrow H^1(\p^3, T_{\p^3}\otimes \O_{p^3}(-3i))\longrightarrow H^2(\p^3, \O_{\p^3}(-3i))
$$
for all $i\geq 1$.  One has $h^1(\p^3, \O_{\p^3}(1-3i))=h^2(\p^3, \O_{\p^3}(-3i))=0$, for all $i\geq 1$ and this implies $h^1(\p^3, T_{\p^3}\otimes \O_{p^3}(-3i))=0$ for all $i\geq 1$, as wanted. 

Next we assume that $n\geq 5$. The degree of $V$ is 27, and the general 4--fold linear section is a cone over the 3--Veronese embedding of $\p^3$ in $\p^{19}$, so it has a singular point of multiplicity $27$. This implies that $V$ has a set of points of multiplicity 27 that is a linear space of dimension $n-4$. The assertion follows. 
\end{proof}

\subsection{Threefold sections as in case (vii) of Theorem \ref{thm:main1}} 

\begin{proposition}\label{prop:fuji0} Let $V\subseteq \p^r$ be a variety as above of dimension $n\geq 4$ whose general threefold section is the 2--Veronese embedding of a quadric in $\p^4$ in $\p^{13}$. Suppose that $V$ is not a cone. Then $n=4$ and $V$ is the 2--Veronese embedding of $\p^4$ in $\p^{14}$. 
\end{proposition}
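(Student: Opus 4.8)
The plan is to mirror the proof of Proposition~\ref{prop:fuji}: reduce to $n=4$, establish normality of $V$, and then apply Fujita's results to the cohomology of the tangent bundle of the section $W$; the new feature is that here the relevant group fails to vanish in degree $1$, and that one extra dimension is exactly what produces the non-conical variety in the conclusion. For the numerics, write $W$ for the $2$-Veronese image of a smooth quadric threefold $Q\subset\p^4$, so that $W\cong Q$ with $\O_W(1)\cong\O_Q(2)$; then $\deg(V)=\deg(W)=2^3\cdot 2=16$, and since $W$ spans $\p^{13}$ the variety $V$ lives in $\p^r$ with $r=n+10$. Exactly as in Proposition~\ref{prop:fuji}, the $2$-Veronese embedding of the projectively normal $Q$ is projectively normal and this propagates to $V$, so $V$ is normal and Theorem~\ref{thm:fuji} together with its refinements is available.

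I would treat $n=4$ first, where the key computation is $h^1(W,T_W\otimes\O_W(-i))$ for $i\geq 1$. Since $T_W\otimes\O_W(-i)\cong T_Q\otimes\O_Q(-2i)$, I would combine the Euler sequence on $\p^4$ (which gives $H^1(\p^4,T_{\p^4}(k))=H^2(\p^4,T_{\p^4}(k))=0$ for every $k$, hence $H^1(Q,T_{\p^4}|_Q(-2i))=0$) with the conormal sequence of $Q\subset\p^4$, obtaining
$$
H^1(Q,T_Q(-2i))\cong\operatorname{coker}\big(H^0(T_{\p^4}|_Q(-2i))\to H^0(\O_Q(2-2i))\big).
$$
For $i\geq 2$ the right-hand target vanishes, so $h^1(W,T_W\otimes\O_W(-i))=0$; for $i=1$ the target is $H^0(\O_Q)\cong\C$ while $H^0(T_{\p^4}|_Q(-2))=0$, so $h^1(W,T_W\otimes\O_W(-1))=1$. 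Thus the hypotheses of Theorem~\ref{thm:fuji} fail only in degree $1$, and only by one dimension.

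At this point I would invoke the full form of Fujita's analysis (\cite{Fu}, Theorem~(3.1) and Corollary~(3.2)), of which Theorem~\ref{thm:fuji} is the special case where the degree-$1$ group also vanishes: when $h^1(W,T_W\otimes\O_W(-i))=0$ for all $i\geq 2$, any extension of $W$ is either the cone over $W$ or is controlled by the $1$-dimensional space $H^1(W,T_W\otimes\O_W(-1))$, leaving a single non-conical extension up to projective equivalence. Since $Q\in|\O_{\p^4}(2)|$ realizes $W$ as a hyperplane section of the $2$-Veronese embedding of $\p^4$ in $\p^{14}$, that embedding is a non-conical extension of $W$ and must therefore be \emph{the} one; as $V$ is not a cone, $V$ is the $2$-Veronese embedding of $\p^4$. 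I expect this to be the main obstacle: extracting cleanly from Fujita's structure theorem that a $1$-dimensional degree-$1$ cohomology produces exactly one non-conical extension, and matching it with the explicit Veronese.

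For $n\geq 5$ I would argue as in Proposition~\ref{prop:fuji}. A general $4$-fold linear section of $V$ is, by the case $n=4$, either a cone or the smooth $2$-Veronese of $\p^4$. In the first case its vertex is a point of multiplicity $\deg(V)=16$, and the multiplicity argument of Proposition~\ref{prop:fuji} produces an $(n-4)$-dimensional linear space of such points on $V$, making $V$ a cone. In the second case a general $5$-fold section has the smooth $2$-Veronese of $\p^4$ as hyperplane section, and since $H^1(\p^4,T_{\p^4}(-2i))=0$ for all $i\geq 1$, Theorem~\ref{thm:fuji} forces that section to be a cone, whence the same multiplicity argument again makes $V$ a cone. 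Both alternatives contradict the hypothesis that $V$ is not a cone, so $n=4$.
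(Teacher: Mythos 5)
Your cohomology computation is correct ($h^1(Q,T_Q(-2i))=0$ for $i\geq 2$ and $h^1(Q,T_Q(-2))=1$, via the Euler and normal bundle sequences), and your reduction of $n\geq 5$ to $n=4$ is sound. But the pivotal step of the $n=4$ case rests on a statement that is not in the source you cite. Fujita's results in \cite{Fu} (Theorems (2.9), (3.1) and Corollary (3.2), i.e.\ Theorem \ref{thm:fuji} here) treat only the situation where $h^1(W,T_W\otimes\O_W(-i))=0$ for \emph{all} $i\geq 1$, and conclude that the ambient variety is a cone; they contain no classification of extensions when the degree-$1$ group is nonzero, and in particular no assertion that a $1$-dimensional $H^1(W,T_W\otimes\O_W(-1))$ ``leaves a single non-conical extension up to projective equivalence''. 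That assertion is genuinely nontrivial: one needs (a) that every non-conical extension determines a \emph{nonzero} class in $H^1(W,T_W\otimes\O_W(-1))$, (b) that proportional classes yield projectively equivalent extensions, and (c) that the class of the known extension $V_2(\p^4)$ accounts for the whole space. This is the content of the Zak--L'vovsky extension theory and of the universal-extension machinery of \cite{CiDe}, not of \cite{Fu}; as written, your ``full form of Fujita's analysis'' is a gap, which, to your credit, you flag yourself.

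The paper closes exactly this point by a different route: it passes to the general curve section $C\subset\p^{11}$ of $V$, which is the bicanonical model of a genus $5$ curve, notes that the Gaussian map $\gamma_{C,2K_C}$ has corank $3$ by \cite[Thm. 1.4]{CiDe}, deduces from the results of \cite{CiDe} that $V$ is a cone as soon as $n\geq 5$, and then invokes the universal extension of $C$ (\cite[Thm. 1.9]{CiDe}), which has dimension $4$ and is identified in \cite[(9.16)]{CiDe} with the $2$-Veronese embedding of $\p^4$ in $\p^{14}$; since every extension is obtained from the universal one, and $V$ is not a cone, $V$ must be that Veronese variety. Your computation is precisely the tangent-bundle counterpart of that corank count (corank $3$ for the curve versus $h^1=1$ for the threefold, both predicting a maximal non-conical extension of dimension $4$), so your argument becomes complete if you replace the appeal to \cite{Fu} at this step with the extension-theoretic results of \cite{CiDe} applied to $W$ (or to $C$). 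The one place where you use Fujita legitimately is the $5$-fold section with hyperplane section $V_2(\p^4)$, where the full vanishing $h^1(\p^4,T_{\p^4}(-2i))=0$ for all $i\geq 1$ does hold and Theorem \ref{thm:fuji} applies as stated.
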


\begin{proof} The general curve section $C$ of $V$ sits in $\p^{11}$ and it is the bicanonical image of a curve of genus 5.  The corank of the Gaussian map $\gamma_{C,2K_C}$ is 3 (see \cite [Thm. 1.4]{CiDe}) and by the results in \cite {CiDe} this implies that $V$ is a cone if $n\geq 5$. Moreover  by \cite [Thm. 1.9]{CiDe} there is a universal extension of the bicanonical curve $C$ in $\p^{11}$ and it has dimension 4. By \cite [(9.16)]{CiDe}, it is just the 2--Veronese embedding of $\p^4$ in $\p^{14}$. The assertion follows.  
\end{proof}

\subsection{Threefold sections as in case (viii) of Theorem \ref {thm:main1}}

\begin{proposition}\label{prop:fuji1} Let $V\subseteq \p^r$ be a variety as above of dimension $n\geq 4$ whose general threefold section is the 2--Veronese embedding of the cone (with vertex a point) over a Veronese surface of degree 4 in $\p^5$. Then $V$ is rational. \end{proposition}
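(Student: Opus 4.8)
The plan is to argue exactly as in the proof of Proposition \ref{prop:1dim}, replacing the pencil of conics used there by the two-dimensional family of conics carried by a threefold section of type (viii), and replacing the unisecant by the singular point of the section.

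First I would record the relevant geometry of the threefold section $W$. Writing $W$ as the $2$-Veronese image of the cone $T$ (with vertex a point $p_0$) over the Veronese surface of degree $4$ in $\p^5$, the generators of $T$ (the lines through $p_0$, parametrized by the $\p^2$ that parametrizes the Veronese surface) are carried by the $2$-Veronese to a two-dimensional family of conics on $W$, all passing through the unique singular point $P_0$ of $W$, the image of $p_0$. By Remark \ref{rem:more}, (a), these conics cut out on a general surface section $F$ the pairs of points of the involution $\iota$ whose quotient is $\p^2$; equivalently the family is parametrized by $F/\iota\cong\p^2$, a $\p^2$ that is intrinsically attached to the polarized surface $(F,\mathcal O_V(1)_{|F})$ and hence does not depend on the threefold section $W\supseteq F$ used to produce it.

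Then I would set up the fibration, following Proposition \ref{prop:1dim} verbatim. We may assume $V$ is not a cone (cones being handled by induction on $n$). Fix a general surface section $F$, spanning a linear space $\Pi$ of dimension $r-n+2$, and consider the projection $\pi\colon V\dasharrow\p^{n-3}$ from $\Pi$, whose fibre through a general point $x$ is the threefold section $W_x\cong W$ containing $F$. I would then define
$$\pi'\colon V\dasharrow \p^{n-3}\times\p^2,\qquad x\longmapsto\big(\pi(x),\psi(x)\big),$$
where $\psi(x)\in\p^2\cong F/\iota$ is the point corresponding to the pair cut out on $F$ by the unique conic of $W_x$ through $x$. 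The map $\pi'$ is dominant with one-dimensional fibres, its general fibre being a conic, so $\pi'$ endows $V$ with the structure of a conic bundle over the rational base $\p^{n-3}\times\p^2$. To finish I would produce a rational section: for fixed $t\in\p^{n-3}$ all conics lying in $W_t=\pi^{-1}(t)$ pass through the single singular point $P_0(t)$ of $W_t$, so the assignment $(t,s)\mapsto P_0(t)$ meets every fibre and is a rational section of $\pi'$ (exactly as in Propositions \ref{prop:rarat} and \ref{prop:rarat2}, where a section is obtained from a base point lying on all the conics of the congruence). A conic bundle over a rational base with a rational section is rational, whence $V$ is rational.

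The hard part, precisely as in Proposition \ref{prop:1dim}, is the assertion that the involution $\iota$ (equivalently the target $\p^2$) is independent of the threefold section $W_x$ chosen to define $\psi$, so that $\pi'$ is well defined and dominant with conic fibres; this must be grounded on $\iota$ being intrinsically determined by $(F,\mathcal O_V(1)_{|F})$, not by the auxiliary section. A secondary point to verify is that the general member of the family of conics is irreducible, since otherwise the family splits into lines and $V$ is a scroll in lines over a rational base, a case which is itself rational; and that the section through the singular points $P_0(t)$, although it runs inside $\operatorname{Sing}(V)$, genuinely trivializes the bundle birationally, which is legitimate in the same sense used in the Fano-case arguments.
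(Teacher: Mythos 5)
Your argument is correct, but it takes a longer route than the paper's. Both proofs start identically, with the projection $\pi\colon V\dasharrow \p^{n-3}$ from the span $\Pi$ of a general surface section $F$; but there the paper stops refining: it simply observes that the generic fibre $W_\xi$ is a general threefold section of type (viii) and is rational \emph{over the field $\mathbb C(\xi)$}, whence $V$ is birational to $\p^{n-3}\times \p^3$ -- this is why the paper calls this case ``actually easier than'' Proposition \ref{prop:1dim}. Your refinement to a conic bundle $\pi'\colon V\dasharrow \p^{n-3}\times\p^2$ with the fibre-singular-points giving a rational point on the generic conic is, in substance, an explicit verification of the rationality-over-$\mathbb C(\xi)$ claim that the paper leaves implicit: the paper's one-liner tacitly uses that the unique singular point of $W_\xi$ (the image of the vertex) is $\mathbb C(\xi)$-rational and that the Brauer--Severi surface underlying the Veronese base is trivial over $\mathbb C(\xi)$ (e.g.\ because $W_\xi\supseteq F$ carries $\mathbb C$-points), while your construction does this work by hand via the conic-through-a-rational-point mechanism of Propositions \ref{prop:rarat} and \ref{prop:rarat2}. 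The two points you flagged as delicate are indeed fine: the involution $\iota$ is intrinsic because here $d=K_F^2=2$, so $\psi$ is the anticanonical (Geiser) double cover $F\to\p^2$, independent of the auxiliary section $W_x$ (cf.\ Remark \ref{rem:more}(a)); and the only small imprecision is that $(t,s)\mapsto P_0(t)$ is not literally a section of $\pi'$, since $\psi$ is undefined at $P_0(t)$ (all conics of $W_t$ pass through it) -- the correct formulation is that $P_0$, being defined over $\mathbb C(t)\subseteq K=\mathbb C(\p^{n-3}\times\p^2)$, is a $K$-point of the generic fibre conic, which is smooth, hence isomorphic to $\p^1_K$; this is exactly the sense in which the paper itself uses base points of congruences lying in $\operatorname{Sing}(X)$ in Proposition \ref{prop:rarat2}, so your appeal to that mechanism is legitimate. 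In short: your proof buys self-containedness over the non-closed field at the cost of extra structure; the paper's buys brevity at the cost of an unstated descent argument.
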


\begin{proof} The argument is similar to (and actually easier than) the one of the proof of Proposition \ref {prop:1dim}. Fix a general surface section $F$ of $V$ spanning a linear space $\Pi$ of dimension $r-n+2$. Consider the projection $\pi: V\dasharrow \p^{n-3}$ from $\Pi$. The fibre of $\pi$ over a general point $\xi\in \p^{n-3}$  is a general threefold section $W_\xi$ of $V\subseteq \p^r$. The threefold $W_\xi$ is  rational over the field $\mathbb C(\xi)$ of rationality of the point $\xi$.  This implies that $V$ is rational.
\end{proof}

\begin{remark}\label{rem:morino} Morin claims in \cite [\S 37]{Mo} that a variety as in Proposition \ref 
{prop:fuji1} is in fact a cone. Again Morin's argument is not convincing and  we could not fix it. So we leave it as a problem to check Morin's assertion. 
\end{remark} 

\subsection{Threefold sections as in case (ix) of Theorem \ref {thm:main1}}

\begin{proposition}\label{prop:veroint} Let $V\subseteq \p^r$ be a variety as above of dimension $n\geq 4$ whose general threefold section is as in case (ix) of Theorem \ref {thm:main1}. Then $r=n+4$ and $V$ is the complete intersection of a cone with vertex a linear space $\Pi$ of dimension $n-2$ over a Veronese surface of degree 4 in $\p^5$, with a quadric not containing  $\Pi$.  The variety $V$ is rational. 
\end{proposition}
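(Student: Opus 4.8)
The plan is to first settle the numerics and then reconstruct the cone geometrically. Since $V$ is not a cone, its general hyperplane section is again non--degenerate, so both codimension and degree are preserved under general hyperplane sections; iterating down to the threefold section $W$, which spans $\p^7$ and has degree $8$, this gives $r=n+4$ and $\deg(V)=8$. Thus the goal is to exhibit a cone $T$ with vertex a linear space $\Pi$ of dimension $n-2$ over a Veronese surface $\mathcal V\subset \p^5$ of degree $4$, together with a quadric $Q$ not containing $\Pi$, such that $V=T\cap Q$.

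Next I would fix a general surface section $F$ of $V$, spanning a $\Pi_F\cong \p^6$. By Remark \ref{rem:more}(a), $F$ is the complete intersection, inside $\Pi_F$, of a three--dimensional cone $V_S$ with vertex a point $v$ over a Veronese surface $\mathcal V$ of degree $4$, with a quadric; moreover the generating lines of $V_S$ cut out on $F$ the involution $\iota$, so $V_S$, $v$ and $\mathcal V$ are intrinsically attached to $F$. Projecting $V$ from $\Pi_F$ gives a dominant map $\pi\colon V\dasharrow \p^{n-3}$ whose fibre over a general $\xi$ is a threefold section $W_\xi\supseteq F$, and the $W_\xi$ cover $V$. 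Each $W_\xi$ is as in case (ix), hence $W_\xi=T_\xi\cap Q_\xi$ with $T_\xi$ the four--dimensional cone over a Veronese surface with vertex a line $\ell_\xi$; since the trace of $T_\xi$ on $\Pi_F$ is the hyperplane section $V_S$ of $T_\xi$, we get $\ell_\xi\cap \Pi_F=v$, and each generating plane of $T_\xi$ meets $\Pi_F$ along a generating line of $V_S$. In particular the congruence of conics on $W_\xi$ restricts on $F$ to the pairs of the same involution $\iota$, which is what glues the various $T_\xi$ to a common Veronese base $\mathcal V$.

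I would then set $\Pi:=\langle \ell_\xi : \xi\in\p^{n-3}\rangle$, a linear space containing $v$, and for $p\in\mathcal V$ put $\Lambda_p:=\langle \Pi,p\rangle$. Since every generating plane of $T_\xi$ lies in some $\Lambda_p$, we get $V\subseteq \bigcup_\xi T_\xi\subseteq T:=\bigcup_{p\in\mathcal V}\Lambda_p$, which is exactly the cone over $\mathcal V$ with vertex $\Pi$, of degree $4$ and dimension $\dim(\Pi)+3$. As the $\ell_\xi$ form an $(n-3)$--dimensional family of lines through $v$, one has $n-3\le \dim(\Pi)\le n-2$; the value $n-3$ is excluded because it would force $\dim(T)=n=\dim(V)$ and hence $V=T$, contradicting $\deg(V)=8\neq 4=\deg(T)$. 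Therefore $\dim(\Pi)=n-2$ and $\dim(T)=n+1$, so $V$ is a divisor on $T$ with $\deg(V)=8=2\deg(T)$, i.e.\ $V=T\cap Q$ for a quadric $Q$. The exact sequences, together with the linear normality of $V$, give $h^0(\I_V(2))=h^0(\I_W(2))=7$, which produces $Q$ as the unique member of $|\I_V(2)|$ not containing $T$ (the other six quadrics being the $2\times 2$ minors defining $T$); and $Q\not\supseteq \Pi$, for otherwise $V$ would be a cone with vertex $\Pi$, against our assumption.

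Finally, $V$ is rational: the decomposition $V=\bigcup_{p\in\mathcal V}(\Lambda_p\cap Q)$ exhibits $V$ as a family of $(n-2)$--dimensional quadrics over $\mathcal V\cong \p^2$, all of them containing the fixed quadric $\Sigma:=\Pi\cap Q$; projecting each fibre from a fixed general point of $\Sigma$ makes $V$ birational to a $\p^{n-2}$--bundle over $\p^2$, hence to $\p^2\times\p^{n-2}$, as in the threefold case of Remark \ref{rem:more}(c). The step I expect to be most delicate is the gluing in the second and third paragraphs: proving that the Veronese cones $T_\xi$ of the individual threefold sections are cut out by the \emph{same} determinantal (Veronese) structure, equivalently that they induce on the fixed surface section $F$ one and the same involution $\iota$, and then upgrading the parameter count for the vertex lines $\ell_\xi$ into the sharp equality $\dim(\Pi)=n-2$.
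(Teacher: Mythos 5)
Your strategy --- sweeping $V$ by the threefold sections $W_\xi$ through a fixed surface section $F$ and gluing their Veronese cones $T_\xi$ into a single cone $T=J(\Pi,{\mathcal V})$ --- is genuinely different from the paper's, but it has a real gap at exactly the step you flag: the bound $\dim(\Pi)\le n-2$. The parameter count controls only the union of the vertex lines, $\dim\bigl(\bigcup_\xi \ell_\xi\bigr)\le n-2$; it says nothing about the linear \emph{span} $\Pi$ of that union, which for an $(n-3)$--dimensional cone of lines through $v$ can be much larger (already for $n=4$, a one--parameter family of lines through a point need not be coplanar). Worse, your trichotomy points the wrong way: since $V$ is non--degenerate and $V\subseteq T$, the cone $T$ spans $\p^{n+4}$, so $\dim\langle \Pi\cup\p^5\rangle=n+4$, where $\p^5=\langle{\mathcal V}\rangle\subset\Pi_F$, and Grassmann's formula gives $\dim(\Pi)=n-1+\dim(\Pi\cap\p^5)$ (with $\dim\emptyset=-1$). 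Hence $\dim(\Pi)\ge n-2$ automatically, the case $\dim(\Pi)=n-3$ that you exclude cannot occur anyway, and what actually has to be ruled out is $\Pi\cap\p^5\ne\emptyset$, for which the proposal offers no argument. Without that, $T$ need not have degree $4$ nor dimension $n+1$, $V$ need not be a divisor on $T$, and the complete--intersection statement, the count $h^0({\mathcal I}_V(2))=7$ identifying $Q$, and your rationality argument (all of which presuppose $V=T\cap Q$) collapse together.

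The paper avoids the gluing problem altogether. By Remark \ref{rem:more}(c) each threefold section of type (ix) has two quadruple points (on $Q\cap\ell$), so $V$ carries an $(n-3)$--dimensional, degree--$2$ locus of points of multiplicity $4$ --- your $\Sigma=\Pi\cap Q$, but obtained a priori, before any structure theorem. Projecting $V$ from one such point $x$ is birational (since $V$ is not a cone) onto a variety of degree $8-4=4$ in $\p^{n+3}$, i.e.\ of minimal degree, whence rationality at once; moreover the general threefold section through $x$ projects to the cone with vertex a point over the Veronese surface (Remark \ref{rem:more}(c) again), so the image is the cone with vertex a $\p^{n-3}$ over the Veronese surface, and coning back over $x$ places $V$ on a cone with vertex a $\p^{n-2}$ over the Veronese, the residual quadric then coming from the degree. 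Note that your concluding rationality step --- projecting the fibre quadrics from a general point of $\Sigma$ --- is in substance this very projection from a multiplicity--$4$ point; run unconditionally and first, it yields both rationality and the linearity of the vertex locus, essentially reproducing the paper's proof. As written, however, your argument is incomplete at its crux.
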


\begin{proof} The  variety $V$ has degree 8, it sits in $\p^{n+4}$ and its general threefold section $W$ has two points of multiplicity $4$. So $V$ has a variety of dimension $n-3$ and degree 2 of points of multiplicity 4. Take one of these points $x$ and project down $V$ from $x$ to  $\p^{n+3}$. Since $V$ is not a cone with vertex $x$, this projection is birational onto its image $V'$ that has dimension $n$ in $\p^{n+3}$ and degree 4 (see Remark \ref {rem:more}(c)). So $V'$ is a variety of minimal degree, hence it is rational, therefore $V$ is rational. 

We claim that $V'$ is a cone with vertex a linear space $\Pi'$ of dimension $n-3$ over the Veronese surface. In fact the general threefold section $W$ of $V$ through $x$ maps, under the aforementioned projection, to the general threefold section of $V'$, and, on the other hand, it maps to a cone with vertex a point over the Veronese surface (see again Remark \ref {rem:more}(c)). This proves that $V$ itself sits on a cone with vertex a linear space of dimension $n-2$ over a Veronese surface, and the assertion follows. \end{proof}

\subsection{Threefold sections as in case (x) of Theorem \ref {thm:main1}}

\begin{proposition}\label{prop:lpic} Let $V\subseteq \p^r$ be a variety as above of dimension $n\geq 4$ whose general threefold section is as in case (x) of Theorem \ref {thm:main1}. Then $V$ is rational.
\end{proposition}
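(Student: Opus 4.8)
The plan is to mimic the fibration arguments of Propositions \ref{prop:1dim} and \ref{prop:fuji1}. Since $V$ is not a cone, I would fix a general surface section $F$ of $V$, a Del Pezzo surface of degree $1$, spanning a linear space $\Pi$ of dimension $r-n+2$, and consider the projection $\pi : V \dasharrow \p^{n-3}$ from $\Pi$. This map is dominant, and its fibre over a general point $\xi$ is the threefold section $W_\xi$ of $V$ cut out by $\langle \Pi, \xi\rangle$; a general such $W_\xi$ is a threefold of type (x), defined over the residue field $\mathbb{C}(\xi)$. Exactly as in the proof of Proposition \ref{prop:fuji1}, it then suffices to prove that $W_\xi$ is rational \emph{over} $\mathbb{C}(\xi)$: this would make $V$ birational to $\p^{n-3}\times \p^3$, hence rational.

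The geometric rationality of a single type-(x) threefold is established in Remark \ref{rem:jop}(a), and the key point is to reuse that description over the non-closed field $\mathbb{C}(\xi)$. Concretely, $W_\xi$ carries a congruence $\mathcal{C}_\xi$ of conics whose members are sent, under the map described there, to the lines of a three-dimensional cone $V_0^\xi$ with vertex a point $p_\xi$ over a quartic rational normal scroll $\Sigma^\xi$ with a line directrix; equivalently $W_\xi$ is birational to $V_0^\xi$. I would argue that all the ingredients of this picture are canonically attached to $W_\xi$, hence defined over $\mathbb{C}(\xi)$: the relevant adjoint linear system is defined over $\mathbb{C}(\xi)$, so are the map and its image $V_0^\xi$; the vertex $p_\xi$ is the unique point through which every conic of $\mathcal{C}_\xi$ passes (see the proof of Proposition \ref{prop:rarat2}); and, the scroll $\Sigma^\xi$ being unbalanced of degree $4$, its line directrix is the unique minimal directrix. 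Being canonically determined, $p_\xi$ and the directrix are therefore $\mathbb{C}(\xi)$-rational.

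With this in hand I would assemble the rationality over $\mathbb{C}(\xi)$ in two equivalent ways. Directly: the scroll $\Sigma^\xi$ is a $\p^1$-bundle over its directrix $\cong \p^1_{\mathbb{C}(\xi)}$, so it is rational over $\mathbb{C}(\xi)$, and the cone $V_0^\xi$ is birational over $\mathbb{C}(\xi)$ to $\Sigma^\xi\times \p^1$, hence rational over $\mathbb{C}(\xi)$; thus so is $W_\xi$. Intrinsically: the congruence $\mathcal{C}_\xi$ endows $W_\xi$ with a conic-bundle structure over the rational surface $\Sigma^\xi$, and the common point $p_\xi$ furnishes a $\mathbb{C}(\xi)$-rational section, so a conic bundle with a section over a rational base is rational over its field of definition. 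Either route gives that $W_\xi$ is rational over $\mathbb{C}(\xi)$, and the rationality of $V$ follows.

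The main obstacle is precisely this arithmetic descent: the difficulty is not the rationality of a type-(x) threefold over $\mathbb{C}$, which is already known, but making the construction work over $\mathbb{C}(\xi)$. The two delicate steps are the $\mathbb{C}(\xi)$-rationality of the base of the conic bundle (handled by using that the scroll is unbalanced, so that its directrix, and with it a ruling, is canonical) and the existence of a $\mathbb{C}(\xi)$-rational section (handled by using that the vertex $p_\xi$ is the unique base point common to all conics of $\mathcal{C}_\xi$, hence canonically defined). Once these are secured, the conclusion that $V$ is rational is formal.
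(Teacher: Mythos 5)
Your proposal is correct and follows essentially the same route as the paper: the paper's proof of this proposition simply says it goes exactly as that of Proposition \ref{prop:fuji1}, i.e., project $V$ from the span $\Pi$ of a general surface section onto $\p^{n-3}$ and use that the generic fibre, a type-(x) threefold, is rational over $\mathbb{C}(\xi)$. Your only addition is to make explicit the arithmetic descent (canonicity of the adjoint system, of the vertex $p_\xi$, and of the line directrix of the unbalanced scroll) that the paper leaves implicit in the phrase ``rational over the field $\mathbb{C}(\xi)$'', and that verification is sound.
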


\begin{proof} The proof goes exactly as the one of Proposition \ref {prop:fuji1}, so we leave it to the reader. 
\end{proof}

\subsection{Threefold sections as in case (xi) in Theorem \ref {thm:main1}}

\begin{proposition}\label{prop:lpic} Let $V\subseteq \p^r$ be a variety as above of dimension $n\geq 4$ whose general threefold section is as in case (xi) in Theorem \ref {thm:main1}. Then $r=n+4$ and $V$ is cut out on a cone with vertex a linear space of dimension $n-2$ over a rational normal scroll surface of degree 4 in $\p^5$ with a line directrix, by  a cubic hypersurface containing three $n$--dimensional linear spaces  generators of the cone.
The variety $V$ is rational. 
\end{proposition}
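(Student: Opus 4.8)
The plan is to mimic the strategy of Proposition~\ref{prop:veroint}: first fix the numerical invariants and the rationality by a projection, then reconstruct the ambient cone out of the cone structures carried by the threefold sections. A general threefold section $W$ of $V$ is cut out by $n-3$ general hyperplanes, hence spans a $\p^{r-n+3}$; since a general linear section of a non--degenerate variety is non--degenerate and the threefold of case (xi) spans $\p^7$, this forces $r-n+3=7$, i.e.\ $r=n+4$, and $\deg V=\deg W=9$. For the rationality I would argue as in Proposition~\ref{prop:fuji1} (and as in the case (x) analysis above): fix a general surface section $F$, spanning a $\p^6=\langle F\rangle$, and project $V$ from $\langle F\rangle$ onto $\p^{n-3}$; the fibre through a general point is a threefold section $W_\xi$ containing $F$, hence a variety of case (xi), which is rational over the field $\C(\xi)$ by the section--producing argument of Proposition~\ref{prop:rarat2}. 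Thus $V$ is rational.

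By case (xi) (see Remark~\ref{rem:jop}(b)) each threefold section $W$ carries a congruence of conics $C$ (with $F\cdot C=2$) whose planes sweep out a $4$--fold cone $T_W$ of degree $4$, with vertex a line $\ell_W$, over a rational normal scroll $\Sigma\subset\p^5$ of degree $4$ with a line directrix, and $W$ is cut on $T_W$ by a cubic residually to three $\p^3$--generators $\langle\ell_W,m\rangle$. These congruences glue to an $(n-1)$--dimensional congruence $\mathcal C$ of conics on $V$, and I let $T$ be the closure of the union of their planes, an irreducible $(n+1)$--fold containing $V$, with $T\cap L=T_{V\cap L}$ for $L$ a general linear space of codimension $n-3$. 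To locate the vertex, set $\Pi$ to be the closure of the union of the vertex lines $\ell_W$. Intersecting with such an $L$ gives $\Pi\cap L=\ell_{V\cap L}$, a single line; as this is a general one--dimensional linear section of $\Pi$, it has degree $1$, so $\Pi$ has degree $1$ and is a $\p^{n-2}$ --- the same device used in Propositions~\ref{prop:case3} and \ref{prop:veroint}.

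Projecting $V$ from $\Pi$ onto $\p^5$ has as image a surface restricting on each threefold section to $\Sigma$, hence a single degree $4$ scroll, and $T$ is precisely the cone over $\Sigma$ with vertex $\Pi$, with generators $\langle\Pi,m\rangle\cong\p^n$ for $m$ a ruling of $\Sigma$, so $\dim T=n+1$ and $\deg T=4$. Finally the cubics cutting the threefold sections propagate to a single cubic hypersurface cutting $V$ on $T$; since $\deg(T\cap\{\text{cubic}\})=3\cdot4=12=9+3=\deg V+3$, the divisor residual to $V$ in this cubic section is made of three $n$--dimensional generators of $T$, yielding exactly the asserted description.

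The delicate point is the globalization in the last two steps: proving that the separately constructed $4$--fold cones $T_W$ are genuinely the codimension-$(n-3)$ sections of one $(n+1)$--fold cone $T$. One must show that the vertex lines $\ell_W$ sweep out a linear space of dimension exactly $n-2$ --- in particular the upper bound, i.e.\ that $\Pi\cap L$ contains nothing beyond $\ell_{V\cap L}$, requires care --- and that the base scroll $\Sigma$ does not move with $W$. This is precisely the kind of step where the classical arguments of Morin are incomplete.
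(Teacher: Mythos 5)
Your reconstruction of the ambient cone is exactly where the argument breaks, and you concede this yourself: you never prove that the vertex lines $\ell_W$ sweep out a linear space of dimension $n-2$ (the inclusion $\Pi\cap L\supseteq \ell_{V\cap L}$ is clear, but the union of the $\ell_W$ over the whole family of threefold sections meeting a general $L$ could a priori have dimension larger than $n-2$, so ``a general one--dimensional linear section of $\Pi$ is a line'' is unjustified), nor that the base scroll $\Sigma$ is independent of $W$. A proposal whose central step is admitted to be open has a genuine gap, and the paper closes it by a device your plan misses entirely: take $Z$ to be the intersection of \emph{all quadrics} through $V$. Since $V$ is linearly normal, the restriction map $H^0(\p^{n+4},\I_{V}(2))\longrightarrow H^0(\p^{n+3},\I_{W}(2))$ is an isomorphism, so the general hyperplane section of $Z$ is the intersection of the quadrics through the corresponding section of $V$; by induction on $n$, starting from the known threefold picture, this identifies $Z$ as the cone with vertex a $\p^{n-2}$ over the scroll, with no gluing of the separate cones $T_W$ ever needed. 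The position of $V$ inside $Z$ is then fixed by computing in ${\rm Cl}(Z)$, generated by the hyperplane class $H$ and the class $\Pi$ of a linear generator, that $V=3H-3\Pi$ (restrict to a hyperplane section). Note also that your closing degree count $3\cdot 4=9+3$ only shows the residual divisor has degree $3$, not that it consists of three generators; that too needs the class--group computation (or the remark that a divisor on $T$ whose general codimension--$(n-3)$ linear section is a single $\p^3$--generator has degree one and dimension $n$, hence is a linear generator).

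The rationality step has a second, independent gap. You reduce to rationality of the generic fibre $W_\xi$ over $K=\C(\xi)$ and invoke the section--producing argument of Proposition \ref{prop:rarat2}; but that argument produces a section of the conic bundle passing through a \emph{singular point} of the threefold, and for a case (xi) threefold the distinguished singular points (the multiplicity--$5$ points cut on the vertex line by the double locus of the cubic) form a length--two scheme, which over $K$ may be a conjugate pair with no $K$--rational point; moreover the base of the conic bundle is a quadric cone in $\p^3$, rational over $K$ only if its base conic has a $K$--point, which is guaranteed by Tsen's theorem only when $K$ has transcendence degree one over $\C$, i.e.\ only for $n=4$. So your fibration argument is incomplete for every $n\geq 5$, and repairing it would essentially require the global vertex $\Pi$ --- that is, the very globalization you left open. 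The paper avoids function fields altogether: once $Z$ and the cubic $\Phi$ are in hand, a direct computation shows $\Phi$ is double along a quadric $Q$ contained in the vertex $\Pi\cong\p^{n-2}$, the points of $Q$ have multiplicity $5$ on $V$ (multiplicity $2$ on $\Phi$ and $4$ on $Z$, so $8$ on the intersection, minus the three generators), and projection of $V$ from any such point is birational onto a variety of minimal degree $4$ in $\p^{n+3}$, a cone over the scroll, whence rationality over $\C$.
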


\begin{proof} To prove the first assertion, we proceed by induction on $n$ for $n\geq 4$. First we  work out the case $n=4$. Let $W$ be a general hyperplane section of $V$. Since $V$ is non--degenerate and linearly normal, the natural restriction map
$$
H^0(\p^8, \mathcal I_{V,\p^8}(2))\longrightarrow H^0(\p^7, \mathcal I_{W,\p^7}(2))
$$
is an isomorphism. Let $Z$ be the intersection of all quadrics in $H^0(\p^8, \mathcal I_{V,\p^8}(2))$. By the above considerations, the general hyperplane section $Z'$ of $Z$ equals the intersection of all quadrics in $H^0(\p^7, \mathcal I_{W,\p^7}(2))$, and this is a cone with vertex a line over a rational normal scroll surface of degree 4 in $\p^5$ with a line directrix. This proves that $Z$ is a cone with vertex a plane over a rational normal scroll surface of degree 4 in $\p^5$ with a line directrix. The group ${\rm Cl}(Z)$ is generated by $H$, the hyperplane section of $Z$, and by $\Pi$, the class of a 4--dimensional linear space generator of the cone $Z$. We can write $V=aH-b\Pi$ in ${\rm Cl}(Z)$. By restricting to a hyperplane section, we see that $a=b=3$. Then the assertion follows.

To prove the induction step one proceeds exactly in the same way as above, so we can skip the details and leave them to the reader.

Finally we have to prove that $V$ is rational. Let $Z$ be the degree 4 rational normal cone of dimension $n+1$ containing $V$, and let $\Phi$ be the cubic hypersurface cutting out $V$ on $Z$ off three $n$--dimensional generators $\Pi_1,\Pi_2,\Pi_3$ containing $\Pi$. The span of $\Pi_1,\Pi_2,\Pi_3$ is easily seen to be a linear space $\Pi'$ of dimension $n+3$, i.e., a hyperplane, containing $\Pi$. This shows that $\Pi'$ is tangent to $\Phi$ all along $\Pi$. A direct computation shows that $\Phi$ has a locus $Q$ of double points contained in $\Pi$, that is scheme theoretically a quadric in $\Pi$.  Let us fix $x\in Q$. The $n$--dimensional generators of the scroll $Z$ cut out on $V$ cubic hypersurfaces of dimension $n-1$ with a double point in $x$ (cut out by $\Phi$), that project down from $x$ birationally to linear spaces of dimension $n-1$. So $V$ projects from $x$ birationally to the scroll $Z'$ swept out by these linear spaces of dimension $n-1$, that all pass through the projection of $\Pi$ that is a linear space of dimension $n-3$. So $Z'$ is a cone over a rational normal scroll surface of degree 4 in $\p^5$ with a line directrix. This shows that $V$ is rational.  

There is a slightly different way to look at the above argument. It is in fact immediate to check that the points of $Q$ have multiplicity 5 for $V$ (they have multiplicity 2 for $\Phi$ and 4 for $Z$, so the multiplicity for the intersection of $\Phi$ with $Z$ is 8, but we have to subtract three generators). So projecting $V$ from such a point, the projection is birational and one gets a variety of dimension $n$ of minimal degree $4$ in $\p^{n+3}$, that is exactly the cone $Z'$ considered above. 
\end{proof}

\section{Proof of Theorem \ref {thm:main4}}\label{sec:04}

We can now give the:
\begin{proof}[Proof of Theorem \ref {thm:main4}]  Suppose first that the general threefold section $W$ of $V$ falls in case (a) of Theorem \ref {thm:main3}. So $W$ contains a 3--dimensional family of lines. Then an easy count of parameters shows that $V$ has a family $\mathcal F$ of lines of dimension $2n-3$. If $x\in V$ is a general point, there is a family $\mathcal F_x$ of dimension $n-2$ of lines in $\mathcal F$ passing through $x$, so that they fill up a variety  $\Pi_x$ of dimension $n-1$. The intersection of $\Pi_x$ with a general linear space of dimension $r-n+3$ passing through $x$ is a plane, so $\Pi_x$ is a linear space of dimension $n-1$. Hence $V$ is a scroll in linear spaces of dimension $n-1$ over a curve $\Gamma$ and therefore $V$ is birational to $\Gamma\times \p^{n-1}$. On the other hand, the general surface section $F$ of $V$ is a scroll in lines over $\Gamma$, so it is birational  to $\Gamma\times \p^1$. It follows that $V$ is birational to $F\times \p^{n-2}$, as wanted.

Suppose next that the general threefold section $W$ of $V$ falls in case (b) or (c)  of Theorem \ref {thm:main3}. Arguing exactly as in the  proof of Proposition \ref {prop:1dim}, one can prove that $V$ is birational to $F\times \p^{n-2}$. We can therefore omit the details, leaving them to the reader.

Finally,  suppose that the general threefold section $W$ of $V$ falls in case (d)  of Theorem \ref {thm:main3}. Arguing exactly as in the proof of Proposition \ref {prop:case3}, one checks that $V$ is a scroll in linear spaces of dimension $n-2$ over a surface, and the assertion follows.
\end{proof}

\end{document}